\def\Xint#1{\mathchoice
{\XXint\displaystyle\textstyle{#1}}%
{\XXint\textstyle\scriptstyle{#1}}%
{\XXint\scriptstyle\scriptscriptstyle{#1}}%
{\XXint\scriptscriptstyle\scriptscriptstyle{#1}}%
\!\int}
\def\XXint#1#2#3{{\setbox0=\hbox{$#1{#2#3}{\int}$ }
\vcenter{\hbox{$#2#3$ }}\kern-.6\wd0}}
\def\dashint{\Xint-}
\newtheorem{theorem}{Theorem}[section]
\newtheorem{lemma}[theorem]{Lemma}
\newtheorem{proposition}[theorem]{Proposition}
\newtheorem{corollary}[theorem]{Corollary}
\newtheorem{assumption}[theorem]{Assumption}
\newtheorem{definition}[theorem]{Definition}
\newtheorem{remark}[theorem]{Remark}
\numberwithin{equation}{section}
\def\cC{\mathcal C}
\def\cF{\mathcal F}
\def\cG{\mathcal G}
\def\cK{\mathcal K}
\def\cL{\mathcal L}
\def\bC{\mathbb C}
\def\bE{\mathbb E}
\def\bN{\mathbb N}
\def\bP{\mathbb P}
\def\bR{\mathbb R}
\def\bS{\mathbb S}
\def\me{\mathsf{e}}
\def\mf{\mathsf{f}}
\def\mE{\mathsf{E}}
\def\mG{\mathsf{G}}
\def\mj{\mathsf{j}}
\def\mv{\mathsf{v}}
\def\mV{\mathsf{V}}
\def\mw{\mathsf{w}}
\def\uno{\mathbbm{1}}
\def\ie{\emph{i.e.\/}}
\def\eg{\emph{e.g.\/}}
\DeclareMathOperator{\rg}{rg}
\title[Random evolution equations]{Random evolution equations:\\ well-posedness, asymptotics, and applications to graphs}
\author{Stefano Bonaccorsi} 
\address{S.\ B.\ Dipartimento di Matematica, Universit\`a di Trento \\ via Sommarive 14, 38123 Povo TN, Italy \\ E-mail: \tt stefano.bonaccorsi@unitn.it}
\author{Francesca Cottini} 
\address{F.\ C.\ Dipartimento di Matematica e Applicazioni, Universit\`a di Milano Bicocca \\ via Roberto Cozzi 55,
	20125 Milano, Italy \\
	E-mail: \tt f.cottini2@campus.unimib.it}
\author{Delio Mugnolo}
\address{D.\ M.\ Lehrgebiet Analysis, Fakultät Mathematik und Informatik\\
	FernUniversität in Hagen, 58084 Hagen, Germany \\ E-mail: \tt delio.mugnolo@fernuni-hagen.de}
\definecolor{vio}{RGB}{146,043,062}
\subjclass[2010]{Primary: 35R60, Secondary: 47D06, 37A50, 60K15}
\keywords{Operator semigroups, Evolution equations in random environments, Discrete Laplacians, Quantum graphs}
\thanks{The first author  gratefully acknowledge support by the Italian MIUR-PRIN 2015 ``Deterministic and stochastic evolution equations'' (Grant No. 2015233N54).
The third author was partially supported by the Deutsche Forschungsgemeinschaft (Grant 397230547). }
\begin{document}

\begin{abstract}
We study diffusion-type equations supported on structures that are randomly varying in time. After settling the issue of well-posedness, we focus on the asymptotic behavior of solutions: our main result gives sufficient conditions for pathwise convergence in norm of the (random) propagator towards a (deterministic) steady state. We apply our findings in two environments with randomly evolving features: ensembles of difference operators on combinatorial graphs, or else of differential operators on metric graphs.
\end{abstract}

\maketitle

\section{Introduction}

Randomly switching dynamical systems stand in between deterministic evolution equations (where the dynamics of the system is prescribed and completely known \textit{a priori}) and stochastic differential equations, where the dynamics
is perturbed by the introduction of noise. 

Such systems are described by a continuous component, which follows a (deterministic) {evolution} driven by an operator $A_j$ which is selected among a class of operators $\cC = \{A_1, \dots, A_n\}$ by a discrete jump process.

These problems are related to a large  -- but somehow disjoint -- literature, which treats piecewise deterministic Markov processes \cite{Bakhtin2012, Cloez2015, Malrieu2015}, switched dynamical systems \cite{Benaim2012}, products of random matrices \cite{Gurvits1995}, random walk in random environment \cite{Zei02, Zei04} with applications in biology \cite{Bressloff2017}, physics \cite{Buceta2002} or finance \cite{Yin2010}, for instance.

In the present paper, we study the asymptotic behavior of a class of random evolution problems that may be relevant in some applications.
Our main result (Theorem~\ref{t1} below) states that the system consisting of a random switching between parabolic evolution equations driven by contractive, self-adjoint, immediately compact semigroups converges towards an orthogonal projector provided the process spends enough time at each state: we refer to Section~\ref{sez:setting} for the theorem's formulation and Section~\ref{sez:proofs} for its proof.
As a motivation to our study, we provide in this section an example concerning the dynamics of the discrete heat equation on a system of random varying graphs.
This example will be further analyzed in Section \ref{sez3}, which is devoted to the study of combinatorial graphs: there we discuss some further examples which relate our results to the existing literature.
Finally, Section \ref{sez5} is devoted to an application of Theorem \ref{t1} to a randomly switching evolution system on {\em metric graphs}.
This section takes advantage of a novel formal definition of metric graphs (\cite{Mug19}) which can be exploited to verify the assumptions of our construction.

\subsection*{A motivating example}
Let $\mG_1, \ldots, \mG_N$ be a family of simple (\ie, with no loops or multiple edges) but not necessarily connected graphs on a fixed set of vertices $\mV$ with cardinality $|\mV|$.
We consider the function space defined as the complex, finite-dimensional Hilbert space $\bC^\mV \equiv \{u: \mV \to \bC\}$.

On every graph $\mG_k$ we introduce the graph Laplacian $\cL_k$ (for a formal definition, see Section \ref{sez3}), which (under our convention on the sign) is negative semi-definite
and whose eigenvalue $\lambda_1 =0$ has multiplicity equal to the number of connected components in $\mG_k$.
The corresponding eigenspace is spanned by the collection of indicator functions on each connected component. 
In particular, if $\mG_k$ is connected, then $\ker \cL_k = \langle \uno \rangle$ is the space of constant functions on the vertices.

It is known that the solution of the Cauchy problem
\[
\left\{
\begin{aligned}
\tfrac{d}{dt} u(t, \mv) &= \cL_k u(t,\mv),\qquad && \mv \in \mV,\ t\ge 0,
\\
u(0,\mv) &= f(\mv), && \mv \in \mV,
\end{aligned}
\right.
\]
can be expressed in the form
\[
u(t,x) = e^{t \cL_k}f(x)
\]
and in the limit for $t \to \infty$, it converges to the projector ${P_k}f$ onto the null space $\ker \cL_k$, where the projection equals the average of $f$ on each connected component of $\mG_k$.

Let us introduce a (random) mechanism of switching the graphs over time.
In other words, fixed a probability space $(\Omega, \cF, \bP)$, we assume that the evolution is lead by an operator $\cL_{X_k}$ (randomly selected from the set $\{ \cL_1, \dots, \cL_N\}$ according to some Markov chain $\{X_k,\ k \ge 0\}$) during the (random) time interval $[T_k,T_{k+1})$
\begin{equation}
\label{e1}
\left\{
\begin{aligned}
\tfrac{du}{dt} (t, \mv) &= \cL_{X_k} u(t, \mv), \qquad && \mv \in \mV,\ t \in [T_k, T_{k+1}), \ k\in\mathbb N,
\\
u(0, \mv) &= f(\mv), \qquad &&\mv \in \mV.
\end{aligned}
\right.
\end{equation}
We can associate with \eqref{e1} the random propagator
\begin{equation}\label{eq:randomprop}
S(t) = e^{(t - T_n)\cL_{X_n}} \prod_{k=0}^{n-1} e^{(T_{k+1} - T_k) \cL_{X_k}}, \qquad t \in [T_n, T_{n+1}),\ n\in \mathbb N,
\end{equation}
which maps each initial data $f\in \bC^\mV$ into the solution $u(t)$ of~\eqref{e1} at time $t$. This settles the issue of well-posedness of~\eqref{e1}. The main question we are going to address in this paper is however the following:
\begin{itemize}
\item[({\bf P})] Does the random propagator $(S(t))_{t\ge 0}$ converge? towards which limit?
\end{itemize}

The asymptotic behavior of a random propagator $(S(t))_{t\ge 0}$ associated with problem \eqref{e1} has not been studied in a general setting. 
Some results are known for finite-dimensional time-discrete dynamical systems, where the random propagator $(S(T_n))_{n\in\mathbb N}$ defined likewise is  a \textit{product of random matrices} (PRM for short):  this theory dates back to the 1960s, see \eg\ Furstenberg \cite{FurKes60}.

Our main result Theorem \ref{t1} requires an analysis of the null spaces of the operators ${\cL_k}$, $k=1, \dots, N$.
Notice that even when $\dim \ker\cL_k$ is constant for all $k$, there is no reason why $K = \bigcap_{j=1}^N \ker\cL_j$ should have the same dimension; describing the orthogonal projector onto $K$ is therefore, in general, no easy task. 
Coming back to our motivating example of graphs, we observe in Section~\ref{sez3} that $K$ can be explicitly described in terms of the null space of a new operator $A$ that is related to the Laplacians on the graphs $\mG_1,\ldots,\mG_n$ but acts on a different class of functions.  
The key point here is that in doing so we can relate the long time behaviors of a  Cauchy problem with random coefficients with that of an associated (deterministic) Cauchy problem supported on a different ``union'' structure { -- a classical construction in graph theory, which we here naturally extend to weighted graphs}. 
We are not going to elaborate on this functorial viewpoint, but content ourselves with discussing in Section~\ref{sez5} a different, more sophisticated setting where  the same principle can be seen in action.

The case of combinatorial graphs is tightly related to the topic of random walk in random environments, see e.g.\ the classical surveys by Zeitouni~\cite{Zei02, Zei04}, which roughly speaking describe the behavior of a random walker who at each step finds herself moving in a new realization of a $d$-dimensional bond-percolation graph. 
 
At the same time, if the evolution of $\cL(t)$ is, in fact, deterministic, then~\eqref{e1} is essentially a non-autonomous evolution equation; well-posedness theory of such problems is a classical topic of operator theory, while some criteria for exponential stability have been recently obtained in~\cite{AreDieKra14} in the context of diffusion on metric graphs: in comparison with ours, the conditions therein are much more restrictive in that each realization of the considered graph is assumed to be connected.

The convergence of piecewise deterministic Markov processes (or random switching system) is discussed, in particular concerning the ergodicity of the Markov process \cite{Bakhtin2012, Cloez2015}. 
The results in \cite{Benaim2014} are concerned with the non-ergodicity of a switching system in the fast jump rate regime and open the path to similar results in \cite{Lawley2018}.

\medskip
{\bf Acknowledgment.} The authors would like to thank Jochen Glück  (Passau) and Marvin Plümer (Hagen) for their help in the proof of Lemma~\ref{lem:a1} and Walter Moretti (Trento) for several useful discussions concerning Lemma \ref{le:s.1}.

\section{Setting of the problem and main results}\label{sez:setting}

In this section we introduce a general setting for abstract random evolution problems: we will successively show that our motivating problem ({\bf P}) is but one special instance of a system that can be described in this way.

To begin with, we construct the random mechanism of switching by means of a semi-Markov process.
These processes have been introduced by Levy \cite{Levy1954} and Smith \cite{Smith1955} in order to overcome the limitation induced by the exponential distribution of the jump-time intervals and developed by Pyke \cite{Pyke1961, Pyke1964}. 
These models are widely used in the literature to model
random evolution problems and, more generally, evolution in random media, see \eg\ Korolyuk \cite{Korolyuk95}. 

Let $(Z(t),\ t \ge 0)$ be a semi-Markov process taking values in a set $E$, which denotes a given set of indices,
defined on a suitable probability space $(\Omega, \cF, \bP)$. 
By definition, this means that there exists a \textit{Markov renewal process} $\{(X_n, \tau_n): n\in\mathbb N\}$,
where $\{X_n\}$ is a jump Markov process with values in $E$ and $\{\tau_n\}$ are time intervals between jumps and,
if we introduce the counting process $N(t) := \max\{n\,:\, T_n \le t\}$, then
$Z(t) = X_{N(t)}$. 
The joint distribution is given by the transition probability function $q(x,y,t)$
\begin{align*}
q(x,y,t) = \bP(X_{n+1} = y, \tau_{n+1} < t \mid X_n = x). 
\end{align*}
By definition, for fixed $t$, $(x,y) \mapsto q(x,y,t)$ is a sub-Markovian transition function, i.e., 
\begin{align*}
q(x,y,t)\ge 0\quad \hbox{and}\quad \sum_{z\in E} q(x,z,t) \le 1\qquad \hbox{for all }x,y\in E \hbox{ and all }t\ge 0.
\end{align*}

The non-negative random variables $\tau_n$ define the time intervals between jumps, while the Markov renewal times $\{ T_n,\ n \in \bN\}$ defined by
\begin{align*}
T_0 = 0,\qquad T_n = \sum_{k=1}^n \tau_k, \quad n\in \mathbb N
\end{align*} 
are the {\em regeneration times}.

For simplicity, in the sequel we assume that the components $X_n$ and $\tau_n$ are conditionally independent; therefore, the transition probability function can be represented in the form
\begin{align*}
q(x,y,t) = \pi(x,y) G_x(t),
\end{align*}
where $\big( \pi(x,y) \big)$ is a Markov transition matrix. 

Clearly, Markov chains and Markov processes with discrete state space are 
examples of semi-Markov processes (the first is associated with $\tau_n \equiv 1$, the second with independent, exponentially distributed $\tau_n$).  Our standing probabilistic assumptions are summarized in the following.

\begin{assumption}\label{hpZ}
$Z = (Z(t))_{t \ge 0}$ is a semi-Markov process based on a Markov renewal process $\{ (X_n,\tau_n):n\in\mathbb N\}$ over the state space $E\times [0,\infty)$ such that
\begin{enumerate}
\item the Markov transition matrix $\big( \pi(x,y) \big)$ defines an irreducible Markov chain with finite state space $E = \{1, \dots, N\}$;
\item the inter-arrival times $\tau_n$ are either constant, or the distribution functions $(G_x(t),\ t \ge 0)$, for every $x \in E$, has a finite continuous density function $g_x(t) > 0$ for a.e. $t > 0$; and
\item the inter-arrival times $\tau_n$ have finite expected value $\bE^x[\tau_n] = \mu_x > 0$.
\end{enumerate}
\end{assumption}

\begin{remark}\label{rem:itfllow}
Since the embedded Markov process $X$ is irreducible, there exists a unique invariant distribution $\rho = (\rho_1, \dots, \rho_N)$ for it.
\\
Moreover, this implies that 
the total time spent in any state by the semi-Markov process $Z$ is infinite almost surely, and the fraction of time spent in $x \in E = \{1, \dots, N\}$
satisfies
\begin{align}\label{e:tot-time}
\Theta_x \coloneqq \lim_{t \to \infty} \frac1t \int_0^t \uno_{\{Z(s) = x\}} \, {\rm d}s = \frac{\rho_x \mu_x}{\sum_{j \in E} \rho_j \mu_j}.
\end{align}
\end{remark}

Once our random environment has been described, we can  introduce the evolution problem.
\\
We consider an ensemble $\cK = \{ A_1, \dots, A_{N} \}$ of linear operators on a normed space $H$; clearly, the cardinality of $\cK$ is the same as that of the state space $E$ of the Markov chain.
\\
We can now introduce the abstract random Cauchy problem
\begin{equation}
\label{CPS}
\left\{
\begin{aligned}
\tfrac{d}{dt} u(t) &= A(Z(t)) u(t),\\
u(0) &= f,
\end{aligned}
\right.
\end{equation} 
where $A(Z(t)) = A_{X_n}$ for $t \in [T_n, T_{n+1})$. 
The solution of \eqref{CPS} is a random process, where the stochasticity enters the picture through the semi-Markov process $(Z(t))_{t\ge 0}$. 
Notice that \eqref{e1} is a special case of \eqref{CPS} on the finite-dimensional space $H = \bC^d$.

In the literature, (deterministic) non-autonomous Cauchy problems of the form \eqref{CPS} are are a classical topic with a well-developed theory, see e.g.~\cite{Tan79,AcqTer87,DauLio92}. 
In this paper, we shall use the following natural modification of the notion of solution.

\begin{definition}\label{def:soldeterm}
Assume that there exists a finite partition $0=T_0<T_1<\ldots<T_N=:T$ of $[0,T]$ such that $A(Z(t)) = A_{X_n}$ for all $t\in [T_{n-1}, T_n)$, $n=1,\ldots,N$.
We say that a càglàd function $u:[0,T]\to H$ is a {\em solution of \eqref{CPS} on $[0,T]$} if 
\begin{enumerate}
\item $u\in C^1((T_{n-1}, T_n);H)$ for all $n=1,\ldots,N$;
\item $u(t)\in D(A_{X_n})$ for all $t\in (T_{n-1}, T_n)$ and $n=1,\ldots,N$;
\item $u'(t) = A_{X_n} u(t) $ for all $t \in (T_{n-1}, T_n)$ and $n=1,\ldots,N$.
\end{enumerate}
\end{definition}

Sufficient conditions for well-posedness of~\eqref{CPS} are given by the following.

\begin{assumption}\label{hpA}
$H$ is a separable, complex Hilbert space and for every $j \in \{1, \dots, N\}$ the closed, densely defined operator $A_j : D(A_j) \subset H \to H$  generates a strongly-continuous, analytic semigroup of contractions and  it has no spectral values on $i\mathbb R$, with the possible exception of 0.
\end{assumption}

\begin{definition}\label{def:solstoch}
A {\em solution} $u$ for \eqref{CPS} is a stochastic process $\{u(t), t \ge 0\}$
\begin{align*}
\cF_t := \sigma\{ \{\tau_n \le t\} \cap \{(x_0, \dots, x_n) \in B\}, \; n \in \bN,\ B \in E^{n+1}\}.
\end{align*}
and whose trajectories solve the identity $u'(t) = A(Z(t))u(t)$ almost surely in the 
sense of Definition~\ref{def:soldeterm}.
\end{definition}

Existence and uniqueness of the solution in the sense of previous definition is a consequence of the well-posedness of the Cauchy problem driven by the operator $A_{X_n}$ on the time interval $(T_{n-1}, T_n)$.

\begin{theorem}\label{thm:wellp}
Under the Assumptions \ref{hpZ} and \ref{hpA}, given $f\in H$, \eqref{CPS} has a unique solution $u$, which can be expressed
as $u(t) = S(t)f$ in terms of the {\em random propagator}  $(S(t))_{t\ge 0}\subset {\mathcal L}(H)$ defined by
\begin{equation}
\label{e2}
S(t) := e^{(t-T_n)A_{X_n}} \prod_{k=0}^{n-1} e^{(T_{k+1} - T_k)A_{X_k}}, \qquad t \in [T_n, T_{n+1}),\ n\in\mathbb N.
\end{equation}
In particular, $u$ has continuous sample paths.
\end{theorem}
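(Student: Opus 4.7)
The plan is to reduce everything to the autonomous theory of analytic contraction semigroups on each inter-arrival interval and then glue the pieces by continuity. I fix $\omega\in\Omega$ in a set of full measure on which the jump times do not accumulate: since $E$ is finite, the embedded chain $(X_n)$ is irreducible, and the mean holding times $\mu_x$ are finite and positive, the strong law yields $T_n(\omega)\to\infty$ almost surely, so $\{T_n(\omega)\}$ is a locally finite partition of $[0,\infty)$. On this event the operator $A(Z(t,\omega))$ equals a single generator $A_{X_n(\omega)}$ throughout $(T_n,T_{n+1})$.

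I then define the candidate solution recursively by $u(0):=f$,
\[
u(T_{n+1}):=e^{(T_{n+1}-T_n)A_{X_n}}\,u(T_n),\qquad u(t):=e^{(t-T_n)A_{X_n}}\,u(T_n)\ \text{for}\ t\in[T_n,T_{n+1}),
\]
which unfolds into the formula \eqref{e2}. Contractivity of each $e^{tA_j}$ guarantees that the product of random bounded operators on the right-hand side of \eqref{e2} is well defined, with operator norm $\le 1$, so $S(t)f$ is unambiguously an element of $H$ for every $t\ge 0$. The key point — and what I would highlight as the main obstacle — is that the successive initial data $u(T_n)$ need not belong to any particular $D(A_{X_n})$. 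This is precisely where Assumption~\ref{hpA} is used: by analyticity, $e^{sA_{X_n}}g\in D(A_{X_n})$ for every $g\in H$ and every $s>0$, with $\frac{d}{ds}e^{sA_{X_n}}g=A_{X_n}e^{sA_{X_n}}g$. Hence the conditions (ii)--(iii) of Definition~\ref{def:soldeterm} are verified on each open interval $(T_n,T_{n+1})$.

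Continuity of sample paths is immediate from the strong continuity of each semigroup: on every piece, $t\mapsto e^{(t-T_n)A_{X_n}}u(T_n)$ is continuous in $t$, and the limit from the right at $T_n$ as well as the limit from the left at $T_{n+1}$ both agree with the recursive prescription, so $u$ is continuous on $[0,\infty)$ (and in particular càglàd as required). For uniqueness, if $v$ is any other solution in the sense of Definition~\ref{def:soldeterm}, then on each open interval $(T_{n-1},T_n)$ the restriction of $v$ solves the autonomous Cauchy problem associated with $A_{X_{n-1}}$ with left limit $v(T_{n-1}^+)$; uniqueness for analytic-semigroup Cauchy problems forces $v(t)=e^{(t-T_{n-1})A_{X_{n-1}}}v(T_{n-1})$ there, and induction on $n$ combined with left continuity at the jump times identifies $v$ with $u$.

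The last point is to check the measurability required by Definition~\ref{def:solstoch}. Since $T_n$ and $X_n$ are $\cF_{T_n}$-measurable by construction of the filtration, and $(t,g)\mapsto e^{tA_j}g$ is jointly continuous for each fixed $j\in\{1,\dots,N\}$, the map $\omega\mapsto S(t,\omega)f$ is a finite composition of measurable mappings on $\{N(t)=n\}$ for each $n$, and the union over $n$ is exhaustive by the non-explosion property established in the first step. This yields $\cF_t$-adaptedness, completes the verification that $u(t)=S(t)f$ is a solution, and thereby establishes the theorem.
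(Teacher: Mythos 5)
Your argument is correct and follows exactly the route the paper takes: the paper disposes of Theorem~\ref{thm:wellp} with the single remark that existence and uniqueness follow from the well-posedness of the autonomous problem driven by $A_{X_n}$ on each interval $(T_n,T_{n+1})$, and your proposal simply fills in the details (non-accumulation of the $T_n$, the smoothing effect of analyticity so that $u(T_n)$ need not lie in $D(A_{X_n})$, gluing by strong continuity, piecewise uniqueness, and adaptedness). The only point worth a passing remark is that Definition~\ref{def:soldeterm} asks only for a c\`agl\`ad function, which a priori leaves the right limit $v(T_{n-1}^+)$ unconstrained; your uniqueness step tacitly uses $v(T_{n-1}^+)=v(T_{n-1})$, which is clearly the intended reading since the theorem asserts continuity of sample paths.
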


After establishing well-posedness of our abstract random Cauchy problem, we are interested in studying the long-time behavior of its solutions. To this purpose, we are going to impose the following.

\begin{assumption}\label{hpc}
$A_j$ has compact resolvent for every $j \in \{1, \dots, N\}$.
\end{assumption}

It follows from the Assumptions~\ref{hpA} and~\ref{hpc} that each $A_j$ has finite-dimensional null space, hence \textit{a fortiori} 
\[
K := \bigcap_{j=1}^N \ker A_j
\]
is finite-dimensional, too. If $k:=\dim K>0$, then we denote by $\{e_1,\ldots,e_k\}$ an orthonormal basis of $K$. 

We shall throughout denote by $P_K$ the orthogonal projector onto $K$ and $P_j$ the projector onto $\ker A_j$.
In general, for a projector $P$, its orthogonal operator is $P^\perp \coloneqq I - P$.
For the sake of consistency of notation, we use the same notation also in the case $K = \{0\}$.

\begin{remark}\label{rem:wlog}
In particular, it holds that  $A_j e_i=0$, for all $j=1,\ldots, N$ and all $i=1,\ldots,k$. 
Since the range of ${P_K}$ is spanned by null vectors of $A_j$ for each $j=1,\ldots,N$, ${P_K}$ commutes with each $A_j$, each semigroup operator $e^{t A_j}$, and each spectral projector ${P}_j$ onto $\ker A_j$, $j=1,\ldots,N$, $t\ge 0$.
\end{remark}

\begin{remark}\label{rem:8bis}
Let $A$ be an operator which satisfies our Assumptions \ref{hpA} and \ref{hpc}. 
Notice that they require $A$ to be dissipative and, thanks to Assumption \ref{hpc}, the spectrum of $A$ is discrete. 
By~\cite[Cor. IV.3.12 and Cor.~V.2.15]{EngNag00} there exists a spectral decomposition $H = H_0 \oplus H_d$ where $H_0 = \ker(A)$ and $H_d = H_0^\perp$ and the restriction of $A$ to $H_d$ generates an analytic contraction semigroup with strictly negative growth bound
$s_d(A) = \sup\{ \Re(\lambda) \,:\, \lambda \in \sigma(A) \setminus 0\} < 0$.
\end{remark}

In order to examine the long time behavior of the solution, we introduce a notion of convergence in the almost sure sense.

\begin{definition}\label{asconvergence}
We say that a random propagator $(S(t))_{t\ge 0}\subset {\mathcal L}(H)$ 
\emph{converges in norm $\bP$-almost surely towards a deterministic operator $M\in \mathcal L(H)$} if
\begin{align*}
\bP\left(\|\cdot\|-\lim_{t \to \infty} S(t) = M\right) = 1.
\end{align*}
\end{definition} 

Our main result can be expressed as follows.

\begin{theorem}\label{t1}
Under the Assumptions \ref{hpZ} , \ref{hpA}, and~\ref{hpc} the random propagator $(S(t))_{t\ge 0}$ for the Cauchy problem \eqref{CPS} converges in norm $\bP$-almost surely towards the orthogonal projector ${P_K}$ onto $K := \bigcap_{j=1}^N \ker A_j$.
\end{theorem}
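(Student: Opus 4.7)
The strategy is to reduce to the case $K=\{0\}$, prove a uniform strict-contraction estimate over suitable ``good'' blocks of the semi-Markov trajectory, and use a conditional Borel--Cantelli argument to show that such blocks occur infinitely often almost surely. For the reduction, note that by Remark~\ref{rem:wlog}, $P_K$ commutes with every $e^{tA_j}$ and hence with $S(t)$; since also $e^{tA_j}|_K=I|_K$, one has $S(t)|_K=I|_K$, hence $S(t)-P_K=S(t)P_K^\perp$, and the claim is equivalent to $\|S(t)|_{K^\perp}\|\to 0$ $\bP$-a.s. Because $K$ and $K^\perp$ are $A_j$-invariant for every $j$, restriction to $K^\perp$ preserves Assumptions~\ref{hpA} and~\ref{hpc}, so from now on we assume $\bigcap_j \ker A_j=\{0\}$ and aim at $\|S(t)\|\to 0$ $\bP$-a.s.

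For the central quantitative estimate, by irreducibility of $\pi$ and finiteness of $E$ I pick $M\in\bN$ such that from every state the chain visits every element of $E$ within $M$ steps with positive probability, and I pick $0<\eps<L<\infty$ with $G_j([\eps,L])>0$ for every $j\in E$ (possible by Assumption~\ref{hpZ}). Denote the compact set
\[
\cG := \{(j_\cdot,t_\cdot)\in E^M \times [\eps,L]^M : \{j_1,\dots,j_M\}=E\}
\]
and $Q(j_\cdot,t_\cdot):=e^{t_M A_{j_M}}\cdots e^{t_1 A_{j_1}}$. Each $e^{tA_j}$ with $t>0$ is compact (analytic semigroup with compact resolvent), so $Q$ is compact and $\|Q\|$ is attained at some unit vector $f\in H$. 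If $\|Q\|=1$, then every intermediate vector $f_k := e^{t_k A_{j_k}}\cdots e^{t_1 A_{j_1}} f$ satisfies $\|f_k\|=1$; the equality $\|e^{t_k A_{j_k}} f_{k-1}\| = \|f_{k-1}\|$ combined with Remark~\ref{rem:8bis} forces $f_{k-1}\in\ker A_{j_k}$ for every $k$, hence $f\in\bigcap_k\ker A_{j_k}=\bigcap_{j\in E}\ker A_j=\{0\}$, a contradiction. Thus $\|Q(j_\cdot,t_\cdot)\|<1$ everywhere on $\cG$; operator-norm continuity of $t_k\mapsto e^{t_k A_{j_k}}$ for $t_k>0$ (from analyticity) together with compactness of $\cG$ gives $\rho:=\sup_{\cG}\|Q\|<1$.

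To conclude, call the $k$-th non-overlapping block \emph{good} if $(X_{kM},\dots,X_{kM+M-1},\tau_{kM+1},\dots,\tau_{kM+M})\in\cG$. By the Markov structure, irreducibility of $\pi$, and the uniform bound $G_j([\eps,L])\ge p_0>0$, the conditional probability of this event given the past at $T_{kM}$ is bounded below by a deterministic $p>0$ uniformly in $k$ and the past; the conditional Borel--Cantelli lemma then yields infinitely many good blocks $\bP$-a.s. Since $\|S(T_n)\|$ is non-increasing in $n$ (each $e^{\tau A_j}$ being a contraction) and is multiplied by a factor $\le\rho$ across every good block, $\|S(T_n)\|\le\rho^{k(n)}$ with $k(n)\to\infty$ $\bP$-a.s., so $\|S(T_n)\|\to 0$ and, by non-increase on $[T_n,T_{n+1})$, $\|S(t)\|\to 0$ $\bP$-a.s. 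The main difficulty is the quantitative step: one needs simultaneously the immediate compactness of $e^{tA_j}$ for $t>0$ (to attain $\|Q\|$ at a unit vector, so that a pointwise strict inequality can be upgraded to a uniform one) and the strict dissipativity from Remark~\ref{rem:8bis} (to turn a single norm-preserving step into kernel membership); the ergodic ingredient is comparatively routine given the semi-Markov structure of Assumption~\ref{hpZ}.
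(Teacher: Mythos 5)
Your argument is correct, and its overall skeleton -- peel off $K$ using commutation, extract blocks of the trajectory that visit every state with waiting times in a fixed window, show each such block contracts the norm by a uniform factor $\rho<1$, and show such blocks occur infinitely often $\bP$-a.s. -- coincides with the paper's. The genuine difference lies in how the uniform strict-contraction estimate is obtained. The paper's central tool is Lemma~\ref{lem:ineq-not-so}, which requires all waiting times to exceed a (possibly large) threshold $\delta$ and is proved by a Young-inequality expansion that reduces the product of semigroups to the product of kernel projectors $P_{k_L}\cdots P_{k_1}P_K^\perp$, whose norm is $<1$ by Lemma~\ref{le:s.1}; the probabilistic step then waits for recurrences of a covering cycle with all inter-arrival times $\ge\delta$. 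You instead confine the times to a compact window $[\eps,L]^M$, establish the strict inequality \emph{pointwise} by the norm-attainment/kernel argument (this is precisely the paper's Lemma~\ref{lem:ineq-not-so-fixed-time}, which the paper only deploys for a deterministic clock), and upgrade it to a \emph{uniform} bound $\rho<1$ via operator-norm continuity of analytic semigroups on $(0,\infty)$ together with compactness of the parameter set. This bypasses Lemma~\ref{lem:ineq-not-so} and Lemma~\ref{le:s.1} entirely and is arguably cleaner; what the paper's route buys in exchange is an explicit, quantitative contraction constant (expressed through the $\varepsilon$ of the projector lemma and the spectral gaps $\lambda_b(A_i)$), whereas your $\rho$ is non-constructive. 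Two small points: the implication ``$\|e^{tA}x\|=\|x\|$ for a single $t>0$ implies $x\in\ker A$'' is Lemma~\ref{lem:a1}(2), not Remark~\ref{rem:8bis} -- the negative growth bound on $H_d$ alone does not yield strict decay at every fixed $t>0$ without the monotonicity/analyticity argument of that lemma -- and your reduction to $K^\perp$ implicitly uses that a contraction fixing $K$ pointwise leaves $K^\perp$ invariant; both facts are true under the standing assumptions, so these are citation slips rather than gaps.
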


Let us finally discuss the asymptotic behavior of the random evolution problem \eqref{CPS} under an additional assumption that was inspired by a result from~\cite{AreDieKra14}, where non-autonomous diffusion equations on a fixed network are studied.
Our aim is to study when the solution converges exponentially, for all initial data $f$, towards the orthogonal projector of $f$ onto the eigenspace with respect to the simple eigenvalue $0$. 
Adapting the ideas of~\cite{AreDieKra14} to our general setting, we shall impose the following.

\begin{assumption}\label{a-K}
{The null space of at least one operator in the ensemble $\mathcal K$, say $A_1$, agrees with $K= \bigcap_{j=1}^N \ker A_j$.}
\end{assumption}

It turns out that under this additional assumption $(S(t))_{t\ge 0}$ converges in norm \emph{exponentially} fast towards the orthogonal projector $P_K$.  We stress that this is a deterministic assertion, unlike that of Theorem \ref{t1}.

\begin{theorem}\label{th-adapt-ADFK}
Under the Assumptions \ref{hpZ}, \ref{hpA}, \ref{hpc}, and \ref{a-K} the random propagator $(S(t))_{t\ge 0}$ for the Cauchy problem \eqref{CPS} converges in norm $\bP$-almost surely towards the orthogonal projector ${P_K}$. The convergence is exponential with rate $\alpha$, where
	\begin{equation*}
	\alpha \ge \lim_{t \to + \infty} \frac{1}{t} \int_{0}^{t} \big(-s_d(A_1)\big) \uno_{(Z(s) = A_1)} \, {\rm d}s = \big(-s_d(A_1)\big)\Theta_1 > 0,
	\end{equation*}
where $s_d(A_1)$, introduced in Remark \ref{rem:8bis}, is strictly negative thanks to the Assumption \ref{a-K} and $\Theta_1$ where introduced in \eqref{e:tot-time}.
\end{theorem}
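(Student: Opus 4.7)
The plan is to derive a \emph{pathwise} operator estimate for $\|S(t)-P_K\|$ in terms of the Lebesgue measure of $\{s\in[0,t]:Z(s)=1\}$, and then to turn it into an almost sure statement via the ergodic formula \eqref{e:tot-time}.

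As a first step I would reduce the problem to the orthogonal complement of $K$. By Remark~\ref{rem:wlog}, $P_K$ commutes with each $e^{sA_j}$, hence with $S(t)$; moreover $e^{sA_j}$ (and therefore $S(t)$) acts as the identity on $K$, since $K \subset \ker A_j$ for every $j$. Consequently
\[
S(t)-P_K \;=\; S(t)\,P_K^\perp,
\]
and $K^\perp$ is invariant under each semigroup $e^{sA_j}$, on which the restriction $e^{sA_j}|_{K^\perp}$ is still a contraction.

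The decisive ingredient is the special role of $A_1$. By Assumption~\ref{a-K} we have $\ker A_1=K$, so in the spectral decomposition of Remark~\ref{rem:8bis} applied to $A_1$ the subspace $K^\perp$ coincides with $H_d$; therefore
\[
\|e^{sA_1}|_{K^\perp}\|\le e^{s_d(A_1)s},\qquad s\ge 0,\quad s_d(A_1)<0.
\]
Inserting $P_K^\perp$ into the product \eqref{e2} (permissible because $P_K$ commutes with every factor) and using submultiplicativity of the operator norm on $K^\perp$, while bounding by $1$ those factors with $X_k\neq 1$, I obtain for every $t\in[T_n,T_{n+1})$ the deterministic pathwise estimate
\[
\|S(t)-P_K\| \;=\; \|S(t)P_K^\perp\| \;\le\; \exp\!\left(s_d(A_1)\int_0^t \uno_{\{Z(s)=1\}}\,{\rm d}s\right).
\]

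To conclude, I would take logarithms, divide by $t$, and let $t\to\infty$. By Remark~\ref{rem:itfllow} (and \eqref{e:tot-time}) the time average $t^{-1}\int_0^t \uno_{\{Z(s)=1\}}\,{\rm d}s$ converges $\bP$-a.s.\ to $\Theta_1>0$, and hence
\[
\limsup_{t\to\infty}\frac{1}{t}\log\|S(t)-P_K\| \;\le\; s_d(A_1)\Theta_1 \;<\;0 \qquad \bP\text{-a.s.},
\]
which is precisely the exponential convergence at rate at least $-s_d(A_1)\Theta_1$.

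The only slightly delicate point is the invariance of $K^\perp$ under each $e^{sA_j}$, which however is built into Remark~\ref{rem:wlog}; the rest is a clean combination of contractivity on $K^\perp$, the spectral gap of $A_1|_{K^\perp}$, and the strong-law-type identity \eqref{e:tot-time}. Notice also that the assertion of norm convergence towards $P_K$ is already contained in Theorem~\ref{t1}, so the argument above is only needed to quantify the rate.
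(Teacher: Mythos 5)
Your argument is correct and arrives at exactly the same pathwise estimate as the paper, but it is packaged differently. The paper fixes $f$, differentiates $s\mapsto\|P_K^\perp u(s)\|^2$ along the trajectory, discards the states $j\neq 1$ by dissipativity of all the $A_j$, invokes the numerical-range estimate $\Re( A_1v,v)\le s_d(A_1)\|v\|^2$ for $v\in K^\perp=(\ker A_1)^\perp$ (Remark~\ref{rem:8bis} combined with Assumption~\ref{a-K}), and closes with Gronwall's lemma; you instead commute $P_K^\perp$ through the product \eqref{e2}, bound every factor with $X_k\neq 1$ by $1$ and every factor with $X_k=1$ by $e^{s_d(A_1)\tau_{k+1}}$, and multiply. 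The two key inputs are equivalent: your semigroup bound $\|e^{sA_1}|_{K^\perp}\|\le e^{s_d(A_1)s}$ is precisely the integrated form of the numerical-range inequality used in the paper, and both are immediate when $A_1$ is self-adjoint (the case the paper emphasizes); for a general analytic contraction semigroup, Remark~\ref{rem:8bis} only guarantees the growth bound up to a constant $M\ge 1$, and the constant must equal $1$ for either argument to yield the stated rate --- a point worth flagging, though your proof is no worse off here than the paper's. Your multiplicative formulation has the small advantages of producing directly an operator-norm bound on $S(t)-P_K$ (rather than a bound on $\|P_K^\perp S(t)f\|$ for each $f$) and of avoiding the differentiation of the squared norm, while the Gronwall route would survive a switching rule that is not piecewise constant. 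The concluding passage via \eqref{e:tot-time} is identical in both proofs, and you are right that the mere convergence statement is already covered by Theorem~\ref{t1}.
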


We postpone the proofs of our main results to Section~\ref{sez:proofs}. 

\begin{remark}
1) The Assumption~\ref{hpA} is especially satisfied if each $A_j$ is self-adjoint and negative semi-definite. 
In this case, moreover, $s_d(A_j) = \lambda_{k_j+1}(A_j)$ the largest non-zero eigenvalue ($k_j$ being the dimension of $\ker(A_j)$).

There are, however, further classes of operators satisfying it. If the semigroup generated by $A_j$ is positive and irreducible, for example, it follows from the Kre\u{\i}n--Rutman Theorem that the generator's spectral bound is a simple, isolated eigenvalue.

2)  We remark that the Assumption \ref{a-K} is not always satisfied: for instance, if all the null spaces $\ker A_j$ are one-dimensional but the intersection space $K$ is trivial. 
On the other hand, it is satisfied for all $\mathcal K$ that contain at least one operator $A_1$ with trivial null space.
\end{remark}

\subsection{Randomly switching heat equations}
\label{exa:neukredir}

The scope of our result is not restricted to graphs and networks.
To illustrate this, we consider a toy model -- a heat equation with initial data $f\in L^2(0,1)$, under different boundary conditions --  where the switching takes place at the level of operators, rather than underlying structures. 
Here we show that convergence to the projector onto the intersection of the null spaces holds. 
A more complex example, where the thermostat model with switching in the boundary conditions, is given in \cite{Lawley2018}: in that case, non-ergodicity is possible under certain conditions on the parameters.

\begin{enumerate}
\item 
We first consider two different realizations $A_1$, $A_2$ of the Laplacian acting on $L^2(0,1)$: with Neumann and with Krein--von Neumann boundary conditions, which lead to the domains
\begin{equation}
\label{domaincaseA}
\begin{split}
D(A_1):= \{u  \in H^2(0,1): u'(0) = u'(1) = 0\}
\end{split}
\end{equation}
and
\begin{equation}
\label{domaincaseC}
\begin{split}
D(A_2):=\left\{u \in H^2(0,1):u'(0) = u'(1) =u(1)-u(0)\right\}
\end{split}
\end{equation}
respectively, \cite[Exa.~14.14]{Sch12}. 
Both operators satisfy the Assumption~\ref{hpA}. 
Furthermore, the null space of the former realization is one-dimensional, as it consists of the constant functions; whereas a direct computation shows that null space of the latter realization is 2-dimensional, as it consists of all affine functions on $[0,1]$; hence the intersection $K$ of both null spaces is spanned by the constant function $\mathbbm 1$ on $(0,1)$. 
Both associated heat equations are well-posed, yet the latter is somewhat exotic in that the governing semigroup is not sub-markovian. 
We are interested in the long-time behavior of this mixed system~\eqref{CPS}, with $A(Z(t))\in \{A_1,A_2\}$: if the switching obeys the rule in the Assumption~\ref{hpZ}, the random propagator $(S(t))_{t\ge 0}$ converges in norm $\mathbb P$-almost surely towards the orthogonal projector onto the intersection of both null spaces, i.e., onto the space of constant functions on $[0,1]$; hence the solution of the abstract random Cauchy problem~\eqref{CPS} converges $\mathbb P$-almost surely towards the mean value of the initial data $f\in L^2(0,1)$.

\item 
On the other hand, if we aim at studying the switching between Dirichlet and Neumann boundary conditions, and thus introduce the realization $A_3$ with domain
\begin{equation}
\label{domaincaseD}
\begin{split}
D(A_3):= \{u  \in H^2(0,1): u(0) = u(1) = 0\},
\end{split}
\end{equation}
then one sees the intersection space $K$ is trivial, as $\ker A_3=\{0\}$, hence the random propagator converges in norm $\mathbb P$-almost surely to 0 if the Assumption~\ref{hpZ} is satisfied. 

\item Also observe that upon perturbing $A_3$ we find the new operator
\[
\begin{split}
\tilde{A}_3 u&:= A_3 u+\pi^2 u\\
D(\tilde{A}_3)&:=D(A_3),
\end{split}
\]
whose null space is now one-dimensional, as it is spanned by $\sin(\pi\cdot)$. 
Nevertheless, $\ker A_1\cap \ker \tilde{A}_3=\{0\}$, hence again under the Assumption~\ref{hpZ} the system switching between $A_1,\tilde{A}_3$ converges towards 0.

\item 
Finally, let us consider a switching between $A_1$ and $A_4$ defined as
\[
\begin{split}
A_4 u&:= \frac{d}{dx}\left( p\frac{du}{dx}\right)\\
D(A_4)&:=D(A_1),
\end{split}
\]
where $p\in W^{1,\infty}(0,1)$, $p(x)>0$ for all $x\in [0,1]$. Because $\ker A_1$ and $\ker A_4$ both agree with the space of constant functions, under the Assumption~\ref{hpZ} the random propagator converges in norm $\mathbb P$-almost surely towards the orthogonal projector onto the space of the constant functions.
\end{enumerate}

Moreover, as a consequence of Theorem \ref{th-adapt-ADFK} 
we can observe the exponential convergence of the random propagator for some (but not all) of these toy models. In particular, this holds whenever we take $A_3$ in the ensemble $\cK$: indeed, we have $K=\{0\}$ and then the Assumption \ref{a-K} is satisfied, since the first eigenvalue $\lambda_1^{(3)}$ of $A_3$ is strictly negative. 
The exponential convergence of $(S(t))_{t\ge 0}$ can be shown also for randomly switching systems where $\cK\subset \{A_1,A_2,A_4\}$.
In all of these cases $K$ agrees with the space of constant functions on $(0,1)$, hence it is one-dimensional and the Assumption \ref{a-K} is still fulfilled, since the second eigenvalue $\lambda_{2}^{(j)}<0,$ for $j=1,4$.
On the other hand, we cannot apply Theorem \ref{th-adapt-ADFK} and then prove the exponential convergence of the random propagator for all those models which switch $\tilde{A}_3$ with $A_1$ or/and $A_2$ or/and $A_4$. In fact, this implies that the intersection space $K$ is trivial again, but no one operator has strictly negative first eigenvalue.

\section{Technical lemmas and proofs}
\label{sez:proofs}
\subsection{A monotonicity lemma}
\label{sez4}

The following lemma \ref{lem:ineq-not-so}
provides the crucial tool to prove the assertion of Theorem \ref{t1}.
It shows how we can bound the norm of the random product of matrices which generates the random propagator $(S(t))_{t\ge 0}$ with respect to the stopping times.

Let $L\ge N$ and $(k_1, \dots, k_L)$ be a sequence of indices that covers the whole $E = \{1, \dots, N\}$. Given an ensemble $\cK$ of operators satisfying the Assumption~\ref{hpA}, let us consider the associated sequence of operators $(A_{k_1}, \dots, A_{k_L})$ taken from $\cK$.
We shall denote $P_j$ the projection on the kernel $\ker A_j$ and $P_K$ the projection on $K = \cap_{j=1}^L \ker A_{k_j} = \cap_{i=1}^N \ker A_i$.

\begin{remark}
In the proof we will need some known results in functional analysis: if $T$ is a compact operator on a reflexive Banach space $X$, then there exists $x$ belonging to the unit sphere of $X$ such that $\|T\|=\|Tx\|$, i.e., the norm of $T$ is attained: see \eg\ \cite[Corollary 1]{Acosta2006}. 
This is in particular true if $T=T(t)$ for some $t>0$, provided the semigroup generated by $A$ is analytic (or even merely norm continuous) and $A$ has compact resolvent, see~\cite[Thm.~II.4.29]{EngNag00}.
Moreover, the compact operators form a two-side ideal in $L(H)$.
\end{remark}

The following results are necessary steps in order to prove the main result of this section.

\begin{lemma}\label{lem:a1}
Let $(T(t))_{t\ge 0}$ be a contractive, analytic strongly continuous semigroup on a Hilbert space $H$ whose generator $A$ has compact resolvent and no eigenvalue on the imaginary axis, with the possible exception of $0$; let us denote by $P$ the orthogonal projector onto $\ker A$.

Then the following assertions hold:
\begin{enumerate}[(1)]
\item $\|T(t')x\|<\|T(t)x\|$ for all $x\not\in \ker A$ and all $t'>t\ge 0$;
\item $\ker A=\{x\in H:\|T(t_0)x\|=\|x\|\}$ for some $t_0>0$.
\end{enumerate}
\end{lemma}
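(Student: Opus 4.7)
My plan is to establish (2) first and then deduce (1) as a short consequence. The conceptual core is to identify the ``isometric subspace''
\[
H_u := \{ y \in H \,:\, \|T(t)y\| = \|y\| \text{ for all } t \ge 0\}
\]
with $\ker A$. Part (2) is then the observation that lying in $H_u$ is already detected by a single time $t_0 > 0$.

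For (2), I would first argue that any $x$ with $\|T(t_0)x\|=\|x\|$ actually lies in $H_u$. Contractivity makes $t\mapsto \|T(t)x\|^2$ non-increasing, so it is constant on $[0,t_0]$; and since the semigroup is analytic, $t\mapsto T(t)x$ and hence $t\mapsto\|T(t)x\|^2$ is real-analytic on $(0,\infty)$, so the constancy propagates to all $t\ge 0$. Next I would check that $H_u$ is a closed, $T(t)$-invariant linear subspace: closedness and invariance are immediate, while the subspace property follows from applying the parallelogram identity to $x\pm y$ for $x,y\in H_u$, together with contractivity of $T(t)$, which forces equality throughout and in particular $\langle T(t)x,T(t)y\rangle = \langle x,y\rangle$.

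The decisive step is to show $H_u\subseteq \ker A$. Since $A$ has compact resolvent and generates an analytic semigroup, each $T(t)$ with $t>0$ is compact on $H$; the restriction $T(t)|_{H_u}$ is therefore a compact isometry of the Hilbert space $H_u$, which forces $\dim H_u<\infty$ (a compact operator cannot be bounded below on an infinite-dimensional space: it would map an orthonormal sequence to a sequence of mutual distance $\sqrt{2}$). On this finite-dimensional invariant subspace, $(T(t))|_{H_u}$ is an isometric $C_0$-semigroup and hence extends to a unitary group on $H_u$; its generator is a skew-Hermitian matrix whose eigenvalues are both purely imaginary and genuine eigenvalues of $A$ (their eigenvectors lying in $H_u\cap D(A)\subseteq D(A)$). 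The hypothesis that $A$ has no eigenvalue on $i\mathbb{R}$ except possibly $0$ then forces $A|_{H_u}=0$, giving $H_u\subseteq\ker A$. The reverse inclusion is trivial, so $H_u=\ker A$, which establishes (2) for every choice of $t_0>0$.

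For (1), given $t'>t\ge 0$ and assuming for contradiction that $\|T(t')x\|=\|T(t)x\|$, I would apply (2) to $y:=T(t)x$ and $s:=t'-t>0$ to conclude $y\in \ker A$. If $t=0$ this already gives $x=y\in\ker A$; if $t>0$, then $T(\tau)x=T(\tau-t)y=y$ for all $\tau\ge t$, and by real-analyticity of $\tau\mapsto T(\tau)x$ on $(0,\infty)$ together with the identity theorem, the constancy extends to all $\tau>0$, whence strong continuity yields $x=\lim_{\tau\to 0^+}T(\tau)x=y\in\ker A$. The contrapositive delivers the strict inequality. The step I expect to be the main obstacle is showing that $H_u$ is finite-dimensional in such a way that its generator can be legitimately described as a skew-Hermitian matrix whose spectrum is contained in $\sigma(A)$; compactness of $T(t)$ for $t>0$ combined with the no-compact-isometry principle on infinite-dimensional Hilbert spaces is what makes this work.
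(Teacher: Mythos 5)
Your proof is correct, but it runs in the opposite direction from the paper's and rests on a different key mechanism. The paper proves (1) first: on $(\ker A)^\perp$ it invokes the Jacobs--de Leeuw--Glicksberg theory (via \cite[Thm.~V.2.14, Cor.~V.2.15]{EngNag00}) to get $T(t)x\to 0$, and then combines this with real-analyticity of $t\mapsto\|T(t)x\|^2$ and the identity theorem to rule out $\|T(t)x\|=\|T(t')x\|$; assertion (2) is then a one-line corollary. You instead prove (2) first by introducing the isometric subspace $H_u=\{y:\|T(t)y\|=\|y\|\ \forall t\}$ and showing $H_u=\ker A$: the parallelogram-law argument makes $H_u$ a closed invariant subspace, compactness of $T(t)$ for $t>0$ (which does follow from analyticity plus compact resolvent, \cite[Thm.~II.4.29]{EngNag00}) forces $\dim H_u<\infty$ via the no-compact-isometry principle, and then the restricted semigroup is a unitary group whose skew-Hermitian generator has purely imaginary eigenvalues that are eigenvalues of $A$, hence all zero by hypothesis, hence the generator vanishes. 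This is, in effect, a self-contained proof of exactly the fragment of the JdLG splitting that the paper imports as a black box, so your route is more elementary in its prerequisites at the cost of being longer; both arguments still need real-analyticity of $t\mapsto\|T(t)x\|^2$ together with the identity theorem (you use it to propagate constancy from $[0,t_0]$ to all of $(0,\infty)$, and again in deducing (1) from (2)). A small bonus of your approach is that it visibly establishes (2) for \emph{every} $t_0>0$, which the somewhat awkward ``for some $t_0>0$'' in the statement leaves implicit. All the individual steps check out, including the deduction of (1): if $\|T(t')x\|=\|T(t)x\|$ then $T(t)x\in\ker A$, and analytic continuation back to $\tau\to 0^+$ gives $x\in\ker A$.
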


\begin{proof}
(1) Fix $x\not\in \ker A$ and let $0 \le t < t'$.
	
Let us first consider the case of injective $A$, so that $P = 0$. Then $x \not= 0$, and $T(t)x \to 0$ as $t \to \infty$ by the Jacobs--deLeeuw--Glicksberg theory, see~\cite[Thm.~V.2.14 and Cor.~V.2.15]{EngNag00}. Due to analyticity  of the semigroup, the mapping $\varphi: (0,\infty) \ni t \mapsto \|{T(t)x}\|^2 \in \mathbb{R}$ is real analytic: indeed, for each $x\in H$ the mapping $(0,\infty) \ni t \mapsto T(t)x\in H$ is real analytic, hence it can be represented by an absolutely converging power series, say $T(t)x=\sum_{k=0}^\infty t^k f_k $; but then, the Cauchy product of $\sum_{k=0}^\infty t^k f_k $ with itself, given by $\sum_{m=0}^\infty t^m \sum_{l=0}^m (f_l,f_{m-l})$, is absolutely converging towards $\|{T(t)x}\|^2=(T(t)x,T(t)x)$.
	
If $\|T(t)x\| = \|T(t')x\|$, then $\varphi$ is constant on the interval $[t,t']$: 	indeed, by contractivity of the semigroup
\begin{align*}
\|T(s)x\|\le \|T(t)x\|=\|T(t')x\|\le \|T(s)x\|\qquad \hbox{for all }s\in [t,t'].
\end{align*}
Due to the identity theorem for real analytic functions, $\varphi$ is now constant on $(0,\infty)$ -- a contradiction, since $\varphi(t) \to \|x\|^2 \not= 0$ as $t \searrow 0$, but $\varphi(t) \to 0$ as $t \to \infty$. This proves the theorem in case that $P = 0$.
	
Let us now consider the case of general $P$: observe that
$Px\ne x$, since $x \not\in \ker A$.
Applying the first step of the proof to the restriction of $(T(t))_{t\ge 0}$ to the $H\ominus \ker A$, we see that
\begin{align*}
\|T(t)(I-P)x\|^2 > \|T(t')(I-P)x\|^2,
\end{align*}
hence by Pythagoras' theorem 
\begin{equation}\label{tfp}
\begin{split}
\|T(t)x\|^2& = \|T(t)Px\|^2 + \|T(t)(I-P)x\|^2 \\
&> \|T(t)Px\|^2 + \|T(t')(I-P)x\|^2\\
&= \|T(t')Px\|^2 + \|T(t')(I-P)x\|^2 = \|T(t')x\|^2.
\end{split}
\end{equation}
where the second to last identity holds because the fixed space of $(T(t))_{t\ge 0}$
\[
{\rm fix}(T(t))_{t\ge 0}:=\{x\in H:T(t)x=x\hbox{ for all }t\ge 0\}
\]
agrees with the null space of its generator $A$ by~\cite[Cor.~IV.3.8]{EngNag00}, hence $T(t)y=y$ for all $y\in \ker A$  and all $t\ge 0$.

(2) We see that
\[
\begin{split}
{\rm fix}(T(t))_{t\ge 0}&\subset \{x\in H:\|T(t)x\|=\|x\| \hbox{ for all }t\ge 0\}\\
&\subset \{x\in H:\|T(t_0)x\|=\|x\| \hbox{ for some }t_0\ge 0\}\\
&\stackrel{(1)}{\subset} \ker A.
\end{split}
\]
This concludes the proof, since as recalled before $\ker A={\rm fix}(T(t))_{t\ge 0}$.
\end{proof}

The following is probably linear algebraic folklore, but we choose to give a proof since could not find an appropriate reference.

\begin{lemma}\label{le:s.1}
Let $H$ be an Hilbert space {\color{black} and $P_1,\ldots,P_m$ be finitely many orthogonal projectors on $H$; let $P_K$ the orthogonal projector onto $\displaystyle K:=\bigcap_{i=1}^m \rg P_i$. If $P_i$ is compact for at least one $i=1,\ldots,L$, t}hen the operator $P_{k_L} \dots P_{k_1} P_K^\perp$ has norm strictly less than 1:
\begin{align*}
\|P_{k_L} \dots P_{k_1} P_K^\perp\| = 1-\varepsilon < 1.
\end{align*}
\end{lemma}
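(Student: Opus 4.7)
The overall strategy is to exploit the compactness hypothesis to turn a norm-estimate into an attainment problem, and then exploit the rigidity of orthogonal projectors (a vector is not shortened by an orthogonal projector unless it already lies in the range). The operator $T := P_{k_L} \cdots P_{k_1} P_K^\perp$ is a product of orthogonal projectors, hence $\|T\| \le 1$. Since at least one factor $P_{k_j}$ is compact and the compact operators form a two-sided ideal in $\mathcal L(H)$, $T$ is compact. By the remark quoted before the statement, the norm of a compact operator on a Hilbert space is attained, so there exists $x \in H$ with $\|x\|=1$ and $\|Tx\| = \|T\|$.

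I would then argue by contradiction, assuming $\|T\|=1$. Setting $y_0 := P_K^\perp x$ and $y_i := P_{k_i} y_{i-1}$ for $i=1,\dots,L$, the contractivity of each factor gives
\begin{equation*}
1 = \|Tx\| = \|y_L\| \le \|y_{L-1}\| \le \cdots \le \|y_0\| = \|P_K^\perp x\| \le \|x\| = 1,
\end{equation*}
so every inequality is an equality. The key tool is the elementary fact that for an orthogonal projector $P$ one has $\|Pz\|=\|z\|$ if and only if $Pz=z$, i.e., $z\in\rg P$. Applied at each step, this yields $y_{i-1}\in \rg P_{k_i}$ and $y_i = y_{i-1}$ for every $i$, so in particular $y_0 = P_K^\perp x \in \rg P_{k_j}$ for every $j=1,\dots,L$. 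Since by assumption the finite sequence $(k_1,\dots,k_L)$ exhausts the index set $\{1,\dots,m\}$, this means $y_0 \in \bigcap_{i=1}^m \rg P_i = K$.

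On the other hand, applying the same characterization to the chain $\|P_K^\perp x\|=\|x\|$ forces $x \in \rg P_K^\perp = K^\perp$, so $x = P_K^\perp x = y_0 \in K$. Hence $x \in K\cap K^\perp = \{0\}$, which contradicts $\|x\|=1$. Therefore $\|T\|<1$, which is the desired conclusion (one may take $\varepsilon := 1 - \|T\|$).

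I do not anticipate a genuine obstacle: the argument is essentially a Hilbert-space rigidity lemma bolted onto a compactness-gives-attainment observation. The only delicate point is that without the compactness of some $P_{k_j}$ (and hence of $T$), one would only be able to work with a maximizing sequence, and the strict contractivity of orthogonal projectors off their range could fail in the limit -- this is precisely why the compactness hypothesis is essential to the statement.
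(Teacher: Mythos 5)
Your proof is correct and follows essentially the same route as the paper's: compactness of one factor makes the whole product compact and hence norm-attaining, and then the chain of forced equalities together with the rigidity of orthogonal projectors ($\|Pz\|=\|z\|$ iff $Pz=z$) shows the maximizing vector lies in $K\cap K^\perp=\{0\}$, a contradiction. Your write-up is if anything slightly more careful than the paper's, in that it makes the intermediate vectors $y_i$ and the use of the covering hypothesis on $(k_1,\dots,k_L)$ explicit.
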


\begin{proof}
It is obvious that $\|P_{m} \dots P_{1} P_K^\perp\| \le 1$.
We proceed by contradiction and assume that 
\begin{equation}
\label{eq:s.contra}
\|P_{m} \dots P_{1} P_K^\perp\| = 1.
\end{equation}
Since at least one $P_i$ is  compact, so is the whole product, hence it is norm-attainable: there exists $x \in H$ with $\|x\| = 1$ such that $\|P_{m} \dots P_{1} P_K^\perp x\| = \|x\| = 1$.

Notice that
\begin{align*}
1 = \|P_{m} \dots P_{1} P_K^\perp x\| \le \|P_{m} \dots P_{1}\| \| P_K^\perp x\| 
\end{align*}
hence $\|P_K^\perp x\| = 1 = \|x\|$ and it follows that $x = P_K^\perp x$.
We then substitute in previous equality and get
\begin{align*}
1 = \|P_m \dots P_1 P_K^\perp x\| = \|P_m \dots P_1 x\| 
\end{align*}
and the same reasoning implies $\|P_1 x \| = 1$, and $x = P_1 x$.
Reiterating the same argument we obtain
$x = P_j x$ for any $j=1, \dots, m$, therefore $x \in K$; but we have $x = P_K^\perp x$, which implies $x=0$, a contradiction to $\|x\|=1$.
Therefore, \eqref{eq:s.contra} is false and the thesis follows.
\end{proof}

We now proceed to prove the main result of this section.
Recall that the operators $A_j$ are negative (non-positive) defined and $P_j$ is the projection on $\ker A_j$.

\begin{lemma}\label{lem:ineq-not-so}
In previous assumptions, for  $\eta > 0$ small enough there exists $\delta > 0$ such that, for  $t_i \ge \delta > 0$, $i=1, \dots, L$ we have
\begin{equation}
\label{bound}
||P_K^\perp e^{t_{L}A_{k_L} } \cdots e^{t_1 A_{k_1}}|| \le 1-\eta < 1.
\end{equation}
\end{lemma}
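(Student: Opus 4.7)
My plan is to combine Lemma~\ref{lem:a1}(2) with a compactness argument on the parameter space. First I would compactify each coordinate of $[\delta,\infty)^L$ by adjoining a point at infinity, exploiting the operator-norm convergence $e^{tA_j}\to P_j$ as $t\to\infty$ (with exponential rate $s_d(A_j)<0$) guaranteed by Remark~\ref{rem:8bis}. On the resulting compact set $[\delta,\infty]^L$ I would define a function $F\colon[\delta,\infty]^L\to[0,1]$ by
\[
F(s_1,\ldots,s_L):=\|P_K^\perp Q_L(s_L)\cdots Q_1(s_1)\|,\qquad Q_i(s):=\begin{cases}e^{sA_{k_i}},& s<\infty,\\ P_{k_i},& s=\infty,\end{cases}
\]
which is continuous on $[\delta,\infty]^L$: on the interior because analytic semigroups are norm-continuous on $(0,\infty)$, and at the boundary points thanks to Remark~\ref{rem:8bis}.

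The central step would be to show $F(s)<1$ at every point $s\in[\delta,\infty]^L$. For fixed $s$ the product $Q_L(s_L)\cdots Q_1(s_1)$ is compact — each $e^{sA_{k_i}}$ with $s\ge\delta>0$ is compact by analyticity plus compact resolvent, each $P_{k_i}$ has finite rank, and compact operators form an ideal — so any equality $F(s)=1$ would be attained at some unit vector $x\in H$. Writing $y_0:=x$ and $y_j:=Q_j(s_j)y_{j-1}$, the string of contractive inequalities
\[
1=\|P_K^\perp y_L\|\le\|y_L\|\le\|y_{L-1}\|\le\cdots\le\|y_0\|=1
\]
must collapse to equalities, forcing $\|Q_j(s_j)y_{j-1}\|=\|y_{j-1}\|$ for each $j$. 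This yields $y_{j-1}\in\ker A_{k_j}$ either directly when $Q_j=P_{k_j}$, or via Lemma~\ref{lem:a1}(2) with $t_0=s_j>0$ when $Q_j=e^{s_jA_{k_j}}$; hence $Q_jy_{j-1}=y_{j-1}$, so $x=y_0=\cdots=y_L$ and $x\in\ker A_{k_j}$ for every $j=1,\ldots,L$. Since $(k_1,\ldots,k_L)$ covers $E=\{1,\ldots,N\}$, this places $x\in K$, forcing $P_K^\perp x=0$ and contradicting $\|P_K^\perp y_L\|=1$.

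With $F<1$ pointwise on the compact set $[\delta,\infty]^L$, continuity would yield a maximum $\max F=:1-\eta<1$, and~\eqref{bound} would follow since $[\delta,\infty)^L\subset[\delta,\infty]^L$. I expect the main obstacle to be the continuity of the extension $F$ at the ``boundary'' points where one or several coordinates equal $\infty$: this requires genuine \emph{operator-norm} (not merely strong) convergence $e^{tA_j}\to P_j$, which is where the combined hypotheses of analyticity and compact resolvent from Assumptions~\ref{hpA} and~\ref{hpc} really come into play via Remark~\ref{rem:8bis}. Note that the ``all coordinates equal to $\infty$'' corner recovers exactly the statement of Lemma~\ref{le:s.1}, so the argument can be viewed as a joint refinement of Lemmas~\ref{lem:a1} and~\ref{le:s.1} robust enough to handle mixed regimes.
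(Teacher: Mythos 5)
Your argument is correct, but it takes a genuinely different route from the paper's. The paper proves \eqref{bound} by a quantitative recursion: at each step it inserts $P_{k_j}+P_{k_j}^\perp$, applies Young's inequality with a parameter $\alpha$, and telescopes to an explicit bound of the form
\[
(1+\alpha)^L\,\|P_{k_L}\cdots P_{k_1}P_K^\perp x\|^2+(1+\alpha^{-1})\sum_{i=1}^L e^{-2t_i\lambda_b(A_{k_i})}\,\|x\|^2,
\]
after which Lemma~\ref{le:s.1} controls the first term and a suitable choice of first $\alpha$ and then $\delta$ finishes the proof (the footnote even records admissible explicit values of $\eta$ and $\delta$). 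You instead compactify the parameter set $[\delta,\infty)^L$, prove \emph{pointwise} strict contractivity by a norm-attainment/rigidity argument --- which at the all-infinite corner literally reproduces the proof of Lemma~\ref{le:s.1}, and at finite coordinates is the argument of Lemma~\ref{lem:ineq-not-so-fixed-time} via Lemma~\ref{lem:a1}(2) --- and then conclude by continuity on a compact set. Your version is softer and yields the formally stronger quantification ``for every $\delta>0$ there exists $\eta>0$ uniform over all $t_i\ge\delta$'', so it subsumes Lemma~\ref{lem:ineq-not-so-fixed-time} as well; the price is that it is non-constructive, whereas the paper's computation produces explicit constants in terms of $\varepsilon$ and the spectral gaps. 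The two delicate points you flag --- operator-norm (not merely strong) convergence $e^{tA_j}\to P_j$, and compactness/norm-attainment of the product --- are exactly the ingredients the paper also relies on (via Remark~\ref{rem:8bis} and the remark preceding Lemma~\ref{lem:a1}), so no hypotheses beyond Assumptions~\ref{hpA} and~\ref{hpc} are consumed.
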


\begin{proof}
Recall that $P_i$ is the orthogonal projection on $\ker A_i$, $K = \cap_{i=1}^N \ker(A_i)$, and the projection on $K$ satisfies  $P_K P_i = P_K = P_i P_K$, $P_K^\perp P_i^\perp = P_i^\perp = P_i^\perp P_K^\perp$.
We have 
\[
\begin{split}
\| e^{t_L A_L} \dots e^{t_2A_2}e^{t_1A_1}P_K^\perp x \|^2
&= \| e^{t_L A_L} \dots e^{t_2A_2}e^{t_1A_1}(P_1 + P_1^\perp)P_K^\perp x \|^2
\\
&\le (1+\alpha) \| e^{t_L A_L} \dots e^{t_2A_2}e^{t_1A_1}P_1P_K^\perp x \|^2\\
&\qquad + (1 + \alpha^{-1}) \| e^{t_L A_L} \dots e^{t_2A_2}e^{t_1A_1}P_1^\perp P_K^\perp x \|^2 
\\
&\le (1+\alpha) \| e^{t_L A_L} \dots e^{t_2A_2} P_1P_K^\perp x \|^2 + (1 + \alpha^{-1})\| e^{t_L A_L} \dots e^{t_2A_2} \|^2 \|e^{t_1A_1}P_1^\perp x \|^2 
\end{split}
\]
where we use the fact that $e^{t_1A_1}P_1x = P_1x$ for any $x \in H$, $t_1 \ge 0${, and that $\ker A_1\supset K$, so $( \ker A_1)^\perp \subset K^\perp$}; {\color{black}the first estimate follows from Young's inequality}. Notice further that all semigroups involved are contraction operators, hence $\|e^{t_L A_L} \dots e^{t_2A_2}\|^2 \le 1$; 
finally, we have 
$\| e^{t_1A_1}P_1^\perp x \| \le e^{- t_1 \lambda_b(A_1)} \| P_1^\perp x \| \le e^{- t_1 \lambda_b(A_1)} \| x \|$.
Hence
\[
\| e^{t_L A_L} \dots e^{t_2A_2}e^{t_1A_1}P_K^\perp x \|^2
\le (1+\alpha) \| e^{t_L A_L} \dots e^{t_2A_2} P_K^\perp P_1 x \|^2 +  (1 + \alpha^{-1}) e^{- 2 t_1 \lambda_b(A_1)} \| x \|^2 .
\]
We continue by splitting the first term in the right hand side
\begin{align*}
&\| e^{t_L A_L} \dots e^{t_2A_2}e^{t_1A_1}P_K^\perp x \|^2
\\
&\le (1+\alpha) \| e^{t_L A_L} \dots e^{t_2A_2} (P_2 + P_2^\perp)P_K^\perp P_1 x \|^2 +  (1 + \alpha^{-1}) e^{- 2 t_1 \lambda_b(A_1)} \| x \|^2 
\\
&\le (1+\alpha)^2  \| e^{t_L A_L} \dots e^{t_2A_2} P_2 P_K^\perp P_1 x \|^2 + (1 + \alpha^{-1}) \| e^{t_L A_L} \dots e^{t_2A_2} P_2^\perp P_K^\perp P_1 x \|^2\\
&\qquad +  (1 + \alpha^{-1}) e^{- 2 t_1 \lambda_b(A_1)} \| x \|^2 
\\
&\le (1+\alpha)^2 \| e^{t_L A_L} \dots e^{t_3A_3} P_2 P_K^\perp P_1 x \|^2 + (1 + \alpha^{-1}) \| e^{t_L A_L} \dots e^{t_3A_3} \|^2 e^{- 2 t_2 \lambda_b(A_2)}  \| P_2^\perp P_K^\perp P_1 x \|^2\\
&\qquad +  (1 + \alpha^{-1}) e^{- 2 t_1 \lambda_b(A_1)} \| x \|^2
\\
&\le (1+\alpha)^2  \| e^{t_L A_L} \dots e^{t_3A_3} P_K^\perp P_2 P_1 x \|^2 + (1 + \alpha^{-1}) \left( e^{- 2 t_2 \lambda_b(A_2)} +  e^{- 2 t_1 \lambda_b(A_1)} \right) \| x \|^2 
\end{align*}
and by recursion, we finally obtain
\begin{align}
\label{eq:stima}
\| e^{t_L A_L} \dots e^{t_2A_2}e^{t_1A_1}P_K^\perp x \|^2
\le (1+\alpha)^L  \| P_L \dots P_3 P_2 P_1 P_K^\perp x \|^2 + (1 + \alpha^{-1})  \sum_{i=1}^L e^{- 2 t_i \lambda_b(A_i)}  \| x \|^2 
\end{align}
The operator in 
the first term is bounded in norm by $1-\varepsilon$, thanks to Lemma \ref{le:s.1}; therefore, we obtain the estimate
\begin{align*}
\| e^{t_L A_L} \dots e^{t_2A_2}e^{t_1A_1}P_K^\perp x \|^2
\le \left((1+\alpha)^L  (1 - \varepsilon)^2 + (1 + \alpha^{-1}) \sum_{i=1}^L e^{- 2 t_i \lambda_b(A_i)}  \right) \| x \|^2 
\end{align*}
The thesis follows by first taking $\alpha$ small enough such that the first addendum is bounded by $1-2\eta$, then taking $\delta$ large enough such that the second addendum is bounded by $\eta$.
\footnote{%
Let us notice that in formula \eqref{eq:stima}, the only fixed term is $\varepsilon$ from Lemma \ref{le:s.1}.
Thus, let us fix $\eta$ such that
\begin{align*}
\eta < \frac{1 - (1-\varepsilon)^2}{2} = \frac{\varepsilon(2-\varepsilon)}{2} 
\end{align*}
Then, we choose $\alpha$ such that
\begin{align*}
(1+\alpha)^L(1-\varepsilon)^2 = (1-2\eta)
\end{align*}
and, setting $\lambda_+ = \max \{\lambda_+^{(i)},\ :\ i = 1, \dots, L\} < 0$, we may choose
\begin{align*}
\delta > \frac{1}{|\lambda_+|} \log\left(\eta^{-1} L (1 + \alpha^{-1}) \right).
\end{align*}
}%
\end{proof}

Finally, we prove below that in case of a fixed, deterministic clock, the same result follows for arbitrary $\delta > 0$.

\begin{lemma}\label{lem:ineq-not-so-fixed-time}
Given an ensemble $\cK=\{A_1,\ldots,A_N\}$ of closed, densely defined, dissipative operators with compact resolvent that generate analytic strongly continuous semigroups on a Hilbert space $H$, let us consider the associated sequence of operators $(A_{k_1}, \dots, A_{k_L})$ taken from $\cK$.
If  $L\ge N$ and $(k_1, \dots, k_L)$ is a sequence of indices that covers the whole $E = \{1, \dots, N\}$, then for all $\delta>0$
\[
\|P_K^\perp e^{\delta A_{k_L}} \cdots e^{\delta A_{k_1}}\| < 1.
\]
\end{lemma}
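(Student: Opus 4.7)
The plan is to argue by contradiction, exploiting the fact that the operator in question is compact and therefore norm-attaining, and then to use Lemma~\ref{lem:a1}(2) iteratively to force the norming vector into $K$.

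First I would observe that, under the standing assumptions, each $A_{k_j}$ generates an analytic semigroup whose generator has compact resolvent, so $e^{\delta A_{k_j}}$ is a compact operator for every $\delta>0$. Since compact operators form a two-sided ideal, the full product $T_\delta:=e^{\delta A_{k_L}}\cdots e^{\delta A_{k_1}}$ is compact, and so is $P_K^\perp T_\delta$. Moreover, by Remark~\ref{rem:wlog} the projector $P_K$ (and thus $P_K^\perp$) commutes with each $e^{tA_{k_j}}$, so we can freely write $P_K^\perp T_\delta = T_\delta P_K^\perp$. Compactness of $P_K^\perp T_\delta$ on the Hilbert space $H$ guarantees that its norm is attained: there exists a unit vector $x\in H$ with $\|P_K^\perp T_\delta x\| = \|P_K^\perp T_\delta\|$.

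Suppose for contradiction that $\|P_K^\perp T_\delta\|=1$. Using that $P_K^\perp$ and every semigroup operator is a contraction, we obtain the chain
\[
1 = \|P_K^\perp T_\delta x\| \le \|T_\delta x\| \le \|e^{\delta A_{k_{L-1}}}\cdots e^{\delta A_{k_1}} x\|\le\cdots\le \|e^{\delta A_{k_1}}x\|\le\|x\|=1,
\]
so every inequality above is actually an equality. In particular $\|e^{\delta A_{k_1}}x\|=\|x\|$; Lemma~\ref{lem:a1}(2) (applied with $t_0=\delta>0$) then forces $x\in\ker A_{k_1}$, and hence $e^{\delta A_{k_1}}x=x$. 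Substituting this back, $\|e^{\delta A_{k_2}}x\|=\|x\|$, so again by Lemma~\ref{lem:a1}(2) we get $x\in\ker A_{k_2}$. Iterating, $x\in\ker A_{k_j}$ for every $j=1,\dots,L$.

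Since $(k_1,\dots,k_L)$ covers $\{1,\dots,N\}$, this yields $x\in\bigcap_{j=1}^{N}\ker A_j = K$. But then $P_K^\perp x = 0$, and using that $P_K^\perp$ commutes with $T_\delta$,
\[
P_K^\perp T_\delta x = T_\delta P_K^\perp x = 0,
\]
which contradicts $\|P_K^\perp T_\delta x\|=1$. The only expected technical point is checking that Lemma~\ref{lem:a1}(2) applies at each step (which is immediate from the Assumptions~\ref{hpA} and \ref{hpc} on each $A_{k_j}$) and ensuring norm attainment; once these are in place the argument is essentially an inductive peeling-off of the semigroup factors.
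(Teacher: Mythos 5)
Your proposal is correct and follows essentially the same route as the paper's own proof: contradiction, compactness of the product to get a norming vector, the chain of contraction inequalities forcing $\|e^{\delta A_{k_1}}x\|=\|x\|$, Lemma~\ref{lem:a1}(2) to place $x$ in $\ker A_{k_1}$, and recursion until $x\in K$, contradicting $P_K^\perp x=x$. The only cosmetic difference is that you invoke the commutation of $P_K^\perp$ with $T_\delta$ for the final contradiction, whereas the paper uses $e^{\delta A_{k_i}}x=x$ directly; both are valid.
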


\begin{proof}
Let us now prove the inequality by contradiction: because all semigroups as well as the projector $P_K^\perp$ are contractive and hence certainly $\|P_K^\perp e^{\delta A_{k_L}} \cdots e^{\delta A_{k_1}}\| \le 1$, it suffices to assume that 
\[
\|P_K^\perp e^{\delta A_{k_L}} \cdots e^{\delta A_{k_1}}\| =1
;
\] since the product operator is a compact operator, as stated before, there would then exist some $x\in H$, $x\ne 0$, with 
$\|P_K^\perp e^{\delta A_{k_L}} \cdots e^{\delta A_{k_1}}x\| =\|x\|$. Because 
\[
\|P_K^\perp e^{\delta A_{k_L}} \cdots e^{\delta A_{k_1}}x\| \le \|P_K^\perp e^{\delta A_{k_L}} \cdots e^{\delta A_{k_2}}\| \|e^{\delta A_{k_1}}x\| \le \|e^{\delta A_{k_1}}x\|,
\]
it follows that $\|e^{\delta A_{k_1}}x\|=\|x\|$ and hence, by Lemma~\ref{lem:a1}.(2), $x\in \ker A_{k_1}$, i.e., $e^{\delta A_{k_1}}x=x$. Proceeding recursively we see that $x\in \bigcap_{i=1}^L \ker A_{k_i}\subset K$, whence $e^{\delta A_{k_i}}x=x$ for all $i$ and hence
\[
\|x\|=\|P_K^\perp e^{\delta A_{k_L}} \cdots e^{\delta A_{k_1}}x\| =\|P_K^\perp x\|=0,
\]
a contradiction.
\end{proof}

\subsection{Proof of Theorem \ref{t1}}

We need two ingredients in this proof.
First, since the Markov chain $\{X_n\}$ is irreducible, there exists a cycle $\xi = (\xi_0, \xi_1, \dots, \xi_L = \xi_0)$ such that
\begin{itemize}
\item[-] the reached states cover the full set of indexes:
$$\{\xi_0,\ldots, \xi_{L-1}\}=\{1,\ldots,N\};$$
\item[-] the Markov chain follows this path with a strictly positive probability:
$$ p_{\xi_0, \xi_1} \cdots p_{\xi_{L-1}, \xi_{L}}  > 0;$$
\item[-] by a suitable rotation of the indexes, it is always possible to let $\xi_0 = X_0$. 
\end{itemize}

As the second main ingredient, we construct a new stochastic process $\{X'_m\}$ by considering the sequence $\{X_n\}$ divided in {\em blocks} of length $L$
\begin{align*}
X'_m := \left(X_{mL}, \ldots, X_{(m+1)L-1}\right), \qquad m \ge 0.
\end{align*}
By a standard argument, $\{X'_m\}$ is an irreducible Markov chain on the state space $E' = \{1,\ldots,N\}^L$.
Therefore, the state $\xi$ is recurrent and there exists an infinite subsequence $\{m_j\}$ of indices such that $X'_{m_j} = \xi$.
\\
Finally, we take a further subsequence $m_{j_l}$ such that {\em all the waiting times} $\{\tau_{m_{j_l}L}, \dots, \tau_{m_{j_l}(L+1)-1} \}$ are larger than the constant $\delta$ in Lemma \ref{lem:ineq-not-so}.
Thanks to Assumption \ref{hpZ},
also this subsequence diverges to infinity.
\\
Let $\tilde n(t) = \max\{l \ge 0 \,:\, T_{m_{j_l}(L+1)} \le t\}$ be the number of times the sequence $\{X'_n\}$ passes from the state $\xi$ with all the waiting times larger than $\delta$ up to time $t$.
By the above reasoning, $\tilde n(t) \to \infty$ as $t \to \infty$ (although this sequence may diverge very slowly).

Clearly, the number of times the chain $\{X_k\}$ follows the cycle $\xi$
is greater than (or equal to) the number of times $\{X_k'\}$ visits the state $\xi$ and, moreover,
the waiting times are larger than $\delta$ (as described above).
Thus,
\begin{align*}
\|P_K^\perp S(t) \| = \| P_K^\perp \prod_{k=0}^{N(t)} e^{\tau_{k+1} A_{X_k}} \|
\le \prod_{m=0}^{n(t)} \|P_K^\perp e^{\tau_{(m+1)L} A_{X_{(m+1)L-1}}} \dots e^{\tau_{mL+1} A_{X_{mL}}} \|
\end{align*}
where $n(t) = \lfloor N(t)/L \rfloor$ is the number of transitions of the Markov chain $\{X'_m\}$ that are completed up to time $t$; since all the operators on the right hand side have norm bounded by 1, we can further estimate
\begin{align*}
\|P_K^\perp S(t) \| 
\le \prod_{m=0}^{\tilde n(t)} \|P_K^\perp e^{\tau_{(m+1)L} A_{X_{(m+1)L-1}}} \dots e^{\tau_{mL+1} A_{X_{mL}}} \|
\end{align*}
From Lemma \ref{lem:ineq-not-so} we are able to estimate
\begin{align*}
\|P_K^\perp e^{\tau_{(m+1)L} A_{X_{(m+1)L-1}}} \dots e^{\tau_{mL+1} A_{X_{mL}}} \| \le 1-\eta <1
\end{align*}
hence
\begin{align}
\label{e:stethm}
\|{P_K^\perp} S(t)\| \le (1-\eta)^{\tilde n(t)} \xrightarrow{ t \to +\infty }0.
\end{align}

By Remark~\ref{rem:wlog} we can write the random propagator as
$$S(t)={P_K} \, S(t) + {P_K^\perp} \, S(t)={P_K}+ {P_K^\perp} \, S(t)$$
and the thesis
$$\lim_{t \rightarrow + \infty} \|S(t)-P_K\|=0 \qquad \mathbb{P}-\text{a.s.}$$
follows by \eqref{e:stethm}.
\hfill\qed	

\subsection{Proof of Theorem \ref{th-adapt-ADFK}}
As done in the proof of Theorem \ref{t1}, we can write the random propagator as
\begin{equation*}
S(t)=P_KS(t)+P_K^\perp S(t)
\end{equation*}
and show that $\|P_K^\perp S(t)\| \to 0$ as $t \to +\infty$ in order to obtain the thesis.
			
Denote $u(t) := S(t)f$ for all initial data $f$; we can estimate the norm of the vector $P_K^\perp u(t) \in H$ by
\begin{multline}
\label{eqproof-sez2}
||P_K^\perp u(t)||^2-||P_K^\perp f||^2 = \int_{0}^{t} \frac{d}{ds} ||P_K^\perp u(s)||^2 \, {\rm d}s
\\
= 2 \Re\left(\int_{0}^{t} ( \frac{d}{ds}P_K^\perp u(s), \ P_K^\perp u(s) ) \, {\rm d}s\right)
= 2\int_{0}^{t} \Re( A(Z(s))P_K^\perp u(s), \ P_K^\perp u(s) ) \, {\rm d}s,
\end{multline}
where the last equality holds due to Remark \ref{rem:wlog} and because $\frac{d}{ds}$ and $P_K^\perp$ commute.
We split the above integral with respect to the various states of $Z(t)$:
\begin{align*}
||P_K^\perp u(t)||^2-||P_K^\perp f||^2 = \sum_{j=1}^N 2\int_{0}^{t} \uno_{(Z(s) = j)} \, \Re ( A_{X_j} P_K^\perp u(s), \ P_K^\perp u(s) ) \, {\rm d}s,
\end{align*}
and since all the $A_k$'s are dissipative, we have the trivial estimate
\begin{align*}
||P_K^\perp u(t)||^2-||P_K^\perp f||^2 \le 2\int_{0}^{t} \uno_{(Z(s) = 1)} \, \Re ( A_{X_j} P_K^\perp u(s), \ P_K^\perp u(s) ) \, {\rm d}s.
\end{align*}
Now, by Remark \ref{rem:8bis}, the above becomes
\begin{align*}
||P_K^\perp u(t)||^2-||P_K^\perp f||^2 
&\le -2 s_d(A_1) \int_{0}^{t} \uno_{(Z(s) = 1)} \, ( P_K^\perp u(s), \ P_K^\perp u(s) ) \, {\rm d}s 
\\
&= -2 s_d(A_1) \int_{0}^{t} \uno_{(Z(s) = 1)} \, ||P_K^\perp u(s)||^2 \, {\rm d}s.
\end{align*}
By Gronwall's Lemma we deduce that
\begin{equation*}
\|P_K^\perp u(t)\|^2=\|P_k^\perp S(t)f \|^2 \le \|P_K^\perp f\|^2 \ e^{-2 s_d(A_1) \int_{0}^{t} \uno_{(Z( s) = 1)} \, {\rm d}s}.
\end{equation*}
The thesis now follows from Remark \ref{rem:itfllow}: indeed, the integral diverges to $+\infty$ $\bP$-almost surely, hence
$||P_K^\perp S(t)|| \to 0$.
\hfill\qed	

\section{Combinatorial graphs}
\label{sez3}

A simple (finite, undirected) combinatorial graph $\mG = (\mV, \mE)$ is a couple defined by a finite set $\mV$ of vertices $\mv$ and a subset $\mE \subset \mV^{(2)}$ of unordered pairs $\me:=\{\mv,\mw\}$ of elements of $\mV$; such a pair $\me$ is interpreted as the edge connecting the vertices $\mv,\mw$.

Given a simple graph $\mG = (\mV, \mE)$, let us introduce a positive weight function on the set of vertices $\mV$
\begin{align*}
m \colon \mV \to (0,+\infty)
\end{align*}
which induces the scalar product 
\begin{align*}
(f,g)_m:= \sum_{\mv \in \mV} m(\mv)f(\mv)\overline{g(\mv)},\qquad f,g\in \bC^\mV
\end{align*}
on the space $\bC^\mV$ of complex valued functions $f \colon \mV \to \bC$: we denote by $\ell^2_m(\mV)$ the Hilbert space $\bC^\mV$ with respect to $(\cdot,\cdot)_m$.
In addition, let 
\begin{align*}
\mu \colon \mE \to (0,+\infty)
\end{align*}
be a positive weight function on the set of edges $\mE$. We call the 4-tuple $(\mV, \mE, m, \mu)$ a {\em weighted combinatorial graph}. 

\begin{remark}\label{rem:connect}
We stress that each weighted graph is a metric space with respect to the shortest path metric; while the topology does depend on the weights, any two weights define equivalent topologies, and in particular it does not depend on $m,\mu$ whether $\mG$ is connected or not.
\end{remark}

Let us recall the notion of \textit{discrete Laplacian} (or \textit{Laplace--Beltrami matrix}) \textit{$\cL_{m,\mu}$ on a weighted graph $\mG=(\mV,\mE,m,\mu)$}, cf.~\cite[\S~2.1.4]{Mug14} -- or shortly: \textit{weighted Laplacian}. For any vertex $\mv\in\mV$, let $\mE_\mv$ denote the set of all edges having $\mv$ as an endpoint. Then $\cL_{m,\mu}  \colon \ell^2_m(\mV) \to \ell^2_m(\mV)$ {is defined by
\begin{align*}
\cL_{m,\mu} f(\mv) := \frac{1}{m(\mv)}\sum_{\me=\{\mv,\mw\} \in \mE_\mv} \mu(\me) \left( f(\mw)-f(\mv) \right), \qquad \forall\, \mv \in \mV;
\end{align*}
$\cL_{m,\mu}$ reduces to the discrete, negative semi-definite Laplacian if $\mu\equiv1$ and $m\equiv 1$; i.e., $\cL_{1,1}$ is \textit{minus} the Laplacian matrix that is common in the literature~\cite[\S~2.1.4]{Mug14}. Indeed, we stress that we have \textit{not} adopted the usual sign convention of algebraic graph theory, as any such $\mathcal L_{m,\mu}$ is self-adjoint and \textit{negative} semi-definite. More generally, $\mathcal L_{m,\mu}$ satisfies the Assumptions~\ref{hpA} and~\ref{hpc} and it can be shown that $\cL_{m,\mu}$ (and not $-\cL_{m,\mu}$) generates a Markovian semigroup.} 
The associated sesquilinear form $q \colon \ell^2_m(\mV) \times \ell^2_m(\mV) \to \bC $ 
	$$q(f,g)=\sum_{\me=\{\mv,\mw\} \in \mE} \mu(\me) \left(f(\mv)-f(\mw)\right) \overline{\left(g(\mv)-g(\mw)\right)}, \qquad f,g \in \ell^2_m(\mV),	$$
such that
	$$q(f,g)=(\cL_{m,\mu} f,g)_m=(f,\cL_{m,\mu} g)_m, \qquad f,g \in \ell^2_m(\mV);$$
accordingly, its Rayleigh quotient is
\begin{equation}
\label{RQ}
	\frac{(f,\cL_{m,\mu} f)_m}{(f,f)_m} = \frac{q(f,f)}{\|f\|^2_m}=\frac{\sum_{\me=\{\mv,\mw\} \in \mE} \mu(\me) |f(\mv)-f(\mw)|^2}{\|f\|^2_m},\qquad f\ne 0.
\end{equation}
It follows from \eqref{RQ} that $\lambda = 0$ is an eigenvalue of each weighted Laplacian $\cL_{m,\mu}$: the associated eigenfunctions are constant on each connected component of $\mG=(\mV,\mE,m,\mu)$. 
Therefore, it turns out that the null space of $\cL_{m,\mu}$ agrees with the null space of the unweighted Laplacian (on $(\mV,\mE)$) associated with $\mG$.

\subsection{The general model}\label{sec:generalcomb}

Throughout this section we consider a finite collection $\cC$ of graphs. 

\begin{assumption}\label{hp3.1}
 $\cC = \{\mG_1, . . . , \mG_N \}$, where $\mG_1=(\mV,\mE_1,m_1,\mu_1), . . . , \mG_N=(\mV,\mE_N,m_N,\mu_N)$ are simple graphs with same vertex set $\mV$ but possibly different edge sets $\mE_i$, vertex weights $\mu_i$, and edge weights $\mu_i$, $i=1,\ldots,N$.
\end{assumption}
 
The following seems to be natural but not quite standard: we prefer to note it explicitly.

\begin{definition}[Union and intersection of weighted graphs]
\label{de:ungr}
The \emph{union} of $\mG_i=(\mV,\mE_i,m_i,\mu_i)$, $i=1,\ldots,N$,  is the weighted graph $\mG_\cup=(\mV,\mE,m,\mu)$ with set of vertices $\mV$, set of edges $\mE := \bigcup_{i=1}^N \mE_i$, vertex weights
\begin{align*}
m(\mv) := \min_{i=1,\ldots,N} m_i(\mv), \qquad \mv \in \mV,
\end{align*}
and edge weights
\begin{align*}
\mu(\me) := \max_{i=1,\ldots,N} \mu_i(\me), \qquad \me \in \mE.
\end{align*}
Likewise, the \emph{intersection} of $\mG_i=(\mV,\mE_i,m_i,\mu_i)$, $i=1,\ldots,N$,  is the weighted graph
$\mG_\cap=(\mV,\mE,m,\mu)$ with set of vertices $\mV$, set of edges $\mE := \bigcap_{i=1}^N \mE_i$, vertex weights
\begin{align*}
m(\mv) := \max_{i=1,\ldots,N} m_i(\mv), \qquad \mv \in \mV,
\end{align*}
and edge weights
\begin{align*}
\mu(\me) := \min_{i=1,\ldots,N} \mu_i(\me), \qquad \me \in \mE;
\end{align*}
here we set $\mu_i(\me):= 0$ if $\me \not\in \mE_i$.
\end{definition}

In this way, it is possible to study the behavior of the intersection of the null spaces
of the Laplacian operators $\cL_{m_k,\mu_k}(\mG_k)$ associated with the graphs in $\cC$.
This result seems interesting on its own, since it explicitly connects the geometry of the graph with the algebraic property of the Laplacian operator.

\begin{lemma}
\label{lemmanull space}
Given $\mG_1,\ldots,\mG_N$ combinatorial graphs satisfying the Assumption~\ref{hp3.1}, let $\mG$ be their weighted union graph (see Definition~\ref{de:ungr})  and let $\cL_{m,\mu}(\mG)$ be the discrete Laplacian on $\mG$.
Then
\begin{align*}
\ker \cL(\mG) = 
\bigcap_{i=1}^{N} \ker \cL_{m_i,\mu_i}(\mG_i).
\end{align*}
\end{lemma}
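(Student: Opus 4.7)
The plan is to prove the result via the Rayleigh quotient characterization \eqref{RQ}, exploiting the fact that the null space of a weighted Laplacian depends only on the underlying edge set and not on the actual choice of positive vertex and edge weights.

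First I would note that, since $\cL_{m,\mu}(\mG)$ is self-adjoint and negative semi-definite with $q(f,f)=-(\cL_{m,\mu}f,f)_m$, we have $f\in\ker\cL_{m,\mu}(\mG)$ if and only if $q(f,f)=0$. Because \eqref{RQ} exhibits $q(f,f)$ as a sum of non-negative terms $\mu(\me)|f(\mv)-f(\mw)|^2$ with strictly positive coefficients, vanishing of the quadratic form is equivalent to $f(\mv)=f(\mw)$ for every edge $\me=\{\mv,\mw\}\in\mE$. In other words,
\[
\ker\cL_{m,\mu}(\mG)=\bigl\{f\in\bC^\mV : f(\mv)=f(\mw)\text{ for every }\{\mv,\mw\}\in\mE\bigr\},
\]
which is indeed independent of the positive weights $m$ and $\mu$, and coincides with the space of functions that are constant on each connected component of the unweighted graph $(\mV,\mE)$.

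Next I would apply this characterization both to each $\mG_i$ and to the union $\mG_\cup=(\mV,\bigcup_{i=1}^N\mE_i,m,\mu)$ of Definition~\ref{de:ungr}. A function $f\in\bC^\mV$ lies in $\ker\cL_{m,\mu}(\mG_\cup)$ iff $f(\mv)=f(\mw)$ for every $\{\mv,\mw\}\in\bigcup_{i=1}^N\mE_i$, i.e.\ iff, for each $i\in\{1,\ldots,N\}$, $f(\mv)=f(\mw)$ holds for every edge $\{\mv,\mw\}\in\mE_i$. By the characterization above, the latter condition is exactly $f\in\ker\cL_{m_i,\mu_i}(\mG_i)$ for every $i$, which gives the desired equality
\[
\ker\cL_{m,\mu}(\mG_\cup)=\bigcap_{i=1}^N\ker\cL_{m_i,\mu_i}(\mG_i).
\]

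There is no real obstacle here: the only subtle point is the observation that, while the weights $m_i$ and $\mu_i$ genuinely affect the operator $\cL_{m_i,\mu_i}(\mG_i)$ (and in particular its non-zero spectrum), they do \emph{not} affect its kernel, so the specific choice made in Definition~\ref{de:ungr} of taking $\min$ of the vertex weights and $\max$ of the edge weights (with the convention $\mu_i(\me)=0$ outside $\mE_i$) is irrelevant for the statement — what matters is only that the resulting weights be strictly positive on $\mV$ and on the union $\bigcup_i\mE_i$, so that the Rayleigh quotient argument applies verbatim on $\mG_\cup$.
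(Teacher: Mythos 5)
Your proof is correct, but it takes a genuinely different route from the paper's. The paper restricts (by an appeal to the earlier observation that kernels are weight-independent) to unweighted graphs and to $N=2$, decomposes $\mG$, $\mG_1$, $\mG_2$ into connected components, and compares the expansions of a kernel element in the two bases of indicator functions to show it must be constant on each component of the union. You instead use the quadratic-form characterization: for a self-adjoint, semi-definite operator, $f$ lies in the kernel iff the form vanishes, and since $q(f,f)=\sum_{\me=\{\mv,\mw\}}\mu(\me)|f(\mv)-f(\mw)|^2$ is a sum of non-negative terms with strictly positive coefficients, $\ker\cL_{m,\mu}$ is exactly $\{f: f(\mv)=f(\mw)\text{ for all }\{\mv,\mw\}\in\mE\}$; the lemma then reduces to the set-theoretic identity that imposing this condition on $\bigcup_i\mE_i$ is the same as imposing it on each $\mE_i$ separately. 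This is cleaner: it handles arbitrary $N$ and arbitrary positive weights in one stroke, makes transparent why the $\min$/$\max$ conventions in Definition~\ref{de:ungr} are irrelevant (only positivity of the weights matters), and avoids the bookkeeping with component indices $C_1^{(h)}, C_2^{(h)}$. The paper's component-based argument, on the other hand, yields as a by-product the explicit basis $\{\uno_h\}$ of the intersection space, which is what is actually used downstream in Corollary~\ref{Lemmaunione}. One small remark in your favour: you write $q(f,f)=-(\cL_{m,\mu}f,f)_m$, which is the sign consistent with negative semi-definiteness of $\cL_{m,\mu}$ and non-negativity of $q$; the paper's displays around \eqref{RQ} assert $q(f,g)=(\cL_{m,\mu}f,g)_m$, which appears to be a sign slip there, and in any case does not affect the vanishing argument.
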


\begin{proof}
It suffices to work with unweighted graphs; 
for simplicity, moreover, we only prove the case $N=2$.
In general, suppose that $\mG$, $\mG_1$ and $\mG_2$ are written as disjoint unions of connected components:
\begin{align*}
\mG=\bigsqcup_{h=1}^{l} \mG^{(h)},\hspace{0,7cm}\mG_1= \bigsqcup_{j=1}^{m}\mG_1^{(j)},\hspace{0,7cm} \mG_2=\bigsqcup_{k=1}^{n} \mG_{2}^{(k)}.
\end{align*}
Each connected component of the union $G$ can be expressed as
\begin{equation*}
\mG^{(h)}=\bigsqcup_{j \in C_1^{(h)}} \mG_1^{(j)}\hspace{0,4cm}\text{or}\hspace{0,4cm} \mG^{(h)}=\bigsqcup_{k \in C_2^{(h)}} \mG_2^{(k)},\qquad h=1,\ldots,l,
\end{equation*}
where
\begin{align*}
&C_1^{(h)}:= \left\lbrace j \in \{1,\ldots,m\}\hspace{0,1cm}:\hspace{0,1cm}\mG_1^{(j)} \subseteq \mG^{(h)}\right\rbrace,\\
&C_2^{(h)}:= \left\lbrace k \in \{1,\ldots,n\}\hspace{0,1cm}:\hspace{0,1cm} \mG_2^{(k)} \subseteq \mG^{(h)}\right\rbrace.
\end{align*}
		
Any eigenfunction associated with the null eigenvalue of $\cL$ shall be constant on any connected component of $\mG$, hence $\mathcal{B}_{\mG}=\{ \mathbbm{1}_{h}, \ h=1,\ldots,l \}$, where
\begin{equation*}
\forall \, \mv \in \mV: \qquad \mathbbm{1}_h(\mv) := \begin{cases}
1 & \text{if } \mv \in \mG^{(h)},\\
0 & \text{otherwise},
\end{cases}
\end{equation*}
is a basis for $\ker \mathcal{L}(\mG)$. 
Similarly, $\mathcal{B}_1=\{\mathbbm{1}_{1,j}, \ j=1,\ldots,m\}$ and $\mathcal{B}_2=\{\mathbbm{1}_{2,k}, \ k=1,\ldots,n\}$ are a basis of $\ker \mathcal{L}(\mG_1)$ and $\ker \mathcal{L}(\mG_2)$, respectively. 
Hence, we only need to prove that for all $h=1,\ldots,l$, $\mathbbm{1}_h$ is in the intersection of the null spaces in $\mathcal{C}$ and then extend the result to $\ker \mathcal{L}(\mG)$ by linearity. 
In particular, by construction the function $ \mathbbm{1}_h$ will be
\begin{equation*}
\forall \, \mv \in \mV: \qquad \mathbbm{1}_h(\mv) := \begin{cases}
1 & \text{if }\mv \in \mG_1^{(j)}, \ j \in C_1^{(h)},\\
0 & \text{otherwise},
\end{cases}
\end{equation*}
and 
\begin{equation*}
\forall \, \mv \in \mV: \qquad \mathbbm{1}_h (\mv):= \begin{cases}
1 & \text{if }\mv \in \mG_2^{(k)}, \ k \in C_2^{(h)},\\
0 & \text{otherwise,}
\end{cases}
\end{equation*}
thus $\mathbbm{1}_h \in \ker \mathcal{L}(\mG_1) \cap \ker \mathcal{L}(\mG_2),$ in fact it can be written as linear combination of both basis $\mathcal{B}_1$ and $\mathcal{B}_2$ as 
\begin{equation*}
\mathbbm{1}_h = \sum_{j \in C_1^{(h)}} \mathbbm{1}_{1,j} \hspace{0,4cm}\text{or}\hspace{0,4cm}\mathbbm{1}_h=\sum_{k \in C_2^{(h)}} \mathbbm{1}_{2,k}.
\end{equation*}
		
On the other hand, given $f \in \ker \mathcal{L}(\mG_1) \cap \ker \mathcal{L}(\mG_2)$, we have 
\begin{equation}\label{eq1}
f = \alpha_1 \mathbbm{1}_{1,1}+ \cdots + \alpha_m \mathbbm{1}_{1,m}
\end{equation}
and
\begin{equation}
\label{eqqq2}
f = \beta_1 \mathbbm{1}_{2,1}+ \cdots + \beta_n \mathbbm{1}_{2,n},
\end{equation}
where $\mathcal{B}_1$ and $\mathcal{B}_2$ as above. 
Then, comparing the expressions \eqref{eq1} and \eqref{eqqq2}, we get that
\begin{equation*}
\begin{split}
& \alpha_j = \beta_k = c_1, \qquad \forall j \in C_1^{(1)}, \ \forall k \in C_2^{(1)},\\
& \phantom{\alpha = a}\vdots \\
& \alpha_j = \beta_k = c_l, \qquad \forall j \in C_1^{(l)}, \ \forall k \in C_2^{(l)}
\end{split}
\end{equation*}
and $f$ can also be expressed in terms of $\mathcal{B}_\mG$ as
$$f=\sum_{h=1}^{l} c_h \mathbbm{1}_h,$$
thus $f \in \ker \mathcal{L}(\mG).$
\end{proof}

As a corollary, we notice the following result, concerning the relation between the connectedness of the union graph $\mG$ (see Remark~\ref{rem:connect}) and the dimension of the kernel of the (weighted or unweighted) Laplacian operator.

\begin{corollary} \label{Lemmaunione} 
Given $\mG_1,\ldots,\mG_N$ combinatorial graphs satisfying the Assumption~\ref{hp3.1}, let $\mG$ be their weighted union graph. Then
\begin{equation}
\label{e:1Lemmaunione}
	\mG \text{ is connected } \ \Longleftrightarrow \ \bigcap_{i=1}^{N} \ker \cL_{m_i,\mu_i}(\mG_i) = \ \langle \mathbbm{1} \rangle.
\end{equation}
\end{corollary}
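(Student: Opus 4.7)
The plan is to deduce this corollary directly from Lemma~\ref{lemmanull space} combined with the standard characterization of connectedness via the dimension of the kernel of the weighted Laplacian.

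First, I would recall (as noted in Remark~\ref{rem:connect}) that connectedness of $\mG=(\mV,\mE,m,\mu)$ does not depend on the chosen weights, only on the underlying combinatorial structure $(\mV,\mE)$. Next, using the Rayleigh quotient expression~\eqref{RQ} for $\cL_{m,\mu}(\mG)$, a function $f\in\ell^2_m(\mV)$ lies in $\ker \cL_{m,\mu}(\mG)$ if and only if $f(\mv)=f(\mw)$ for every edge $\me=\{\mv,\mw\}\in\mE$, i.e., if and only if $f$ is constant on each connected component of $\mG$. Thus
\[
\dim\ker\cL_{m,\mu}(\mG)=\#\{\text{connected components of }\mG\},
\]
and in particular $\ker\cL_{m,\mu}(\mG)=\langle\mathbbm{1}\rangle$ holds precisely when $\mG$ is connected.

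Having this, the equivalence~\eqref{e:1Lemmaunione} is immediate: by Lemma~\ref{lemmanull space},
\[
\bigcap_{i=1}^{N}\ker\cL_{m_i,\mu_i}(\mG_i)=\ker\cL_{m,\mu}(\mG),
\]
and the right-hand side equals $\langle\mathbbm{1}\rangle$ if and only if $\mG$ is connected, by the observation above. Both implications in~\eqref{e:1Lemmaunione} therefore follow at once.

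There is no real obstacle here: the work has already been done in Lemma~\ref{lemmanull space}, which identifies the intersection of the component kernels with the kernel of the Laplacian on the union graph. The corollary is just the translation of that identity into the geometric language of connectedness, using the well-known fact that the multiplicity of the eigenvalue $0$ of a weighted Laplacian equals the number of connected components. The only minor point worth flagging is the observation recorded in Remark~\ref{rem:connect}, which ensures that the choice of weights $m,\mu$ on the union graph is immaterial for the conclusion.
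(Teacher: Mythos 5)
Your proof is correct and follows the same route the paper intends: Corollary~\ref{Lemmaunione} is deduced by combining Lemma~\ref{lemmanull space} with the standard fact (already recorded after~\eqref{RQ}) that $\ker\cL_{m,\mu}(\mG)$ is spanned by the indicator functions of the connected components, so it equals $\langle\mathbbm{1}\rangle$ precisely when $\mG$ is connected. The remark about independence of the weights is a sensible extra check, but no new ideas are needed beyond what the paper's own argument uses.
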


Partially motivated by Corollary~\ref{Lemmaunione}, with a slight abuse of notation we adopt in the following the notation $\mathcal L_i:=\mathcal L_{m_i,\mu_i}(\mG_i)$, $i=1,\ldots,N$.
Fixed a probability space $(\Omega,\cF,\bP)$, let $(Z(t))_{t \ge 0}$ be a semi-Markov process on the state space $E = \{1,\dots, N\}$ which satisfies Assumption \ref{hpZ}. 
In this section, we shall consider the random Cauchy problem
\begin{equation}\label{REE}
\left\{
\begin{aligned}
\tfrac{du}{dt}(t, \mv) &= \cL_{X_k} u(t,\mv),\qquad &&\mv \in \mV,\ t \in [T_k, T_{k+1}), \ k\in\mathbb N,\\
u(0,x) &= f(\mv), &&\mv \in \mV,
\end{aligned}
\right.
\end{equation}
where $\cL_{X_k}$ is the discrete Laplace operator associated with the currently selected graph $\mG_{X_k}$.
The above equation is also known in the literature as the (random) discrete heat equation.
Here and in the sequel, we neglect the dependence on $\omega$ to simplify the notation.

We can now state our main result in this section. 

\begin{theorem}\label{TEOR1}
Let $(Z(t))_{t\ge 0}$ be a semi-Markov process and $\cC$ be a family of graphs that satisfy the Assumptions \ref{hpZ} and \ref{hp3.1}, respectively.
Then the random propagator $(S(t))_{t\ge 0}$ for the Cauchy problem~\eqref{REE} converges in norm $\mathbb P$-almost surely towards the orthogonal projector ${P_K}$ onto the space
$K = \displaystyle \bigcap_{i=1}^N \ker \cL_i$.
\end{theorem}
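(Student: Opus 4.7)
The plan is to deduce Theorem \ref{TEOR1} as a direct corollary of the abstract Theorem \ref{t1}. The whole task reduces to a check that each discrete Laplacian $\cL_i$ in the ensemble fits the abstract framework. I will proceed in three short steps: fix a Hilbert space structure on $\bC^\mV$, verify Assumptions \ref{hpA} and \ref{hpc} for each $\cL_i$, and invoke Theorem \ref{t1}.

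For the functional-analytic setup I take the common finite-dimensional space $\bC^\mV$ as state space. In the standard case $m_1 = \cdots = m_N =: m$, the natural choice is $H = \ell^2_m(\mV)$, and every $\cL_i$ is symmetric on this common $H$. If the vertex weights $m_i$ differ, all weighted inner products on $\bC^\mV$ are equivalent; the contractivity estimates underlying Lemmas \ref{lem:a1} and \ref{lem:ineq-not-so} can then be carried out in each $\ell^2_{m_i}(\mV)$ separately, and norm convergence of the propagator is insensitive to the eventual choice of reference Hilbert space.

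Assumption \ref{hpA} is then readily verified. Each $\cL_i$ is a matrix on a finite-dimensional space, so automatically closed, densely defined, and the generator of an entire, in particular analytic, semigroup. The Rayleigh quotient \eqref{RQ} shows that $\cL_i$ is self-adjoint and negative semi-definite on $\ell^2_{m_i}(\mV)$, so its spectrum lies in $(-\infty,0]$ and its semigroup is a contraction in $\|\cdot\|_{m_i}$ by the spectral theorem; in particular, $\cL_i$ has no spectral values on $i\mathbb R$ other than possibly $0$. Assumption \ref{hpc} is trivially true in finite dimensions, since every linear operator on $\bC^\mV$ has finite-rank, \emph{a fortiori} compact, resolvent.

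With both abstract assumptions in force, Theorem \ref{t1} applies and yields the pathwise norm convergence of the random propagator $(S(t))_{t\ge 0}$ to the orthogonal projector onto $K = \bigcap_{i=1}^N \ker \cL_i$. The only mildly subtle point in the implementation is the compatibility of inner products when the vertex weights differ across the ensemble, which amounts to a standard equivalence-of-norms argument and carries no geometric content. As a side remark, Lemma \ref{lemmanull space} and Corollary \ref{Lemmaunione}, although not required for the convergence itself, identify $K$ with $\ker \cL(\mG_\cup)$ and thus provide a clean geometric reading of the limit projector in terms of the union graph.
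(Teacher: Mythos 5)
Your overall strategy --- fix the finite-dimensional Hilbert space, check Assumptions \ref{hpA} and \ref{hpc} for the discrete Laplacians, and invoke Theorem \ref{t1} --- is exactly the paper's (implicit) proof: the paper states no separate argument for Theorem \ref{TEOR1}, the verification of the hypotheses being contained in the remarks following the definition of $\cL_{m,\mu}$ in Section \ref{sez3}. When all vertex weights coincide (in particular in the unweighted case), your argument is complete and correct.

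The weak point is your treatment of the case of differing vertex weights $m_i$. Equivalence of the norms $\|\cdot\|_{m_i}$ does let you transfer an already-established convergence statement from one norm to another, but it does not let you run the proof of Theorem \ref{t1}. That proof needs a \emph{single} Hilbert space $H$ on which every operator in the ensemble generates a contraction semigroup and is self-adjoint (or at least dissipative): Lemma \ref{lem:ineq-not-so} and the block decomposition in the proof of Theorem \ref{t1} bound $\|P_K^\perp S(t)\|$ by a product of operator norms computed in one fixed norm, with all but a sparse subsequence of the factors estimated by $1$ via contractivity. If $e^{t\cL_i}$ is contractive only in $\|\cdot\|_{m_i}$, then in a fixed reference norm the best available bound is $\|e^{t\cL_i}\|\le C$ with a constant $C>1$ coming from the norm equivalence, and the resulting infinite product need not tend to zero; ``carrying out the estimates in each $\ell^2_{m_i}(\mV)$ separately'' does not combine into a single estimate, because the product mixes semigroups associated with different inner products. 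The same ambiguity affects the projectors: ``orthogonal projector onto $\ker\cL_i$'' and ``onto $K$'' denote different operators in different inner products, so Lemma \ref{le:s.1} cannot be invoked with projectors orthogonal with respect to inconsistent scalar products. To close the gap you must either restrict to a common vertex weight $m_1=\cdots=m_N$, or supply a genuinely new uniform argument (for instance exploiting that all the semigroups $e^{t\cL_{m_i,\mu_i}}$ are Markovian and hence simultaneously contractive on $\ell^\infty(\mV)$, or verifying dissipativity of every $\cL_i$ with respect to one chosen weight). The paper is itself silent on this point, so the difficulty is inherited rather than introduced; but your proposal explicitly claims to have disposed of it, and the one-line appeal to equivalence of norms does not do so.
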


We can now see that the connectedness of the union graph is a necessary and sufficient condition for the convergence of the random propagator $(S(t))_{t\ge 0}$ towards the orthogonal projector $P_0$ onto the space of constant functions on $\mV$, \ie, the eigenspace $\langle \mathbbm{1}\rangle$ spanned by $\mathbbm{1}$. 
Notice that since $\mathbbm{1}$ is an eigenvector for each $\mathcal L_k$, $P_0$ commutes with each $\cL_k$ and each $e^{t \cL_k}$, $k=1,\ldots,N$, $t\ge 0$.

\begin{corollary}
\label{teoremaConnesso}
Under the assumptions of Theorem~\ref{TEOR1}, $(S(t))_{t\ge 0}$ converges in norm $\bP$-almost surely to $P_0$ if and only if the union graph $\mG$ is connected.
\end{corollary}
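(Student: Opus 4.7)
The plan is to reduce the statement to Theorem~\ref{TEOR1} by identifying the limiting projector $P_K$ with $P_0$ precisely when the union graph is connected. The key fact that makes this immediate is Corollary~\ref{Lemmaunione}, which relates the connectedness of $\mG$ to the dimension and shape of $K=\bigcap_{i=1}^{N}\ker\cL_i$.

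First, I would handle the ``if'' direction. Assume that the union graph $\mG$ is connected. By Corollary~\ref{Lemmaunione}, $K=\langle\mathbbm{1}\rangle$, which coincides with the one-dimensional space onto which $P_0$ orthogonally projects. Hence $P_K=P_0$, and Theorem~\ref{TEOR1} yields
\begin{equation*}
\lim_{t\to\infty}\|S(t)-P_0\|=0\qquad \bP\text{-a.s.}
\end{equation*}

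For the converse, assume $(S(t))_{t\ge 0}$ converges in norm $\bP$-a.s.\ to $P_0$. Theorem~\ref{TEOR1} gives, along the same almost sure event, that $S(t)\to P_K$ in norm. By uniqueness of the norm limit we obtain $P_K=P_0$, hence $K=\operatorname{rg}P_K=\operatorname{rg}P_0=\langle\mathbbm{1}\rangle$. Applying the reverse implication in Corollary~\ref{Lemmaunione} then yields that $\mG$ is connected.

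The argument is essentially a translation lemma, so I do not anticipate any serious obstacle: once Theorem~\ref{TEOR1} is in place, the only subtle point is that $P_0$ actually equals the orthogonal projector onto $\langle\mathbbm{1}\rangle$ in the weighted inner product $(\cdot,\cdot)_m$ used on $\ell^2_m(\mV)$, so that the identification $P_K=P_0\Leftrightarrow K=\langle\mathbbm{1}\rangle$ is valid. This is true because $\mathbbm{1}\in\ker\cL_i$ for every $i$ (every constant function lies in the null space of every weighted Laplacian), and $P_0$ is by definition the orthogonal projector onto the one-dimensional subspace it spans; both implications then follow purely from the characterisation in Corollary~\ref{Lemmaunione}.
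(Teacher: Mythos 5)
Your argument is correct and matches the paper's own proof: both directions reduce to the equivalence $P_K=P_0\Leftrightarrow \mG$ connected from Corollary~\ref{Lemmaunione}, combined with the convergence to $P_K$ given by Theorem~\ref{TEOR1}. The extra care you take with uniqueness of the norm limit and with $P_0$ being orthogonal in the weighted inner product is a welcome but inessential elaboration of the same argument.
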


\begin{proof}
Corollary \ref{Lemmaunione} implies that $P_0 = {P_K}$ if and only if the union graph $\mG$ is connected, 
while Theorem \ref{TEOR1} implies the convergence of $S(t)$ towards ${P_K}$, hence the sufficiency and necessity of the condition.
\end{proof}

In the last part of this section we present two special cases of evolution on combinatorial graphs where we discuss the relation between our result and the existing literature.

\subsection{Connected graphs}

In this section we assume that all the graphs in $\cC$ are connected.
As we have already seen, this assumption is unnecessarily strong if we aim at solving ({\bf P}).
\\
However, we are going to show an interesting link between our problem and the analysis of LCP-sets.
For simplicity, in this section we assume that $\tau_n = 1$ for every $n$, hence $T_n = n$ and $Z(t) = Z(\lfloor t \rfloor) = X_{\lfloor t \rfloor}$.

A set $\cK$ of matrices is said to have the \textit{left-convergent product property}, or simply to be an \textit{LCP set}, if 
for every sequence $\mj = (j_n)_{ n \in \bN}$ the infinite left-product $M_\mj := \displaystyle \prod_{k=0}^\infty M_{j_k}$ converges.
It is known~\cite{ElsKolNeu90} that $\cK$ is an LCP set if $\cK$ is \textit{paracontracting}, meaning that for some matrix norm
\[
Mx\ne x \quad \Rightarrow \quad \|Mx\|<\|x\|\qquad\hbox{ for all $M\in \cK$ and $x\in \bR^d$}	.
\]
The issue of convergence of infinite products of matrices has been finally settled in a fundamental paper by Daubechies and Lagarias: in particular, see~\cite[Thm.~4.1 and Thm.~4.2]{DauLag92} and also the erratum in~\cite{DauLag01}.

\begin{proposition}\cite[Thm.~4.2]{DauLag92}\label{dala2}
Let $\cK$ be a finite set of $d \times d$ matrices. Then the following are equivalent.
\begin{enumerate} 
\item[(a)] $\cK$ is an LCP set whose limit function $\mj \mapsto M_\mj$ is continuous with respect to the sequence topology on $\bS = \{\mj = (j_n)_{n \in \bN}\}$.
\item[(b)] All matrices $M_i$ in $\cK$ have the same eigenspace $E_1$ with respect to the eigenvalue $1$,  this eigenspace is simple for all $M_i$,
 and there exists a vector space $V$ such that $\mathbb C^{d} = E_1 \oplus V$ and such that if $P_V$ is the oblique projector onto $V$ away from $\me_1$, then $P_V \cK P_V$ is an LCP set whose limit function is identically 0.
\end{enumerate}

In particular, if $E_1$ is a 1-dimensional subspace, then the limit function $M$ is the projector onto this space.
\end{proposition}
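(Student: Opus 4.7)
The plan is to prove the two implications separately. For (b) $\Rightarrow$ (a), I would decompose any $v \in \mathbb{C}^d$ as $v = v_1 + v_V$ along $\mathbb{C}^d = E_1 \oplus V$ and track the $E_1$- and $V$-components of $M_{j_n} \cdots M_{j_0} v$ stage by stage. Because each $M_i$ fixes $E_1$ pointwise, the $V$-component after $n+1$ applications is exactly $(P_V M_{j_n} P_V) \cdots (P_V M_{j_0} P_V) v_V$, which tends to $0$ uniformly in $\mj$ by the LCP hypothesis on the compressed family; the $E_1$-component is a series of contributions of the form $P_{E_1} M_{j_k}$ applied to the intermediate (decaying) $V$-components, and one can show it is absolutely summable once the compressed products decay geometrically uniformly on the compact sequence space $\bS$. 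Continuity in $\mj$ is inherited from continuity of each partial sum. The ``in particular'' clause is then immediate: for $\dim E_1 = 1$ with generator $e$ and dual functional $e^*$, the limit $M_\mj v = e^*(v)\, e$ is the rank-one projector onto $E_1$, independent of $\mj$.

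For (a) $\Rightarrow$ (b), I would first feed LCP the constant sequences $\mj = (i,i,i,\ldots)$. Existence of $M_i^\infty := \lim_n M_i^n$ forces $1$ to be a semisimple eigenvalue of $M_i$, all other eigenvalues to lie in the open unit disk, and identifies $M_i^\infty$ as the spectral projector onto $E_1^{(i)} := \ker(M_i - I)$ along the sum $W_i$ of the remaining generalised eigenspaces. The crucial second step is to show that all $E_1^{(i)}$ coincide: given $v \in E_1^{(1)}$, consider the sequence $\mj^{(n)}$ consisting of $n$ initial copies of the index $1$ followed by infinitely many copies of $2$ (with the convention that $M_{j_0}$ is applied first). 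Because $v$ is $M_1$-invariant, $M_{\mj^{(n)}} v = M_2^\infty v$ independently of $n$, while $\mj^{(n)}$ converges pointwise in $\bS$ to $(1,1,1,\ldots)$. Continuity of $\mj \mapsto M_\mj$ then forces $M_2^\infty v = M_1^\infty v = v$, so $v \in E_1^{(2)}$; by symmetry and relabelling of indices, all $E_1^{(i)}$ collapse to a common subspace $E_1$.

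The remaining task, which I expect to be the main obstacle, is to produce a complement $V$ with $\mathbb{C}^d = E_1 \oplus V$ such that $P_V \cK P_V$ is LCP with identically zero limit function. A naive candidate such as $\bigcap_i W_i$ need not have the correct dimension, so $V$ should be built iteratively by adjusting the individual $W_i$ into a common direct complement of $E_1$; this is the linear-algebraic heart of the argument. Once $V$ has been fixed, tracking the $V$-component as in the easy direction reduces the LCP property of the full family to that of the compressed family, while any nonzero accumulation of the compressed products on $V$ could be combined with a nonzero element of $E_1$ to produce two sequences in $\bS$ with the same pointwise limit but distinct operator-valued images of $\mj \mapsto M_\mj$, contradicting the continuity assumed in (a) and forcing the compressed limit to vanish identically.
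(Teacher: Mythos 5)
The paper does not actually prove this proposition: it is quoted verbatim from \cite[Thm.~4.2]{DauLag92}, so your attempt can only be measured against the original argument. Your architecture is the right one — write each $M_i$ in block form $\left(\begin{smallmatrix} I & B_i\\ 0 & C_i\end{smallmatrix}\right)$ with respect to $E_1\oplus V$, and use constant and spliced index sequences for the converse — but two of the steps you treat as routine or as ``the main obstacle'' are, respectively, the real crux and a non-issue, and one claim you add is false. In (b)$\Rightarrow$(a), the convergence of the $E_1$-component requires $\sum_k\|w_k\|<\infty$, where $w_k$ is the $V$-component after $k$ steps; the hypothesis in (b) only gives pointwise convergence of the compressed products to $0$, which does \emph{not} by itself yield summability. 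You need the nontrivial fact that a finite LCP family with identically zero limit has joint spectral radius $<1$, hence uniformly geometric decay of all products; that is precisely \cite[Thm.~4.1]{DauLag92} and rests on a compactness (K\"onig's lemma / Berger--Wang type) argument over $\bS$. Writing ``one can show'' here leaves the technical heart of this implication unproved.

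In (a)$\Rightarrow$(b), by contrast, your worry about constructing $V$ is misplaced: \emph{any} algebraic complement of $E_1$ works, so no ``iterative adjustment of the $W_i$'' is needed. Since every $M_i$ fixes $E_1$ pointwise, each finite product is block upper triangular and its lower-right block is the corresponding product of the $C_i$'s, which therefore converges whenever the full product does; and applying continuity to the sequences $(j_0,\dots,j_n,i,i,\dots)\to\mj$ gives $M_i^\infty M_\mj=M_\mj$, i.e.\ $\operatorname{range}M_\mj\subseteq E_1$, whence $P_VM_\mj P_V=0$ for every complement $V$. This clean two-line argument replaces both your vague construction and your final ``two sequences with the same limit but different images'' contradiction, which as stated is not a violation of continuity. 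Finally, your reading of the ``in particular'' clause is wrong: when $\dim E_1=1$ the limit $M_\mj$ is a rank-one idempotent with range $E_1$, but its kernel (your functional $e^*$) genuinely depends on $\mj$. For $M_1=\left(\begin{smallmatrix}1&1/2\\0&0\end{smallmatrix}\right)$ and $M_2=\left(\begin{smallmatrix}1&0\\0&0\end{smallmatrix}\right)$ one has $M_iM_j=M_j$, so the limit function is $\mj\mapsto M_{j_0}$, continuous but not constant; the limit is ``the'' (orthogonal) projector independent of $\mj$ only under extra structure, e.g.\ self-adjointness, which is what the paper exploits in its applications.
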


Now we can state this result in the setting of combinatorial graphs. 
Under the assumption of connectedness of all graphs, Theorem \ref{t1} states that  $S(t)$ will converge to $P_0$ (the projector on the subspace $\langle \uno \rangle$ of constant functions) no matter which sequence of graphs we follow in \eqref{REE},
thus it provides the same result as in the deterministic case treated in Proposition \ref{dala2}.
We shall give in Lemma~\ref{connectedgraphsprop} an alternative proof to this result, which specializes to the notation of graph theory.

\begin{lemma}
\label{connectedgraphsprop}
Let $\mathcal{C}=\{\mG_1, \ldots, \mG_N\}$ be a finite family of connected graphs and $Z = (X_n, \tau_n=1)$ be an irreducible Markov chain. Then
\begin{equation}
\label{convergenceforallomega}
\lim_{t \rightarrow + \infty} \|S(t)-{P}_0\|=0\qquad \hbox{along any trajectory }\omega \in \Omega
\end{equation}
holds for the random evolution problem \eqref{REE}.
\end{lemma}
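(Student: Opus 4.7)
The plan is to give a much more direct argument than that of Theorem~\ref{t1}: since every graph in $\mathcal C$ is connected, each single semigroup operator $e^{\cL_k}$ is already a strict contraction on $\langle \mathbbm{1}\rangle^\perp$, so no combinatorial covering argument involving cycles of the Markov chain is needed, and the convergence can be established uniformly in $\omega \in \Omega$.

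First, since each $\mG_k$ is connected, Corollary~\ref{Lemmaunione} (applied to the singleton family $\{\mG_k\}$, or directly by inspection of the Rayleigh quotient \eqref{RQ}) gives $\ker \cL_k = \langle \mathbbm{1}\rangle$ for every $k=1,\ldots,N$; hence $K = \langle \mathbbm{1}\rangle$ and $P_K = P_0$. Moreover, each $\cL_k$ is self-adjoint and negative semi-definite with one-dimensional null space, so $\langle \mathbbm{1}\rangle^\perp$ is an invariant subspace for every $e^{s\cL_k}$, and Remark~\ref{rem:8bis} yields $s_d(\cL_k) = -\lambda_2^{(k)} < 0$, where $\lambda_2^{(k)}$ is the spectral gap of $-\cL_k$. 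I would set $\lambda := \min_{k=1,\ldots,N} \lambda_2^{(k)}$, which is strictly positive because the family $\mathcal C$ is finite.

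Next, since $\mathbbm{1}$ is a common eigenvector of all $\cL_k$ with eigenvalue $0$, $P_0$ commutes with every $e^{s\cL_k}$. Hence $P_0 S(t) = P_0$ for every $t \ge 0$ and every $\omega$, and it suffices to show that $\|P_0^\perp S(t)\| \to 0$. Because $\tau_n = 1$ implies $T_n = n$, for $t \in [n, n+1)$ the factorization
\[
P_0^\perp S(t) = \bigl.e^{(t-n)\cL_{X_n}}\bigr|_{\mathbbm{1}^\perp}\, \prod_{k=0}^{n-1} \bigl.e^{\cL_{X_k}}\bigr|_{\mathbbm{1}^\perp}\, P_0^\perp
\]
combined with the spectral estimate $\|e^{s\cL_k}|_{\mathbbm{1}^\perp}\| = e^{-s\lambda_2^{(k)}} \le e^{-s\lambda}$ gives
\[
\|P_0^\perp S(t)\| \le e^{-(t-n)\lambda_2^{(X_n)}} \prod_{k=0}^{n-1} e^{-\lambda_2^{(X_k)}} \le e^{-\lambda t},
\]
a bound that does not depend on the realization $\omega$.

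What is worth emphasizing is that this argument is strictly stronger than the assertion of the lemma: it proves exponential convergence at a deterministic rate $\lambda$, uniformly in $\omega \in \Omega$. No appeal to Lemma~\ref{lem:ineq-not-so} or to the irreducibility of the chain is needed; the mechanism underlying Theorem~\ref{t1} (namely, recurrence to a block that covers the whole state space) is replaced here by the much simpler fact that, when all graphs are connected, every individual $e^{\cL_k}$ contributes a uniform contraction factor on $\langle \mathbbm{1}\rangle^\perp$. The only potential subtlety---and the one I would be most careful about---is to make sure that the restriction $e^{s\cL_k}|_{\mathbbm{1}^\perp}$ is norm-continuous at $s=0$ and that the product of the restrictions really equals the restriction of the product; both are immediate from the $e^{s\cL_k}$-invariance of $\mathbbm{1}^\perp$.
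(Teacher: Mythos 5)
Your proposal is correct and follows essentially the same route as the paper's proof: both decompose $S(t)=P_0+P_0^\perp S(t)$, use that $P_0^\perp$ commutes with each $e^{s\cL_k}$, and bound each factor on $\langle\mathbbm 1\rangle^\perp$ by the spectral-gap estimate $\|P_0^\perp e^{s\cL_k}\|\le e^{s\lambda_2(\cL_k)}$, bypassing Lemma~\ref{lem:ineq-not-so} entirely. The only difference is cosmetic: you make the resulting rate explicit as $e^{-\lambda t}$ with $\lambda=\min_k\lvert\lambda_2(\cL_k)\rvert$, whereas the paper states the same exponential decay in the form $\delta^{k-1}$ with $\delta=\max_i\|P_0^\perp e^{\cL_i}\|<1$.
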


\begin{proof}
In our assumptions, $0$ is a simple eigenvalue of each Laplacian matrix $\mathcal L_k:=\mathcal L(\mG_k)$ with associated eigenvector $\mathbbm{1}$.
The orthogonal operator $P_0^\perp$ is again an orthogonal projector operator
with range $\langle \mathbbm{1} \rangle^{\perp}$.
Notice that $P_0 S(t) = P_0$ because $\rg P_0 = \langle \uno \rangle$ is contained in 
\[
\hbox{ fix }(e^{t \cL_k})_{t\ge 0}:=\{x:\mV\to \mathbb C: e^{t \cL_k}x=x\hbox{ for all }t\ge 0\}
\]
 for every $1 \le k \le N$.
Therefore,
\begin{equation}
\label{primopezzo}
S(t)=P_0S(t) + (I-P_0) S(t) = P_0 + P_0^\perp S(t)\qquad \hbox{for all }t\ge 0, 
\end{equation}
and
we can prove the assertion by showing that
\begin{equation*}
\lim_{t \rightarrow + \infty} \| P_0^\perp S(t) \|= 0.
\end{equation*}
First of all, by definition $P_0^\perp$ is idempotent and commutes with the exponential matrix of every Laplace operator. Hence
\begin{equation*}
P_0^\perp S(t) =P_0^\perp e^{(t-k)\mathcal{L}_{X_k}}e^{\mathcal{L}_{X_{k-1}}} \cdots e^{\mathcal{L}_{X_0}}
 = P_0^\perp e^{(t-k)\mathcal{L}_{X_k}} P_0^\perp e^{\mathcal{L}_{X_{k-1}}} \cdots P_0^\perp e^{\mathcal{L}_{X_0}}.
\end{equation*}
We claim that
\begin{equation}
\label{e.Lemmafuturo}
\text{ each matrix $P_0^\perp e^{\mathcal{L}_i}$, $i=1,\ldots,N$ has norm strictly less than 1.}
\end{equation}
By the finiteness of $\mE$, we denote by 
$$\delta := \max \left\lbrace ||P_0^\perp e^{\mathcal{L}_i}|| \hspace{0,2cm}:\hspace{0,2cm}i=1,\ldots,N \right\rbrace < 1.$$
For all $t>0,$ let $k \in \mathbb{N}$ be such that $k \le t < k+1$. By sub-multiplicativity of the matrix norm we have
\begin{equation*}
\begin{split}
||P_0^\perp S(t)||&=|| P_0^\perp e^{(t-k)\cL_{X_k}} P_0^\perp e^{\mathcal{L}_{X_{k-1}}}\cdots P_0^\perp e^{\mathcal{L}_{X_0}}||\\
&\le ||P_0^\perp e^{(t-k)\mathcal{L}_{X_k}}||\hspace{0,1cm} ||P_0^\perp e^{\mathcal{L}_{X_{k-1}}}|| \cdots ||P_0^\perp e^{\mathcal{L}_{X_0}}||\\
&\le ||P_0^\perp e^{\mathcal{L}_{X_{k-1}}}|| \cdots ||P_0^\perp e^{\mathcal{L}_{X_0}}|| \le \delta^{k-1}.
\end{split}
\end{equation*}
If $t \to +\infty$, then $k \to +\infty$ and we finally get
$$\lim_{t \rightarrow + \infty} \| P_0^\perp S(t)\|=0$$
which implies the thesis.

In order to complete  the proof it remains to show that claim \eqref{e.Lemmafuturo} holds.
We have proved a more refined version of this claim in Lemma \ref{lem:ineq-not-so}; however, in the current setting, the proof is straightforward.
Let $\cL$ denote the Laplacian operator for a connected graph $\mG$.
By a direct computation we have for all $t>0$
\begin{align*}
\|(I-{P_0}) e^{t \cL} f\|^2 = \sum_{k=2}^{d} e^{2 t \lambda_k} ( f, e_k )^2_{\ell^2}
 \le e^{2 t \lambda_{2}} \sum_{k=2}^{d} ( f, e_k )^2_{\ell^2}
= e^{2 t \lambda_{2}} \|(I-{P_0}) f\|^2
 \le e^{2 t \lambda_{2}} \|f\|^2,
\end{align*}
whence $\|(I-{P_0}) e^{\delta \cL} \|^2<e^{2 \delta \lambda_{2}} <1$
since $\lambda_{2} < 0$.
\end{proof} 

\subsection{Randomly switching {\color{black}combinatorial graphs} with non-zero second eigenvalue} \label{sezKdiscrgraph}

The goal here is to 
apply our exponential convergence criteria to combinatorial graphs. Consider the random evolution problem \eqref{REE}; we are going to show the exponential convergence of the random propagator $(S(t))_{t\ge 0}$ provided that the following assumption holds:

\begin{assumption}\label{a-nat}
At least one of the graphs in $\cC$, say $\mG_1$, is a connected graph.
\end{assumption}

It follows that the union graph $\mG$ is connected, too, hence the intersection space $K$ is one-dimensional and $P_K = P_0$ is the projection onto the space of constant functions on $\mV$. 
Moreover, this means that $\lambda_2(\cL_1) < 0$. Therefore, the Assumption \ref{a-K} is satisfied and we can directly apply Theorem \ref{th-adapt-ADFK}.

\begin{corollary}\label{cor-K-combgraphs}
Let $(Z(t))_{t\ge 0}$ be a semi-Markov process and $\cC$ be a family of graphs that satisfy the Assumptions \ref{hpZ} and \ref{hp3.1}, respectively. Let additionally the Assumption~\ref{a-nat} hold. 

Then the random propagator $(S(t))_{t \ge 0}$ converges in norm $\bP$-almost surely exponentially fast towards the orthogonal projector $P_0$ with an exponential rate no lower than
\begin{align*}
\alpha = -\sum_{j=1}^N \lambda_2(\cL_j) \Theta_j
\end{align*} 
that is the average of the eigenvalues  $\lambda_{2}(\cL_j)$ with respect to the fraction of time $\Theta_j$ spent by the process $Z(t)$ in the various states.
\footnote{it is possible to explicitly compute $\Theta_1$ in terms of the invariant distribution $\rho = (\rho_1, \dots, \rho_N)$ associated to the embedded Markov chain $X$ and the expected values of the jump times for the different states $\mu_j = \bE^{j}[\tau_1]$ by the formula
\begin{align*}
\displaystyle \Theta_j = \frac{\rho_j \mu_j}{\sum_{l=1}^N \rho_l \mu_l}.
\end{align*}
}
\end{corollary}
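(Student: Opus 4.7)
The plan is to apply Theorem~\ref{th-adapt-ADFK} after verifying its hypotheses, and then to sharpen the Gronwall step in its proof so that every state contributes to the decay rate.

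The verification is immediate: every $\cL_j$ acts on a finite-dimensional Hilbert space and is self-adjoint negative semi-definite, so Assumptions~\ref{hpA} and~\ref{hpc} hold; Assumption~\ref{a-nat} gives $\ker\cL_1=\langle\uno\rangle$, and since $\mE_1\subset\mE$ the union graph $\mG_\cup$ is connected too, whence Corollary~\ref{Lemmaunione} yields $K=\bigcap_j\ker\cL_j=\langle\uno\rangle=\ker\cL_1$. Thus Assumption~\ref{a-K} holds with $A_1=\cL_1$ and $P_K=P_0$, and Theorem~\ref{th-adapt-ADFK} already delivers $\bP$-a.s.\ exponential norm convergence $S(t)\to P_0$ with rate at least $-\lambda_2(\cL_1)\Theta_1$.

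To upgrade the rate to $\alpha=-\sum_j \lambda_2(\cL_j)\Theta_j$, I would rerun the energy argument of Theorem~\ref{th-adapt-ADFK} but replace the pure dissipativity bound $(\cL_j w,w)\le 0$ used there by the Rayleigh quotient bound available because each $\cL_j$ is self-adjoint and satisfies $K\subset\ker\cL_j$. Indeed $\cL_j$ leaves $K^\perp$ invariant, so its restriction $\cL_j|_{K^\perp}$ is self-adjoint negative semi-definite, and its largest eigenvalue $\lambda_2(\cL_j)$ (strictly negative when $\mG_j$ is connected, equal to $0$ otherwise) satisfies
\[
(\cL_j w,w)\le \lambda_2(\cL_j)\|w\|^2, \qquad w\in K^\perp.
\]
Applied to $v(t):=P_0^\perp S(t)f\in K^\perp$ on each interval on which $Z$ is constant, this yields $\frac{d}{dt}\|v(t)\|^2\le 2\lambda_2(\cL_{Z(t)})\|v(t)\|^2$, and Gronwall's lemma integrates it to
\[
\|v(t)\|^2 \le \|P_0^\perp f\|^2 \exp\!\left(2\sum_{j=1}^N \lambda_2(\cL_j)\int_0^t \uno_{(Z(s)=j)}\,{\rm d}s\right).
\]

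Finally, Remark~\ref{rem:itfllow} supplies $\frac{1}{t}\int_0^t \uno_{(Z(s)=j)}\,{\rm d}s\to \Theta_j$ $\bP$-a.s., so after passing to the finite sum the exponent is asymptotic to $-2\alpha t$; consequently $\|P_0^\perp S(t)\|\le e^{-(\alpha-\eps)t}$ for every $\eps>0$ and every $t$ large enough, $\bP$-a.s., and the decomposition $S(t)-P_0=P_0^\perp S(t)$ yields the claim. The main subtlety is the interpretation of $\lambda_2(\cL_j)$ when $\mG_j$ is disconnected: the reading compatible with the stated formula is ``largest eigenvalue of $\cL_j|_{K^\perp}$'', which equals $0$ in that case, so only the connected graphs in $\cC$ actually contribute to $\alpha$, each weighted by its sojourn fraction $\Theta_j$.
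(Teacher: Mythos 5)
Your proof is correct and follows essentially the same route as the paper: apply Theorem~\ref{th-adapt-ADFK} after checking Assumption~\ref{a-K} via connectedness of the union graph, then recover the improved rate $-\sum_j\lambda_2(\cL_j)\Theta_j$ by averaging the per-state spectral gaps along the trajectory of $Z$. In fact you supply a detail the paper leaves implicit, namely that the Gronwall step must be rerun with the Rayleigh-quotient bound $(\cL_j w,w)\le\lambda_2(\cL_j)\|w\|^2$ on $K^\perp$ for \emph{each} state $j$ (with $\lambda_2(\cL_j)=0$ when $\mG_j$ is disconnected), rather than keeping only the $j=1$ term as in the proof of Theorem~\ref{th-adapt-ADFK}.
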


\begin{proof}
The assertion follows from Theorem \ref{th-adapt-ADFK}. Notice that the exponential rate can be computed by
\begin{align*}
-\frac{1}{t} \int_0^t \lambda_2(\cL_{Z(s)}) \, {\rm d}s = -\sum_{j=1}^N \frac{1}{t} \int_0^t \uno_{(Z(s) = \mG_j)}  \, {\rm d}s \, \lambda_2(\cL_{j}) 
\end{align*}
which converges, as $t \to \infty$, to (compare Remark \ref{rem:itfllow})
\begin{align*}
\alpha = -\sum_{j=1}^N \Theta_j \lambda_2(\cL_{j}) = -\sum_{j=1}^N \lambda_2(\cL_{j}) \frac{\rho_j \mu_j}{\sum_{l=1}^N \rho_l \mu_l}.
\end{align*}
This concludes the proof.
\end{proof}

\begin{remark}\label{rem:unioninters}
Adapting the proof of~\cite[Cor.~3.2]{Fie73} (where the convention is adopted that $\mathcal L$ is \textit{positive} semi-definite)  we see that 
each of the discrete Laplacians $\cL_k$ on the weighted combinatorial graph $\mG_k$ has second largest eigenvalue $\lambda_2(\cL_k) := \lambda_2(\mG_k)\in [\lambda_2(\mG_\cup),\lambda_2(\mG_\cap)]$, where $\mG_\cup,\mG_\cap$ are the union and intersection graph introduced in Lemma \ref{lemmanull space}, respectively:
therefore we conclude that the convergence to equilibrium for the randomly switching problem is not faster (resp., not slower) than in the case of the heat equation on  $\mG_\cup$ (resp., on $\mG_\cap$; observe that $\mG_\cap$ may however be disconnected, and hence $\lambda_2(\mG_\cap)$ may vanish, even if all $\mG_k$ are connected).

Estimates on the rate of convergence to equilibrium of the random propagator are readily available: it is well-known that, for a generic unweighted connected graph $\mG$, $-|\mV|\le \lambda_2(\mG)\le -2(1-\cos \frac{\pi}{|\mV|})$, where the second inequality is an equality if and only if $\mG$ is a path graph, see~\cite[3.10 and 4.3]{Fie73}.
It follows that $\lambda_2(\mathcal L_k)\in [-|\mV|,-2(1-\cos \frac{\pi}{|\mV|})]$ if in particular $\mG_\cap$ is connected; this gives an estimate on the convergence rate in Corollary~\ref{cor-K-combgraphs}.
\end{remark}

\section{Metric graphs}
\label{sez5}
In this section we discuss the application of Theorem \ref{t1} to  finite {\em metric graphs}. 
Roughly speaking, metric graphs are usual graphs (as known from discrete mathematics) whose edges are identified with real intervals -- in this case, finitely many interval of finite length; loops and multiple edges between vertices are allowed. While this casual explanation is usually sufficient~\cite{BerKuc13,Mug14}, for our purposes we will need a more formal definition. We are going to follow the approach and formalism in~\cite{Mug19}.

Let $\mE$ be a finite set. Given some $(\ell_\me)_{\me\in\mE}\subset (0,\infty)$, we consider the  disjoint union
\[
\mathcal E:=\bigsqcup\limits_{\me\in\mE} [0,\ell_\me]\ :
\]
we adopt the usual notation $(x,\me)$ for the element of $\mathcal E$ with $x\in [0,\ell_\me]$ and $\me\in\mE$.

Consider the set 
\begin{align*}
\mathcal V:=\bigsqcup\limits_{\me\in\mE}\{0,\ell_\me\}
\end{align*}
of endpoints of $\mathcal E$. 
Given any equivalence relation $\equiv$ on $\mathcal V$, we extend it to an equivalence relation on $\mathcal E$ as follows: two elements $(x_1,\me_1),(x_2,\me_2)\in \mathcal E$ belong to the same equivalence class if and only if $(x_1,\me_1)=(x_2,\me_2)$ or else $(x_1,\me_1),(x_2,\me_2)\in \mathcal V$ and $(x_1,\me_1)\equiv(x_2,\me_2)$; we denote this equivalence relation on $\mathcal E$ again by $\equiv$ and we call $\mathcal G:=\faktor{\mathcal E}{\equiv}$ a \textit{metric graph} and $\mathcal E$, $\mV:=\faktor{\mathcal V}{\equiv}$ its \textit{set of edges} and \textit{of vertices}, respectively. So, a vertex $\mv\in \mV$ is by definition an equivalence class consisting of boundary elements from $\mathcal E$, like $(0,\me)$ or $(\ell_\mf,\mf)$.

Two edges $\me,\mf\in \mE$ are said to be \textit{adjacent} if one endpoint of $\me$ and one endpoint of $\mf$ lie in the same equivalence class $\mv\in\mV$ (i.e., if $\me,\mf$ share an endpoint, up to identification by $\equiv$); in this case we write $\me\sim \mf$. Also, two vertices $\mv,\mw\in \mV$ are said to be \textit{adjacent} if there exists some (not necessarily unique) $\me\in\mE$ such that $\{x,y\}=\{0,\ell_\me\}$ for  representatives $x$ of $\mv$ and $y$ of $\mw$ (i.e., if there is an edge whose endpoints are $\mv,\mw$, up to identification by $\equiv$); in this case we write $\mv\sim \mw$. In either case, with an abuse of notation we also write $\mv\sim \me$.

Let us stress that by definition a metric graph is uniquely determined by a family $(\ell_\me)_{\me\in \mE}$ and an equivalence relation on $\mathcal V$; however, its metric structure is independent on the orientation of the edges!

As a quotient of metric spaces, any metric graph is a metric space in its own right with respect to the canonical quotient metric defined by
\[
d_{\mathcal G}(\xi,\theta):=\inf \sum_{i=1}^k d_{\mathcal E}(\xi_i,\theta_i),\qquad \xi,\theta\in \mathcal G,
\]
where the infimum is taken over all $k\in \mathbb N$ and all pairs of $k$-tuples $(\xi_1,\ldots, \xi_k)$ and $(\theta_1,\ldots, \theta_k)$ with $\xi=\xi_1$, $\theta=\theta_k$, and $\theta_i\sim \xi_{i+1}$ for all $i=1,\ldots,k-1$, \cite[Def.~3.1.12]{BurBurIva01}, where $\sim$ denotes the adjacency relation already introduced in Section~\ref{sez3}. We call $d_{\mathcal G}$ the \textit{path metric} of $\mathcal G$. A metric graph  is said to be \textit{connected} if the path metric  doesn't attain the value $\infty$; in other words, if any two points of $\mathcal G$ can be linked by a path. 
Along with this metric structure there is a natural measure induced by the Lebesgue measure on each interval; accordingly, we can introduce the spaces 
\begin{align*}
C(\mathcal G)\quad\hbox{and}\quad L^2(\mathcal G)
\end{align*}
as well as
\begin{align*}
H^1(\mathcal G):=\{f\in L^2(\mathcal G)\cap C(\mathcal G):f'\in L^2(\mathcal G)\}.
\end{align*}

Again, these definitions do not depend on the orientation of the metric graph; but the notation
\[
f(\xi):=f_\me(x):=f\big((x,\me)\big),\qquad \xi:=(x,\me),
\]
does.

On the graph $\cG$ we aim to introduce a differential operator acting as the second derivative on the functions $f_j(x)$ on every edge $\me_j$; and possibly more general operators of the form 
\[
A_{\max}:=f\mapsto \frac{d}{dx}\left( p\frac{df}{dx}\right)
\]
for some elliptic coefficient $p\ge p_0>0$ of class $W^{1,\infty}$, $p_0\in \mathbb R$.
While it is natural to require that $f_\me \in H^2(0,\ell_\me)$ for every edge $\me$, taking $\bigoplus_{\me\in\mE} H^2(0,\ell_\me)$ as domain only defines an operator acting on functions on $L^2(\mathcal E)$: this is not sufficient in order to define a self-adjoint operator and suitable boundary conditions shall thus be imposed in order for $A_{\max}$ to satisfy the Assumption \ref{hpA}.

Each realization of the elliptic operator $A$ we are interested in is equipped with \textit{natural vertex conditions}: for each element $u$ in its domain
\begin{itemize}
\item $u\in C(\mathcal G)$, and in particular
\begin{equation}
\tag{Cc}
\forall \, \mv \in \mV:\hspace{0,6cm}u_\me(\mv)=u_\mf(\mv), \qquad \text{whenever $\mv \in \me \cap \mf$;} 
\end{equation}
i.e., $u$ is continuous across vertices;
\item $u$ satisfies the \textit{Kirchhoff condition} at each vertex, namely
\begin{equation}
\tag{Kc}
\forall \, \mv \in \mV:\qquad
\sum_{\substack{\me\in\mE\\ (0,\me)\in \mv}} p_{\me}(\mv) u'_\me(\mv)=
\sum_{\substack{\mf\in\mE\\ (\ell_{\mf},\mf)\in \mv}} p_{\mf}(\mv) u'_\mf(\mv),
\end{equation}
i.e., the weighted sum of the inflows equals the weighted sum of the outflows.
\end{itemize}
(Observe that in any vertex with degree 1 the latter becomes a Neumann boundary condition; and that the case $p \equiv 1$ defines the usual Laplacian $\Delta$ with natural vertex conditions on the metric graph $\mathcal G$.)

We can now define  the operator $A$ with natural vertex conditions 
 on $\mathcal{G}$, \ie\
\begin{equation}\label{eq:Ap}
\begin{split}
(A u)_\me(x) &:= (p_{\me}(x) u_\me'(x))',\\
D(A) &:= \left\{u \in C(\cG)\cap \bigoplus_{\me\in\mE} H^2(0,\ell_\me):\ \text{$u$ satisfies (Kc)}\right\}.
\end{split}
\end{equation}

Let us summarize the main results we need in our construction for the operator $A$ with natural vertex conditions. They are part of a general, well-established theory, see \eg\ \cite{Mug14}.

\begin{proposition}\label{prop:lapl-mg-basic}
The operator $A$ with natural vertex conditions on $H = L^2(\cG)$ 
is densely defined, closed, self-adjoint, and negative semi-definite; it has compact resolvent.
\end{proposition}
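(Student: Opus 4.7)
My plan is to prove Proposition \ref{prop:lapl-mg-basic} via the theory of quadratic forms, which is the most efficient route when the operator in question is expected to be self-adjoint.

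First I would introduce the symmetric sesquilinear form
\[
a(u,v):=\int_{\mathcal G} p(\xi)\,u'(\xi)\,\overline{v'(\xi)}\,d\xi,\qquad D(a):=H^1(\mathcal G),
\]
where the integral is understood as the sum over the edges. Because $\mathcal G$ is a finite metric graph, $H^1(\mathcal G)$ is densely contained in $L^2(\mathcal G)$ (test functions supported in the interior of each edge are dense), hence $a$ is densely defined. Since $p \ge p_0>0$ and $p\in W^{1,\infty}$, the form norm $\|u\|_a:=(a(u,u)+\|u\|_{L^2}^2)^{1/2}$ is equivalent to the natural $H^1(\mathcal G)$-norm; as $H^1(\mathcal G)$ is complete with respect to this latter norm (continuity across vertices is a closed condition in $\bigoplus_{\me\in\mE}H^1(0,\ell_\me)$, being defined by finitely many point evaluations via the trace), the form $a$ is closed. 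Symmetry and non-negativity are immediate from the definition.

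Next, by Kato's first representation theorem, there exists a unique self-adjoint, non-negative operator $B$ on $L^2(\mathcal G)$ with $D(B)\subset D(a)$ such that
\[
a(u,v)=(Bu,v)_{L^2(\mathcal G)}\qquad \text{for all }u\in D(B),\,v\in D(a).
\]
Setting $A:=-B$ will give the claimed self-adjointness, negative semi-definiteness, and density of the domain. The step that requires a careful argument is to identify $D(B)$ with the prescribed $D(A)$ given in~\eqref{eq:Ap}. For $u\in D(B)$ one tests $a$ against functions $v\in H^1_0(0,\ell_\me)$ on a single edge, vanishing outside that edge: this shows first that $p_\me u'_\me\in H^1(0,\ell_\me)$ and $(p_\me u'_\me)'=-(Bu)_\me$ in $L^2(0,\ell_\me)$, so in particular $u_\me\in H^2(0,\ell_\me)$. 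Integration by parts against a general $v\in H^1(\mathcal G)$ then leaves a boundary contribution
\[
\sum_{\mv\in\mV}\Bigl(\sum_{(\ell_\mf,\mf)\in\mv}p_\mf(\mv)u'_\mf(\mv)-\sum_{(0,\me)\in\mv}p_\me(\mv)u'_\me(\mv)\Bigr)\,v(\mv),
\]
where I have used the continuity (Cc) of $v$ at each vertex to factor out the common value $v(\mv)$. Since $v(\mv)$ can be prescribed arbitrarily (e.g.\ by hat-like test functions), this forces the Kirchhoff condition (Kc), so $D(B)\subseteq D(A)$. The converse inclusion is the same calculation read backwards. The main obstacle here is keeping careful track of the orientation-dependent signs so that the vertex boundary term regroups correctly into (Kc) for vertices of arbitrary degree, and not merely for degree-two junctions.

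Finally, for the compact resolvent: since $\mathcal G$ is compact (it is a finite union of bounded intervals with identifications), on each edge the Rellich embedding $H^1(0,\ell_\me)\hookrightarrow L^2(0,\ell_\me)$ is compact. A finite direct sum of compact embeddings is compact, and $H^1(\mathcal G)$ is a closed subspace of $\bigoplus_{\me\in\mE}H^1(0,\ell_\me)$; hence $H^1(\mathcal G)\hookrightarrow L^2(\mathcal G)$ is compact, which by the general form-theoretic criterion (see e.g.~\cite[Prop.~10.9]{Sch12} or its analogues) guarantees that $(I-A)^{-1}=(I+B)^{-1}$ is compact on $L^2(\mathcal G)$. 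This completes the verification of all the asserted properties.
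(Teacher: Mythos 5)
Your argument is correct and is precisely the standard form-method proof that the paper itself does not spell out but delegates to the literature (the proposition is stated with a reference to \cite{Mug14}, where exactly this route --- closed symmetric form on $H^1(\cG)$, Kato's representation theorem, domain identification via integration by parts yielding (Kc), and compact resolvent from the Rellich embedding edgewise --- is carried out). All steps check out, including the key point that continuity across vertices makes $H^1(\cG)$ a closed subspace of $\bigoplus_{\me\in\mE}H^1(0,\ell_\me)$ and that the vertex boundary terms regroup into the Kirchhoff condition because $v(\mv)$ can be prescribed arbitrarily.
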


Thus, $A$ generates a contractive strongly continuous semigroup, denoted by $(e^{t A})_{t \ge 0}$. Hence the abstract Cauchy problem
\begin{equation}
\label{ACP}
\begin{cases}
\tfrac{d}{dt}u(t) = A u(t), &t > 0,\\
u(0)=f,
\end{cases}
\end{equation}
is well-posed: for every $f \in L^2(\cG)$ there exists a unique mild solution given by
$$u(t):=e^{t A}f, \qquad \forall t \ge 0.$$
Moreover, continuous dependence on the initial data holds. Because $A$ is self-adjoint and hence the semigroup is analytic, the solution $u$ is for all $f\in L^2(\mathcal G)$ of class  $C^1((0,\infty);L^2(\mathcal G))\cap C((0,\infty);D(A))$.

By Proposition~\ref{prop:lapl-mg-basic}, the spectrum of $A$ consists of negative eigenvalues of finite multiplicity and the spectral radius satisfies $s(A) = 0 \in \sigma(A)$.
The study of the complete spectrum is still an open problem: actually, only in few cases it is fully determined and in general just some upper and lower bounds on the eigenvalues are known. 
In this work, we are going to emphasize the following property of $\sigma(A)$, see \cite[Theorem 4.3]{KraMugSik07}.

\begin{proposition}\label{propositionbasisnull space}
Let $\mathcal{G} $ be a finite metric graph and denote by $\mathcal G^{(1)},\ldots,\mathcal G^{(l)}$ its connected components. Then, the multiplicity of $0$ as eigenvalue of the operator $A$ with natural vertex conditions is $l$. In particular, the piecewise constant functions $\left\lbrace \mathbbm{1}_h\right\rbrace _{h=1}^{l}$, where
\begin{equation}
\label{orthonormalbasker}
\mathbbm{1}_h(x)=\begin{cases}
1 & \text{if }x \in \mathcal G^{(h)},\\
0 & \text{otherwise,}
\end{cases}
\end{equation}
for all $h=1,\ldots,l$, form a basis of $\ker A$.
\end{proposition}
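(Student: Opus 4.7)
The plan is to prove the two obvious inclusions: that each $\mathbbm{1}_h$ lies in $\ker A$, and that every element of $\ker A$ is a linear combination of the $\mathbbm{1}_h$'s. Linear independence is then immediate from the fact that the supports $\mathcal G^{(h)}$ are pairwise disjoint, so that the count $\dim \ker A = l$ will follow.

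First I would check directly that $\mathbbm{1}_h \in D(A)$ for each $h = 1,\ldots,l$. On each edge $\me$, the function $(\mathbbm{1}_h)_\me$ is identically $0$ or identically $1$, hence of class $H^2(0,\ell_\me)$; continuity across vertices (Cc) holds because adjacent edges are either both in $\mathcal G^{(h)}$ or both outside it; the Kirchhoff condition (Kc) is trivial since each $u'_\me(\mv) = 0$. Applying $A$ edge-by-edge gives $A\mathbbm{1}_h = 0$, so $\langle \mathbbm{1}_1,\ldots,\mathbbm{1}_l\rangle \subseteq \ker A$.

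For the reverse inclusion, I would use the sesquilinear form associated with $A$. Given $u \in D(A)$, integration by parts on each edge together with (Cc) and (Kc) yields
\begin{equation*}
(Au, u)_{L^2(\mathcal G)} = \sum_{\me\in\mE} \int_0^{\ell_\me} (p_\me u_\me')' \overline{u_\me}\, dx = -\sum_{\me\in\mE} \int_0^{\ell_\me} p_\me |u_\me'|^2\, dx,
\end{equation*}
where the boundary terms telescope to zero: the contributions at each vertex $\mv$ assemble into the Kirchhoff balance, and thanks to (Cc) the common vertex value of $\overline{u}$ factors out of that balance. Now if $u\in\ker A$, the left-hand side vanishes, and since $p_\me(x) \ge p_0 > 0$ we conclude that $u'_\me \equiv 0$ on every edge $\me$, i.e., $u$ is constant on each edge.

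The last step is then to promote ``constant on each edge'' to ``constant on each connected component'', which is immediate from (Cc): if $\me \sim \mf$ share a vertex $\mv$, continuity forces $u_\me \equiv u_\mf$; iterating this through any path in $\mathcal G^{(h)}$ shows $u$ takes a single value $c_h$ on $\mathcal G^{(h)}$, so $u = \sum_{h=1}^l c_h \mathbbm{1}_h$. The main obstacle I anticipate is bookkeeping in the integration-by-parts identity, particularly making sure that boundary contributions at each $\mv\in\mV$ are correctly grouped according to whether $\me$ meets $\mv$ at $0$ or at $\ell_\me$, so that (Kc) in the form stated in the excerpt applies with the right signs; once that is written out carefully, everything else is routine.
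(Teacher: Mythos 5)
Your proof is correct. Note that the paper itself offers no proof of this proposition --- it is quoted from \cite[Thm.~4.3]{KraMugSik07} --- so there is no in-text argument to compare against; your reconstruction is the standard one: verify directly that each $\mathbbm{1}_h$ lies in $D(A)$ with $A\mathbbm{1}_h=0$, then use integration by parts (with the boundary terms cancelling at each vertex via (Cc) and (Kc)) to get $(Au,u)=-\sum_\me\int p_\me|u_\me'|^2$, whence $u\in\ker A$ forces $u'\equiv 0$ edgewise and continuity propagates the constant over each connected component. All the steps you flag as requiring care (the grouping of boundary contributions at a vertex according to whether the edge meets it at $0$ or at $\ell_\me$, and linear independence from disjointness of supports) do go through, and self-adjointness of $A$ (Proposition~\ref{prop:lapl-mg-basic}) ensures that the dimension of $\ker A$ is indeed the multiplicity of the eigenvalue $0$.
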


\subsection{A motivating example}\label{sez:a-mot-ex}

Let us study on the interval $[0,2]$ the heat equation
\[
\left\{
\begin{aligned}
\tfrac{\partial u}{\partial t}(t,x)&=\tfrac{\partial^2 u}{\partial x^2}u(t,x), \qquad  &&t\ge 0,\ x \in [0,2],\\
u(0,x)&= u_0(x), \qquad\qquad  &&x\in [0,2],
\end{aligned}
\right.
\]
where $u_0 \in L^2(0,2)$.
In particular, we are going to analyze two different and well-known boundary value problems: in one case, we  impose two Neumann conditions at $x=0$ and $x=2$, whereas the second setting keeps the same constraints at the boundaries, plus one additional Neumann condition at the middle point $x=1$.\\

\begin{figure}[h]
\begin{center}
\tikzset{every picture/.style={line width=0.75pt}} 
\begin{tikzpicture}[x=0.5pt,y=0.5pt,yscale=-1,xscale=1]
\draw    (320,80) -- (400,80) ;
\draw  [fill={rgb, 255:red, 74; green, 74; blue, 74 }  ,fill opacity=1 ] (237.96,80) .. controls (237.96,78.69) and (239.02,77.63) .. (240.33,77.63) .. controls (241.65,77.63) and (242.71,78.69) .. (242.71,80) .. controls (242.71,81.31) and (241.65,82.38) .. (240.33,82.38) .. controls (239.02,82.38) and (237.96,81.31) .. (237.96,80) -- cycle ;
\draw    (240.33,80) -- (320,80) ;
\draw  [fill={rgb, 255:red, 74; green, 74; blue, 74 }  ,fill opacity=1 ] (397.63,80) .. controls (397.63,78.69) and (398.69,77.63) .. (400,77.63) .. controls (401.31,77.63) and (402.38,78.69) .. (402.38,80) .. controls (402.38,81.31) and (401.31,82.38) .. (400,82.38) .. controls (398.69,82.38) and (397.63,81.31) .. (397.63,80) -- cycle ;
\draw  [fill={rgb, 255:red, 74; green, 74; blue, 74 }  ,fill opacity=1 ] (317.63,80) .. controls (317.63,78.69) and (318.69,77.63) .. (320,77.63) .. controls (321.31,77.63) and (322.38,78.69) .. (322.38,80) .. controls (322.38,81.31) and (321.31,82.38) .. (320,82.38) .. controls (318.69,82.38) and (317.63,81.31) .. (317.63,80) -- cycle ;
\draw (281,66) node [scale=1.2]  {$e_{1}$};
\draw (360.5,66) node [scale=1.2]  {$e_{2}$};
\draw (421,47) node [scale=1.44]  {$\mathcal{G}_1$};
\end{tikzpicture}
\caption{Model A}
\end{center}
\end{figure}

Model A describes the evolution of the heat equation on $[0,2]$ with Neumann boundary conditions in 0 and 2. Formally, however, we consider $[0,2]$ as the graph $\cG_1$ with $\mV = \{0,1,2\}$ and edges $\me_1 = [0,1]$ and $e_2=[1,2]$.

The evolution is thus described by the Laplace operator $\Delta_1$ given by
\begin{align*}
\begin{split}
D(\Delta_1) &= \{u = (u_1, u_2)\,:\, u_i \in H^2(0,1),\ i=1,2, \\
&\qquad\qquad\qquad u_1'(0) = u_2'(2) = 0,\ u_1(1) = u_2(1),\ u_1'(1) - u_2'(1) = 0\},\\
\Delta_1 u&=\frac{d^2 u}{dx^2}.
\end{split}
\end{align*}
The spectrum of $\Delta_1$ clearly agrees with that of the Laplacian with Neumann conditions on $[0,2]$, i.e.,
$$\sigma(\Delta_1)=\left\lbrace \lambda_k = -\frac{k^2\pi^2}{4} , \ k=0,1,2,\ldots\right\rbrace,$$
with associated eigenfunctions
\begin{equation*}
\begin{split}
&e_0(x) =\frac{1}{\sqrt{2}}, \ x \in [0,2],\\
&e_k(x) = \cos \left( \frac{k\pi}{2} x \right), \ x \in [0,2], \qquad k \ge 1. 
\end{split}
\end{equation*}
In this way, for every initial condition $u_0 \in L^2(0,2)$, we can explicitly write the solution in terms of the spectral representation
$$u(t)=e^{t\Delta_1}u_0=\sum_{k=0}^{+\infty} e^{t \lambda_k} (u_0,e_k)_{L^2(0,2)} \ e_k$$
and, as expected, the limit distribution for long times agrees with the average of $u_0$ computed on the interval $[0,2]$
$$\lim_{t \longrightarrow +\infty} u(t)= (u_0,e_0)e_0=P_0 u_0 = \frac{1}{2} \int_{0}^{2} u_0(x) \, {\rm d}x = \dashint_{[0,2]} u_0(x)\,dx.$$

\begin{figure}[h]
\begin{center}
\tikzset{every picture/.style={line width=0.75pt}} 
\begin{tikzpicture}[x=0.5pt,y=0.5pt,yscale=-1,xscale=1]
\draw    (102.51,80.04) -- (177.81,80.04) ;
\draw [shift={(180.16,80.04)}, rotate = 0] [color={rgb, 255:red, 0; green, 0; blue, 0 }  ][line width=0.75]      (0, 0) circle [x radius= 3.35, y radius= 3.35]   ;
\draw [shift={(100.16,80.04)}, rotate = 0] [color={rgb, 255:red, 0; green, 0; blue, 0 }  ][line width=0.75]      (0, 0) circle [x radius= 3.35, y radius= 3.35]   ;
\draw    (182.51,80.04) -- (257.81,80.04) ;
\draw [shift={(260.16,80.04)}, rotate = 0] [color={rgb, 255:red, 0; green, 0; blue, 0 }  ][line width=0.75]      (0, 0) circle [x radius= 3.35, y radius= 3.35]   ;
\draw [shift={(180.16,80.04)}, rotate = 0] [color={rgb, 255:red, 0; green, 0; blue, 0 }  ][line width=0.75]      (0, 0) circle [x radius= 3.35, y radius= 3.35]   ;
\draw    (480.17,80.33) -- (560.17,80.33) ;
\draw  [fill={rgb, 255:red, 74; green, 74; blue, 74 }  ,fill opacity=1 ] (477.79,80.33) .. controls (477.79,79.02) and (478.85,77.96) .. (480.17,77.96) .. controls (481.48,77.96) and (482.54,79.02) .. (482.54,80.33) .. controls (482.54,81.65) and (481.48,82.71) .. (480.17,82.71) .. controls (478.85,82.71) and (477.79,81.65) .. (477.79,80.33) -- cycle ;
\draw  [fill={rgb, 255:red, 74; green, 74; blue, 74 }  ,fill opacity=1 ] (557.79,80.33) .. controls (557.79,79.02) and (558.85,77.96) .. (560.17,77.96) .. controls (561.48,77.96) and (562.54,79.02) .. (562.54,80.33) .. controls (562.54,81.65) and (561.48,82.71) .. (560.17,82.71) .. controls (558.85,82.71) and (557.79,81.65) .. (557.79,80.33) -- cycle ;
\draw    (380.5,80) -- (460.17,80) ;
\draw  [fill={rgb, 255:red, 74; green, 74; blue, 74 }  ,fill opacity=1 ] (378.13,80) .. controls (378.13,78.69) and (379.19,77.63) .. (380.5,77.63) .. controls (381.81,77.63) and (382.88,78.69) .. (382.88,80) .. controls (382.88,81.31) and (381.81,82.38) .. (380.5,82.38) .. controls (379.19,82.38) and (378.13,81.31) .. (378.13,80) -- cycle ;
\draw  [fill={rgb, 255:red, 74; green, 74; blue, 74 }  ,fill opacity=1 ] (457.79,80) .. controls (457.79,78.69) and (458.85,77.63) .. (460.17,77.63) .. controls (461.48,77.63) and (462.54,78.69) .. (462.54,80) .. controls (462.54,81.31) and (461.48,82.38) .. (460.17,82.38) .. controls (458.85,82.38) and (457.79,81.31) .. (457.79,80) -- cycle ;
\draw [color={rgb, 255:red, 208; green, 2; blue, 27 }  ,draw opacity=1 ]   (300.13,80.25) -- (338.63,80.01) ;
\draw [shift={(340.63,80)}, rotate = 539.65] [color={rgb, 255:red, 208; green, 2; blue, 27 }  ,draw opacity=1 ][line width=0.75]    (10.93,-3.29) .. controls (6.95,-1.4) and (3.31,-0.3) .. (0,0) .. controls (3.31,0.3) and (6.95,1.4) .. (10.93,3.29)   ;
\draw (101.33,61.33) node  [align=left] {\textbf{{\large N}}};
\draw (181.67,62) node  [align=left] {\textbf{{\large N}}};
\draw (261.67,61.67) node  [align=left] {\textbf{{\large N}}};
\draw (101.3,93.5) node   {$0$};
\draw (261,93.2) node   {$2$};
\draw (181,94) node   {$1$};
\draw (423,66) node [scale=1.2]  {$e_{1}$};
\draw (520,66) node [scale=1.2]  {$e_{2}$};
\draw (381.3,94) node   {$0$};
\draw (460,93) node   {$1$};
\draw (561,93) node   {$2$};
\draw (480.8,93) node   {$1'$};
\draw (569,43) node [scale=1.44]  {$\mathcal{G}_2$};
\end{tikzpicture}
\caption{Model B: on the right, the correct interpretation as a network equation with a disconnected graph}
\end{center}
\end{figure}
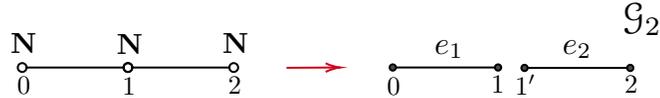

Model B describes the evolution of the heat equation on $[0,2]$ with Neumann boundary conditions in 0, in 2, \textit{as well as} in 1. Formally, we consider $[0,2]$ as the graph $\cG_2$ with $\mV = \{0,1, 1', 2\}$ and edges $\me_1 = [0,1]$ and $e_2=[1',2]$.

The Laplace operator associated with $\cG_2$ is $\Delta_2$ with domain
\begin{equation*}
\begin{split}
D(\Delta_2) &= \{u = (u_1, u_2)\,:\, u_i \in H^2(0,1),\ i=1,2, \ u_1'(0) = u_1'(1) = 0, \ u_2'(0) = u_2'(1)=0\},\\
\Delta_2 u&=\frac{d^2 u}{dx^2}.
\end{split}
\end{equation*}

Here the dynamics is somehow different from the previous one: in fact, the Neumann condition placed in $x=1$ acts like an insulating ``wall" through which heat exchanges are not allowed. 
The spectrum in this case is
$$\sigma({\Delta}_2)= \left\lbrace \overline{\mu}_k = -k^2\pi^2, \ k=0,1,2,\ldots\right\rbrace,$$
where every eigenvalue has now multiplicity two. 
\\
For every initial condition $g = (g_1, g_2) \in L^2(\mathcal{G})$ the solution $u(t)$ converges, as $t \to \infty$, to 
$\displaystyle (\dashint g_1, \dashint g_2)$.

Starting from these two models, we now introduce the following scenario: imagine that we are going to study the heat diffusion along the interval $[0,2]$ with Neumann boundary conditions. However,  at each renewal time $T_n$ we can decide to add or remove one third Neumann condition at $x=1$. 
In particular, the choice of considering three or two constraints is determined by a suitable random process.
This means that the system switches between Model A and Model B and the stochastic evolution problem is of the form \eqref{CPS}.

We shall see that the asymptotic behavior of our systems is given by the uniform $\bP$-almost sure convergence towards the orthogonal projector $P_0$ to the constant functions.

\subsection{The general model}

Like in Section~\ref{sez3}, we are going to introduce ensembles of metric graphs.

\begin{assumption}\label{a-metgraphs}
 $\cC = \{\mathcal G_1, . . . , \mathcal G_N\}$, 
 where $\mathcal G_1, . . . , \mathcal G_N$ are metric graphs with the same edge set $\mathcal E$ (i.e., defined upon the same finite set $\mE$ and the same vector $(\ell_\me)_{\me\in\mE}$) but possibly different sets of vertices $\mathcal V_1:=\mathcal V(\mathcal G_1), . . . , \mathcal V_N:=\mathcal V(\mathcal G_N)$ (i.e., the equivalence relations $\equiv_1,\ldots, \equiv_N$ may be different).
 \end{assumption}

Once again, we introduce a probability space $(\Omega,\mathcal{F},\mathbb{P})$ and a semi-Markov process $(Z(t))_{t\ge 0}$ satisfying Assumption \ref{hpZ}.

At this point, we can associate with each graph $\mathcal G_i$ in $\mathcal{C}$
an operator $A_i$  with natural vertex conditions and {elliptic coefficient $p_i \in W^{1,\infty}$ as in~\eqref{eq:Ap}, which we denote by
$$(A_i, D(A_i)),\qquad i=1,\ldots,N:$$
we emphasize that the different vertex sets induce different operator domains, even though all operators satisfy the same class of vertex conditions: for example, ``cutting through a vertex'', hence producing two vertices of lower degree out of a vertex of larger degree, induces a new operator with relaxed continuity conditions (and two new Kirchhoff conditions).

By Proposition~\ref{prop:lapl-mg-basic}, all these operators satisfy the Assumptions \ref{hpA} and \ref{hpc}.
We can state our main problem, i.e., the continuous random evolution  on metric graphs
\begin{equation}
\label{CRE}
\begin{cases}
\tfrac{d}{d t}u(t) = A_{X_k}u(t), \hspace{0,5cm}t \in [T_k,T_{k+1}), \\
u(0)=f \in  L^2(\cG).
\end{cases}
\end{equation}
We recall that $S(t)$ is the random propagator associated with problem \eqref{CRE} such that $u(t) = S(t)f$.
Our interest is again to prove a link between the convergence of $S(t)$ towards the orthogonal projector $P_0$ with the connectedness of the union of the graphs in $\cC$.
However, the key point here is to give a definition of the concept of {\em union graph}
in the metric setting: this follows immediately from the above formalism, see~\cite{Mug19}.
 
\begin{definition}[Union and intersection of metric graphs]
\label{defunionegrafimetrici}
Let $\mathcal{G}_1,\ldots,\mathcal{G}_N$ be metric graphs defined on the same $\mathcal E$, i.e., $\mathcal G_i=\faktor{\mathcal E}{\equiv_i}$, $i=1,\ldots,N$.
Denote by $\equiv_\cup$ and by $\equiv_\cap$ the equivalence relations obtained by taking the  reflexive, symmetric, and transitive closure of $\bigcup_{i=1}^N \equiv_i \ \subset \mathcal V\times \mathcal V$ and $\bigcap_{i=1}^N \equiv_i \ \subset \mathcal V\times \mathcal V$, respectively.
Then, we call \emph{union} and \emph{intersection metric graph} the metric graphs 
$$\mathcal{G}_\cup:=\faktor{\mathcal E}{\mathcal \equiv_\cup}\qquad\hbox{and}\qquad \mathcal{G}_\cap:=\faktor{\mathcal E}{\mathcal \equiv_\cap},$$
respectively.
\end{definition}

In Fig. \ref{EXAMPLE} we can consider some examples of union graphs. 
\vspace{0,5cm}

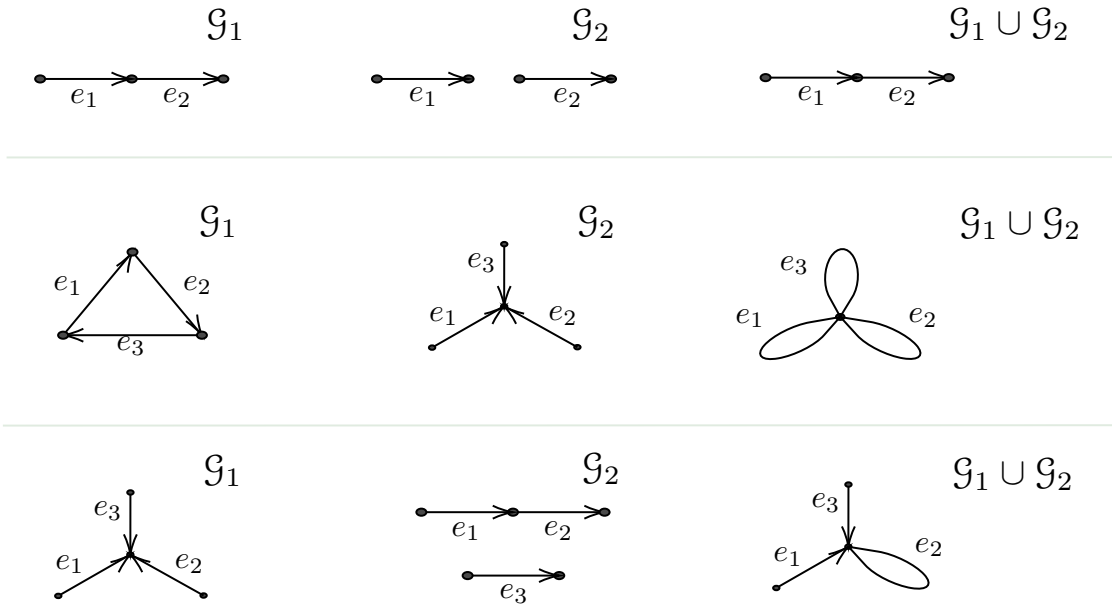
\begin{figure}[h]
\begin{center}
\tikzset{every picture/.style={line width=0.75pt}} 
\begin{tikzpicture}[x=0.7pt,y=0.5pt,yscale=-1,xscale=1]
\draw    (59.83,269.93) -- (132.83,269.93) ;
\draw [shift={(57.83,269.93)}, rotate = 0] [color={rgb, 255:red, 0; green, 0; blue, 0 }  ][line width=0.75]    (10.93,-4.9) .. controls (6.95,-2.3) and (3.31,-0.67) .. (0,0) .. controls (3.31,0.67) and (6.95,2.3) .. (10.93,4.9)   ;
\draw    (131.81,268.21) -- (95.33,206.93) ;
\draw [shift={(132.83,269.93)}, rotate = 239.24] [color={rgb, 255:red, 0; green, 0; blue, 0 }  ][line width=0.75]    (10.93,-4.9) .. controls (6.95,-2.3) and (3.31,-0.67) .. (0,0) .. controls (3.31,0.67) and (6.95,2.3) .. (10.93,4.9)   ;
\draw    (94.31,208.65) -- (57.83,269.93) ;
\draw [shift={(95.33,206.93)}, rotate = 120.76] [color={rgb, 255:red, 0; green, 0; blue, 0 }  ][line width=0.75]    (10.93,-4.9) .. controls (6.95,-2.3) and (3.31,-0.67) .. (0,0) .. controls (3.31,0.67) and (6.95,2.3) .. (10.93,4.9)   ;
\draw  [fill={rgb, 255:red, 74; green, 74; blue, 74 }  ,fill opacity=1 ] (55.13,269.93) .. controls (55.13,268.44) and (56.34,267.23) .. (57.83,267.23) .. controls (59.32,267.23) and (60.53,268.44) .. (60.53,269.93) .. controls (60.53,271.42) and (59.32,272.63) .. (57.83,272.63) .. controls (56.34,272.63) and (55.13,271.42) .. (55.13,269.93) -- cycle ;
\draw  [fill={rgb, 255:red, 74; green, 74; blue, 74 }  ,fill opacity=1 ] (92.63,206.93) .. controls (92.63,205.44) and (93.84,204.23) .. (95.33,204.23) .. controls (96.82,204.23) and (98.03,205.44) .. (98.03,206.93) .. controls (98.03,208.42) and (96.82,209.63) .. (95.33,209.63) .. controls (93.84,209.63) and (92.63,208.42) .. (92.63,206.93) -- cycle ;
\draw  [fill={rgb, 255:red, 74; green, 74; blue, 74 }  ,fill opacity=1 ] (130.13,269.93) .. controls (130.13,268.44) and (131.34,267.23) .. (132.83,267.23) .. controls (134.32,267.23) and (135.53,268.44) .. (135.53,269.93) .. controls (135.53,271.42) and (134.32,272.63) .. (132.83,272.63) .. controls (131.34,272.63) and (130.13,271.42) .. (130.13,269.93) -- cycle ;
\draw    (296.16,246.03) -- (296.2,200.99) ;
\draw [shift={(296.16,248.03)}, rotate = 270.05] [color={rgb, 255:red, 0; green, 0; blue, 0 }  ][line width=0.75]    (10.93,-4.9) .. controls (6.95,-2.3) and (3.31,-0.67) .. (0,0) .. controls (3.31,0.67) and (6.95,2.3) .. (10.93,4.9)   ;
\draw  [fill={rgb, 255:red, 74; green, 74; blue, 74 }  ,fill opacity=1 ] (294.48,200.99) .. controls (294.48,200.05) and (295.25,199.3) .. (296.2,199.3) .. controls (297.15,199.3) and (297.92,200.05) .. (297.92,200.99) .. controls (297.92,201.92) and (297.15,202.67) .. (296.2,202.67) .. controls (295.25,202.67) and (294.48,201.92) .. (294.48,200.99) -- cycle ;
\draw  [fill={rgb, 255:red, 74; green, 74; blue, 74 }  ,fill opacity=1 ] (294.44,248.03) .. controls (294.44,247.1) and (295.21,246.35) .. (296.16,246.35) .. controls (297.11,246.35) and (297.87,247.1) .. (297.87,248.03) .. controls (297.87,248.97) and (297.11,249.72) .. (296.16,249.72) .. controls (295.21,249.72) and (294.44,248.97) .. (294.44,248.03) -- cycle ;
\draw    (257.22,279.31) -- (294.6,249.29) ;
\draw [shift={(296.16,248.03)}, rotate = 501.23] [color={rgb, 255:red, 0; green, 0; blue, 0 }  ][line width=0.75]    (10.93,-4.9) .. controls (6.95,-2.3) and (3.31,-0.67) .. (0,0) .. controls (3.31,0.67) and (6.95,2.3) .. (10.93,4.9)   ;
\draw  [fill={rgb, 255:red, 74; green, 74; blue, 74 }  ,fill opacity=1 ] (334.02,279) .. controls (334.02,278.07) and (334.79,277.32) .. (335.73,277.32) .. controls (336.68,277.32) and (337.45,278.07) .. (337.45,279) .. controls (337.45,279.93) and (336.68,280.69) .. (335.73,280.69) .. controls (334.79,280.69) and (334.02,279.93) .. (334.02,279) -- cycle ;
\draw  [fill={rgb, 255:red, 74; green, 74; blue, 74 }  ,fill opacity=1 ] (255.5,279.31) .. controls (255.5,278.38) and (256.27,277.63) .. (257.22,277.63) .. controls (258.16,277.63) and (258.93,278.38) .. (258.93,279.31) .. controls (258.93,280.25) and (258.16,281) .. (257.22,281) .. controls (256.27,281) and (255.5,280.25) .. (255.5,279.31) -- cycle ;
\draw    (297.73,249.27) -- (335.73,279) ;
\draw [shift={(296.16,248.03)}, rotate = 38.04] [color={rgb, 255:red, 0; green, 0; blue, 0 }  ][line width=0.75]    (10.93,-4.9) .. controls (6.95,-2.3) and (3.31,-0.67) .. (0,0) .. controls (3.31,0.67) and (6.95,2.3) .. (10.93,4.9)   ;
\draw  [fill={rgb, 255:red, 74; green, 74; blue, 74 }  ,fill opacity=1 ] (475.46,256.1) .. controls (475.46,254.85) and (476.49,253.85) .. (477.76,253.85) .. controls (479.03,253.85) and (480.06,254.85) .. (480.06,256.1) .. controls (480.06,257.34) and (479.03,258.35) .. (477.76,258.35) .. controls (476.49,258.35) and (475.46,257.34) .. (475.46,256.1) -- cycle ;
\draw   (471.85,240.95) .. controls (471.85,240.95) and (471.85,240.95) .. (471.85,240.95) .. controls (471.85,240.95) and (471.85,240.95) .. (471.85,240.95) .. controls (468.58,232.69) and (468.89,219.27) .. (472.53,210.97) .. controls (476.17,202.67) and (481.77,202.63) .. (485.04,210.89) .. controls (488.31,219.15) and (488,232.57) .. (484.36,240.88) .. controls (479.96,250.9) and (477.76,255.97) .. (477.76,256.1) .. controls (477.76,255.97) and (475.79,250.92) .. (471.85,240.95) -- cycle ;
\draw   (493.85,259.82) .. controls (502.65,261.82) and (513.79,269.61) .. (518.74,277.24) .. controls (523.69,284.86) and (520.57,289.42) .. (511.77,287.43) .. controls (502.97,285.44) and (491.83,277.64) .. (486.88,270.02) .. controls (480.9,260.81) and (477.87,256.18) .. (477.76,256.1) .. controls (477.87,256.18) and (483.23,257.41) .. (493.85,259.82) -- cycle ;
\draw   (469.02,269.87) .. controls (469.02,269.87) and (469.02,269.87) .. (469.02,269.87) .. controls (469.02,269.87) and (469.02,269.87) .. (469.02,269.87) .. controls (464.29,277.41) and (453.2,285.28) .. (444.25,287.45) .. controls (435.31,289.61) and (431.9,285.25) .. (436.63,277.71) .. controls (441.37,270.17) and (452.46,262.3) .. (461.41,260.13) .. controls (461.41,260.13) and (461.41,260.13) .. (461.41,260.13) .. controls (472.21,257.51) and (477.66,256.17) .. (477.76,256.1) .. controls (477.66,256.17) and (474.74,260.77) .. (469.02,269.87) -- cycle ;
\draw [color={rgb, 255:red, 224; green, 235; blue, 224 }  ,draw opacity=0.99 ][line width=0.75]    (25.29,338.36) -- (625.57,338.07) ;
\draw [color={rgb, 255:red, 224; green, 235; blue, 224 }  ,draw opacity=0.99 ][line width=0.75]    (27.29,135.36) -- (627.57,135.07) ;
\draw    (94.92,75.92) -- (142.42,75.92) ;
\draw [shift={(144.42,75.92)}, rotate = 180] [color={rgb, 255:red, 0; green, 0; blue, 0 }  ][line width=0.75]    (10.93,-4.9) .. controls (6.95,-2.3) and (3.31,-0.67) .. (0,0) .. controls (3.31,0.67) and (6.95,2.3) .. (10.93,4.9)   ;
\draw  [fill={rgb, 255:red, 74; green, 74; blue, 74 }  ,fill opacity=1 ] (92.22,75.92) .. controls (92.22,74.43) and (93.43,73.22) .. (94.92,73.22) .. controls (96.41,73.22) and (97.62,74.43) .. (97.62,75.92) .. controls (97.62,77.41) and (96.41,78.62) .. (94.92,78.62) .. controls (93.43,78.62) and (92.22,77.41) .. (92.22,75.92) -- cycle ;
\draw  [fill={rgb, 255:red, 74; green, 74; blue, 74 }  ,fill opacity=1 ] (141.72,75.92) .. controls (141.72,74.43) and (142.93,73.22) .. (144.42,73.22) .. controls (145.91,73.22) and (147.12,74.43) .. (147.12,75.92) .. controls (147.12,77.41) and (145.91,78.62) .. (144.42,78.62) .. controls (142.93,78.62) and (141.72,77.41) .. (141.72,75.92) -- cycle ;
\draw    (45.42,75.92) -- (92.92,75.92) ;
\draw [shift={(94.92,75.92)}, rotate = 180] [color={rgb, 255:red, 0; green, 0; blue, 0 }  ][line width=0.75]    (10.93,-4.9) .. controls (6.95,-2.3) and (3.31,-0.67) .. (0,0) .. controls (3.31,0.67) and (6.95,2.3) .. (10.93,4.9)   ;
\draw  [fill={rgb, 255:red, 74; green, 74; blue, 74 }  ,fill opacity=1 ] (42.72,75.92) .. controls (42.72,74.43) and (43.93,73.22) .. (45.42,73.22) .. controls (46.91,73.22) and (48.12,74.43) .. (48.12,75.92) .. controls (48.12,77.41) and (46.91,78.62) .. (45.42,78.62) .. controls (43.93,78.62) and (42.72,77.41) .. (42.72,75.92) -- cycle ;
\draw  [fill={rgb, 255:red, 74; green, 74; blue, 74 }  ,fill opacity=1 ] (274.22,75.92) .. controls (274.22,74.43) and (275.43,73.22) .. (276.92,73.22) .. controls (278.41,73.22) and (279.62,74.43) .. (279.62,75.92) .. controls (279.62,77.41) and (278.41,78.62) .. (276.92,78.62) .. controls (275.43,78.62) and (274.22,77.41) .. (274.22,75.92) -- cycle ;
\draw    (227.42,75.92) -- (274.92,75.92) ;
\draw [shift={(276.92,75.92)}, rotate = 180] [color={rgb, 255:red, 0; green, 0; blue, 0 }  ][line width=0.75]    (10.93,-4.9) .. controls (6.95,-2.3) and (3.31,-0.67) .. (0,0) .. controls (3.31,0.67) and (6.95,2.3) .. (10.93,4.9)   ;
\draw  [fill={rgb, 255:red, 74; green, 74; blue, 74 }  ,fill opacity=1 ] (224.72,75.92) .. controls (224.72,74.43) and (225.93,73.22) .. (227.42,73.22) .. controls (228.91,73.22) and (230.12,74.43) .. (230.12,75.92) .. controls (230.12,77.41) and (228.91,78.62) .. (227.42,78.62) .. controls (225.93,78.62) and (224.72,77.41) .. (224.72,75.92) -- cycle ;
\draw  [fill={rgb, 255:red, 74; green, 74; blue, 74 }  ,fill opacity=1 ] (351.22,75.92) .. controls (351.22,74.43) and (352.43,73.22) .. (353.92,73.22) .. controls (355.41,73.22) and (356.62,74.43) .. (356.62,75.92) .. controls (356.62,77.41) and (355.41,78.62) .. (353.92,78.62) .. controls (352.43,78.62) and (351.22,77.41) .. (351.22,75.92) -- cycle ;
\draw    (304.42,75.92) -- (351.92,75.92) ;
\draw [shift={(353.92,75.92)}, rotate = 180] [color={rgb, 255:red, 0; green, 0; blue, 0 }  ][line width=0.75]    (10.93,-4.9) .. controls (6.95,-2.3) and (3.31,-0.67) .. (0,0) .. controls (3.31,0.67) and (6.95,2.3) .. (10.93,4.9)   ;
\draw  [fill={rgb, 255:red, 74; green, 74; blue, 74 }  ,fill opacity=1 ] (301.72,75.92) .. controls (301.72,74.43) and (302.93,73.22) .. (304.42,73.22) .. controls (305.91,73.22) and (307.12,74.43) .. (307.12,75.92) .. controls (307.12,77.41) and (305.91,78.62) .. (304.42,78.62) .. controls (302.93,78.62) and (301.72,77.41) .. (301.72,75.92) -- cycle ;
\draw    (486.92,74.92) -- (534.42,74.92) ;
\draw [shift={(536.42,74.92)}, rotate = 180] [color={rgb, 255:red, 0; green, 0; blue, 0 }  ][line width=0.75]    (10.93,-4.9) .. controls (6.95,-2.3) and (3.31,-0.67) .. (0,0) .. controls (3.31,0.67) and (6.95,2.3) .. (10.93,4.9)   ;
\draw  [fill={rgb, 255:red, 74; green, 74; blue, 74 }  ,fill opacity=1 ] (484.22,74.92) .. controls (484.22,73.43) and (485.43,72.22) .. (486.92,72.22) .. controls (488.41,72.22) and (489.62,73.43) .. (489.62,74.92) .. controls (489.62,76.41) and (488.41,77.62) .. (486.92,77.62) .. controls (485.43,77.62) and (484.22,76.41) .. (484.22,74.92) -- cycle ;
\draw  [fill={rgb, 255:red, 74; green, 74; blue, 74 }  ,fill opacity=1 ] (533.72,74.92) .. controls (533.72,73.43) and (534.93,72.22) .. (536.42,72.22) .. controls (537.91,72.22) and (539.12,73.43) .. (539.12,74.92) .. controls (539.12,76.41) and (537.91,77.62) .. (536.42,77.62) .. controls (534.93,77.62) and (533.72,76.41) .. (533.72,74.92) -- cycle ;
\draw    (437.42,74.92) -- (484.92,74.92) ;
\draw [shift={(486.92,74.92)}, rotate = 180] [color={rgb, 255:red, 0; green, 0; blue, 0 }  ][line width=0.75]    (10.93,-4.9) .. controls (6.95,-2.3) and (3.31,-0.67) .. (0,0) .. controls (3.31,0.67) and (6.95,2.3) .. (10.93,4.9)   ;
\draw  [fill={rgb, 255:red, 74; green, 74; blue, 74 }  ,fill opacity=1 ] (434.72,74.92) .. controls (434.72,73.43) and (435.93,72.22) .. (437.42,72.22) .. controls (438.91,72.22) and (440.12,73.43) .. (440.12,74.92) .. controls (440.12,76.41) and (438.91,77.62) .. (437.42,77.62) .. controls (435.93,77.62) and (434.72,76.41) .. (434.72,74.92) -- cycle ;
\draw    (94.16,434.03) -- (94.2,388.99) ;
\draw [shift={(94.16,436.03)}, rotate = 270.05] [color={rgb, 255:red, 0; green, 0; blue, 0 }  ][line width=0.75]    (10.93,-4.9) .. controls (6.95,-2.3) and (3.31,-0.67) .. (0,0) .. controls (3.31,0.67) and (6.95,2.3) .. (10.93,4.9)   ;
\draw  [fill={rgb, 255:red, 74; green, 74; blue, 74 }  ,fill opacity=1 ] (92.48,388.99) .. controls (92.48,388.05) and (93.25,387.3) .. (94.2,387.3) .. controls (95.15,387.3) and (95.92,388.05) .. (95.92,388.99) .. controls (95.92,389.92) and (95.15,390.67) .. (94.2,390.67) .. controls (93.25,390.67) and (92.48,389.92) .. (92.48,388.99) -- cycle ;
\draw  [fill={rgb, 255:red, 74; green, 74; blue, 74 }  ,fill opacity=1 ] (92.44,436.03) .. controls (92.44,435.1) and (93.21,434.35) .. (94.16,434.35) .. controls (95.11,434.35) and (95.87,435.1) .. (95.87,436.03) .. controls (95.87,436.97) and (95.11,437.72) .. (94.16,437.72) .. controls (93.21,437.72) and (92.44,436.97) .. (92.44,436.03) -- cycle ;
\draw    (55.22,467.31) -- (92.6,437.29) ;
\draw [shift={(94.16,436.03)}, rotate = 501.23] [color={rgb, 255:red, 0; green, 0; blue, 0 }  ][line width=0.75]    (10.93,-4.9) .. controls (6.95,-2.3) and (3.31,-0.67) .. (0,0) .. controls (3.31,0.67) and (6.95,2.3) .. (10.93,4.9)   ;
\draw  [fill={rgb, 255:red, 74; green, 74; blue, 74 }  ,fill opacity=1 ] (132.02,467) .. controls (132.02,466.07) and (132.79,465.32) .. (133.73,465.32) .. controls (134.68,465.32) and (135.45,466.07) .. (135.45,467) .. controls (135.45,467.93) and (134.68,468.69) .. (133.73,468.69) .. controls (132.79,468.69) and (132.02,467.93) .. (132.02,467) -- cycle ;
\draw  [fill={rgb, 255:red, 74; green, 74; blue, 74 }  ,fill opacity=1 ] (53.5,467.31) .. controls (53.5,466.38) and (54.27,465.63) .. (55.22,465.63) .. controls (56.16,465.63) and (56.93,466.38) .. (56.93,467.31) .. controls (56.93,468.25) and (56.16,469) .. (55.22,469) .. controls (54.27,469) and (53.5,468.25) .. (53.5,467.31) -- cycle ;
\draw    (95.73,437.27) -- (133.73,467) ;
\draw [shift={(94.16,436.03)}, rotate = 38.04] [color={rgb, 255:red, 0; green, 0; blue, 0 }  ][line width=0.75]    (10.93,-4.9) .. controls (6.95,-2.3) and (3.31,-0.67) .. (0,0) .. controls (3.31,0.67) and (6.95,2.3) .. (10.93,4.9)   ;
\draw    (300.92,403.92) -- (348.42,403.92) ;
\draw [shift={(350.42,403.92)}, rotate = 180] [color={rgb, 255:red, 0; green, 0; blue, 0 }  ][line width=0.75]    (10.93,-4.9) .. controls (6.95,-2.3) and (3.31,-0.67) .. (0,0) .. controls (3.31,0.67) and (6.95,2.3) .. (10.93,4.9)   ;
\draw  [fill={rgb, 255:red, 74; green, 74; blue, 74 }  ,fill opacity=1 ] (298.22,403.92) .. controls (298.22,402.43) and (299.43,401.22) .. (300.92,401.22) .. controls (302.41,401.22) and (303.62,402.43) .. (303.62,403.92) .. controls (303.62,405.41) and (302.41,406.62) .. (300.92,406.62) .. controls (299.43,406.62) and (298.22,405.41) .. (298.22,403.92) -- cycle ;
\draw  [fill={rgb, 255:red, 74; green, 74; blue, 74 }  ,fill opacity=1 ] (347.72,403.92) .. controls (347.72,402.43) and (348.93,401.22) .. (350.42,401.22) .. controls (351.91,401.22) and (353.12,402.43) .. (353.12,403.92) .. controls (353.12,405.41) and (351.91,406.62) .. (350.42,406.62) .. controls (348.93,406.62) and (347.72,405.41) .. (347.72,403.92) -- cycle ;
\draw    (251.42,403.92) -- (298.92,403.92) ;
\draw [shift={(300.92,403.92)}, rotate = 180] [color={rgb, 255:red, 0; green, 0; blue, 0 }  ][line width=0.75]    (10.93,-4.9) .. controls (6.95,-2.3) and (3.31,-0.67) .. (0,0) .. controls (3.31,0.67) and (6.95,2.3) .. (10.93,4.9)   ;
\draw  [fill={rgb, 255:red, 74; green, 74; blue, 74 }  ,fill opacity=1 ] (248.72,403.92) .. controls (248.72,402.43) and (249.93,401.22) .. (251.42,401.22) .. controls (252.91,401.22) and (254.12,402.43) .. (254.12,403.92) .. controls (254.12,405.41) and (252.91,406.62) .. (251.42,406.62) .. controls (249.93,406.62) and (248.72,405.41) .. (248.72,403.92) -- cycle ;
\draw  [fill={rgb, 255:red, 74; green, 74; blue, 74 }  ,fill opacity=1 ] (323.22,451.92) .. controls (323.22,450.43) and (324.43,449.22) .. (325.92,449.22) .. controls (327.41,449.22) and (328.62,450.43) .. (328.62,451.92) .. controls (328.62,453.41) and (327.41,454.62) .. (325.92,454.62) .. controls (324.43,454.62) and (323.22,453.41) .. (323.22,451.92) -- cycle ;
\draw    (276.42,451.92) -- (323.92,451.92) ;
\draw [shift={(325.92,451.92)}, rotate = 180] [color={rgb, 255:red, 0; green, 0; blue, 0 }  ][line width=0.75]    (10.93,-4.9) .. controls (6.95,-2.3) and (3.31,-0.67) .. (0,0) .. controls (3.31,0.67) and (6.95,2.3) .. (10.93,4.9)   ;
\draw  [fill={rgb, 255:red, 74; green, 74; blue, 74 }  ,fill opacity=1 ] (273.72,451.92) .. controls (273.72,450.43) and (274.93,449.22) .. (276.42,449.22) .. controls (277.91,449.22) and (279.12,450.43) .. (279.12,451.92) .. controls (279.12,453.41) and (277.91,454.62) .. (276.42,454.62) .. controls (274.93,454.62) and (273.72,453.41) .. (273.72,451.92) -- cycle ;
\draw    (482.16,428.03) -- (482.2,382.99) ;
\draw [shift={(482.16,430.03)}, rotate = 270.05] [color={rgb, 255:red, 0; green, 0; blue, 0 }  ][line width=0.75]    (10.93,-4.9) .. controls (6.95,-2.3) and (3.31,-0.67) .. (0,0) .. controls (3.31,0.67) and (6.95,2.3) .. (10.93,4.9)   ;
\draw  [fill={rgb, 255:red, 74; green, 74; blue, 74 }  ,fill opacity=1 ] (480.48,382.99) .. controls (480.48,382.05) and (481.25,381.3) .. (482.2,381.3) .. controls (483.15,381.3) and (483.92,382.05) .. (483.92,382.99) .. controls (483.92,383.92) and (483.15,384.67) .. (482.2,384.67) .. controls (481.25,384.67) and (480.48,383.92) .. (480.48,382.99) -- cycle ;
\draw  [fill={rgb, 255:red, 74; green, 74; blue, 74 }  ,fill opacity=1 ] (480.44,430.03) .. controls (480.44,429.1) and (481.21,428.35) .. (482.16,428.35) .. controls (483.11,428.35) and (483.87,429.1) .. (483.87,430.03) .. controls (483.87,430.97) and (483.11,431.72) .. (482.16,431.72) .. controls (481.21,431.72) and (480.44,430.97) .. (480.44,430.03) -- cycle ;
\draw    (443.22,461.31) -- (480.6,431.29) ;
\draw [shift={(482.16,430.03)}, rotate = 501.23] [color={rgb, 255:red, 0; green, 0; blue, 0 }  ][line width=0.75]    (10.93,-4.9) .. controls (6.95,-2.3) and (3.31,-0.67) .. (0,0) .. controls (3.31,0.67) and (6.95,2.3) .. (10.93,4.9)   ;
\draw  [fill={rgb, 255:red, 74; green, 74; blue, 74 }  ,fill opacity=1 ] (441.5,461.31) .. controls (441.5,460.38) and (442.27,459.63) .. (443.22,459.63) .. controls (444.16,459.63) and (444.93,460.38) .. (444.93,461.31) .. controls (444.93,462.25) and (444.16,463) .. (443.22,463) .. controls (442.27,463) and (441.5,462.25) .. (441.5,461.31) -- cycle ;
\draw   (498.25,433.76) .. controls (507.05,435.75) and (518.19,443.55) .. (523.14,451.17) .. controls (528.09,458.79) and (524.97,463.36) .. (516.17,461.36) .. controls (507.37,459.37) and (496.23,451.57) .. (491.28,443.95) .. controls (485.3,434.75) and (482.26,430.11) .. (482.16,430.03) .. controls (482.26,430.11) and (487.63,431.35) .. (498.25,433.76) -- cycle ;
\draw (141.67,183.67) node [scale=1.44]  {$\mathcal{G}_{1}$};
\draw (346,184) node [scale=1.44]  {$\mathcal{G}_{2}$};
\draw (575,185) node [scale=1.44]  {$\mathcal{G}_{1} \cup \mathcal{G}_{2}$};
\draw (60.5,232.5) node [scale=1.2]  {$e_{1}$};
\draw (263.5,255.5) node [scale=1.2]  {$e_{1}$};
\draw (429,257) node [scale=1.2]  {$e_{1}$};
\draw (130.5,232) node [scale=1.2]  {$e_{2}$};
\draw (328,255) node [scale=1.2]  {$e_{2}$};
\draw (522.5,256.5) node [scale=1.2]  {$e_{2}$};
\draw (94.5,278) node [scale=1.2]  {$e_{3}$};
\draw (284,215) node [scale=1.2]  {$e_{3}$};
\draw (453,217) node [scale=1.2]  {$e_{3}$};
\draw (145.67,34.67) node [scale=1.44]  {$\mathcal{G}_{1}$};
\draw (343,35) node [scale=1.44]  {$\mathcal{G}_{2}$};
\draw (569,34) node [scale=1.44]  {$\mathcal{G}_{1} \cup \mathcal{G}_{2}$};
\draw (69.5,90.5) node [scale=1.2]  {$e_{1}$};
\draw (252.5,89.5) node [scale=1.2]  {$e_{1}$};
\draw (462.5,88.5) node [scale=1.2]  {$e_{1}$};
\draw (119.5,90) node [scale=1.2]  {$e_{2}$};
\draw (330.5,90) node [scale=1.2]  {$e_{2}$};
\draw (511.5,89) node [scale=1.2]  {$e_{2}$};
\draw (144,372) node [scale=1.44]  {$\mathcal{G}_{1}$};
\draw (61.5,443.5) node [scale=1.2]  {$e_{1}$};
\draw (126,443) node [scale=1.2]  {$e_{2}$};
\draw (82,403) node [scale=1.2]  {$e_{3}$};
\draw (348.67,370.67) node [scale=1.44]  {$\mathcal{G}_{2}$};
\draw (275.5,418.5) node [scale=1.2]  {$e_{1}$};
\draw (325.5,418) node [scale=1.2]  {$e_{2}$};
\draw (301.5,465.5) node [scale=1.2]  {$e_{3}$};
\draw (449.5,437.5) node [scale=1.2]  {$e_{1}$};
\draw (470,397) node [scale=1.2]  {$e_{3}$};
\draw (525.9,430.5) node [scale=1.2]  {$e_{2}$};
\draw (571,375) node [scale=1.44]  {$\mathcal{G}_{1} \cup \mathcal{G}_{2}$};
\end{tikzpicture}
\end{center}
\caption{Some examples of union graph.}
\label{EXAMPLE}
\end{figure}

\newpage

\begin{remark} \label{remremrem}
We observe that for fixed $\equiv_1,\equiv_2$, the union metric graph $\mathcal{G}_1 \cup \mathcal{G}_2$ does depend on the orientations of the edges in $\mathcal E$ (as so do $\mathcal{G}_1, \mathcal{G}_2$, too); this is in sharp contrast to the case of combinatorial graphs.

For instance, we can take the same graphs $\mathcal{G}_1$ and $\mathcal{G}_2$ in the third example in Fig. \ref{EXAMPLE} and just reverse the orientation of one edge as shown in Fig.\ \ref{UNIORI}.
\end{remark}

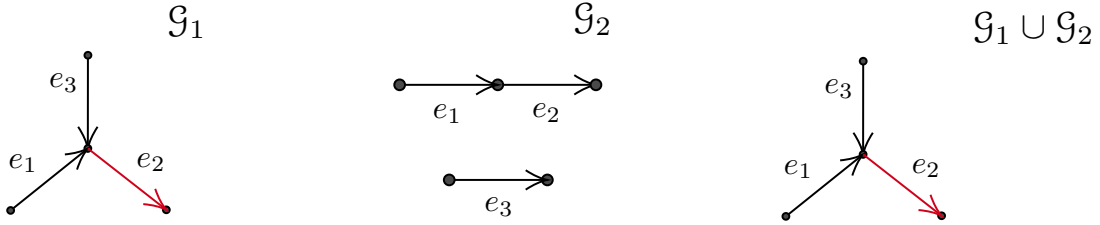
\begin{figure}[h]
\begin{center}
\tikzset{every picture/.style={line width=0.75pt}} 
\begin{tikzpicture}[x=0.75pt,y=0.75pt,yscale=-1,xscale=1]
\draw    (102.16,98.03) -- (102.2,52.99) ;
\draw [shift={(102.16,100.03)}, rotate = 270.05] [color={rgb, 255:red, 0; green, 0; blue, 0 }  ][line width=0.75]    (10.93,-4.9) .. controls (6.95,-2.3) and (3.31,-0.67) .. (0,0) .. controls (3.31,0.67) and (6.95,2.3) .. (10.93,4.9)   ;
\draw  [fill={rgb, 255:red, 74; green, 74; blue, 74 }  ,fill opacity=1 ] (100.48,52.99) .. controls (100.48,52.05) and (101.25,51.3) .. (102.2,51.3) .. controls (103.15,51.3) and (103.92,52.05) .. (103.92,52.99) .. controls (103.92,53.92) and (103.15,54.67) .. (102.2,54.67) .. controls (101.25,54.67) and (100.48,53.92) .. (100.48,52.99) -- cycle ;
\draw  [fill={rgb, 255:red, 74; green, 74; blue, 74 }  ,fill opacity=1 ] (100.44,100.03) .. controls (100.44,99.1) and (101.21,98.35) .. (102.16,98.35) .. controls (103.11,98.35) and (103.87,99.1) .. (103.87,100.03) .. controls (103.87,100.97) and (103.11,101.72) .. (102.16,101.72) .. controls (101.21,101.72) and (100.44,100.97) .. (100.44,100.03) -- cycle ;
\draw    (63.22,131.31) -- (100.6,101.29) ;
\draw [shift={(102.16,100.03)}, rotate = 501.23] [color={rgb, 255:red, 0; green, 0; blue, 0 }  ][line width=0.75]    (10.93,-4.9) .. controls (6.95,-2.3) and (3.31,-0.67) .. (0,0) .. controls (3.31,0.67) and (6.95,2.3) .. (10.93,4.9)   ;
\draw  [fill={rgb, 255:red, 74; green, 74; blue, 74 }  ,fill opacity=1 ] (140.02,131) .. controls (140.02,130.07) and (140.79,129.32) .. (141.73,129.32) .. controls (142.68,129.32) and (143.45,130.07) .. (143.45,131) .. controls (143.45,131.93) and (142.68,132.69) .. (141.73,132.69) .. controls (140.79,132.69) and (140.02,131.93) .. (140.02,131) -- cycle ;
\draw  [fill={rgb, 255:red, 74; green, 74; blue, 74 }  ,fill opacity=1 ] (61.5,131.31) .. controls (61.5,130.38) and (62.27,129.63) .. (63.22,129.63) .. controls (64.16,129.63) and (64.93,130.38) .. (64.93,131.31) .. controls (64.93,132.25) and (64.16,133) .. (63.22,133) .. controls (62.27,133) and (61.5,132.25) .. (61.5,131.31) -- cycle ;
\draw [color={rgb, 255:red, 208; green, 2; blue, 27 }  ,draw opacity=1 ]   (102.16,100.03) -- (140.16,129.77) ;
\draw [shift={(141.73,131)}, rotate = 218.04] [color={rgb, 255:red, 208; green, 2; blue, 27 }  ,draw opacity=1 ][line width=0.75]    (10.93,-4.9) .. controls (6.95,-2.3) and (3.31,-0.67) .. (0,0) .. controls (3.31,0.67) and (6.95,2.3) .. (10.93,4.9)   ;
\draw    (308.92,67.92) -- (356.42,67.92) ;
\draw [shift={(358.42,67.92)}, rotate = 180] [color={rgb, 255:red, 0; green, 0; blue, 0 }  ][line width=0.75]    (10.93,-4.9) .. controls (6.95,-2.3) and (3.31,-0.67) .. (0,0) .. controls (3.31,0.67) and (6.95,2.3) .. (10.93,4.9)   ;
\draw  [fill={rgb, 255:red, 74; green, 74; blue, 74 }  ,fill opacity=1 ] (306.22,67.92) .. controls (306.22,66.43) and (307.43,65.22) .. (308.92,65.22) .. controls (310.41,65.22) and (311.62,66.43) .. (311.62,67.92) .. controls (311.62,69.41) and (310.41,70.62) .. (308.92,70.62) .. controls (307.43,70.62) and (306.22,69.41) .. (306.22,67.92) -- cycle ;
\draw  [fill={rgb, 255:red, 74; green, 74; blue, 74 }  ,fill opacity=1 ] (355.72,67.92) .. controls (355.72,66.43) and (356.93,65.22) .. (358.42,65.22) .. controls (359.91,65.22) and (361.12,66.43) .. (361.12,67.92) .. controls (361.12,69.41) and (359.91,70.62) .. (358.42,70.62) .. controls (356.93,70.62) and (355.72,69.41) .. (355.72,67.92) -- cycle ;
\draw    (259.42,67.92) -- (306.92,67.92) ;
\draw [shift={(308.92,67.92)}, rotate = 180] [color={rgb, 255:red, 0; green, 0; blue, 0 }  ][line width=0.75]    (10.93,-4.9) .. controls (6.95,-2.3) and (3.31,-0.67) .. (0,0) .. controls (3.31,0.67) and (6.95,2.3) .. (10.93,4.9)   ;
\draw  [fill={rgb, 255:red, 74; green, 74; blue, 74 }  ,fill opacity=1 ] (256.72,67.92) .. controls (256.72,66.43) and (257.93,65.22) .. (259.42,65.22) .. controls (260.91,65.22) and (262.12,66.43) .. (262.12,67.92) .. controls (262.12,69.41) and (260.91,70.62) .. (259.42,70.62) .. controls (257.93,70.62) and (256.72,69.41) .. (256.72,67.92) -- cycle ;
\draw  [fill={rgb, 255:red, 74; green, 74; blue, 74 }  ,fill opacity=1 ] (331.22,115.92) .. controls (331.22,114.43) and (332.43,113.22) .. (333.92,113.22) .. controls (335.41,113.22) and (336.62,114.43) .. (336.62,115.92) .. controls (336.62,117.41) and (335.41,118.62) .. (333.92,118.62) .. controls (332.43,118.62) and (331.22,117.41) .. (331.22,115.92) -- cycle ;
\draw    (284.42,115.92) -- (331.92,115.92) ;
\draw [shift={(333.92,115.92)}, rotate = 180] [color={rgb, 255:red, 0; green, 0; blue, 0 }  ][line width=0.75]    (10.93,-4.9) .. controls (6.95,-2.3) and (3.31,-0.67) .. (0,0) .. controls (3.31,0.67) and (6.95,2.3) .. (10.93,4.9)   ;
\draw  [fill={rgb, 255:red, 74; green, 74; blue, 74 }  ,fill opacity=1 ] (281.72,115.92) .. controls (281.72,114.43) and (282.93,113.22) .. (284.42,113.22) .. controls (285.91,113.22) and (287.12,114.43) .. (287.12,115.92) .. controls (287.12,117.41) and (285.91,118.62) .. (284.42,118.62) .. controls (282.93,118.62) and (281.72,117.41) .. (281.72,115.92) -- cycle ;
\draw    (493.16,101.03) -- (493.2,55.99) ;
\draw [shift={(493.16,103.03)}, rotate = 270.05] [color={rgb, 255:red, 0; green, 0; blue, 0 }  ][line width=0.75]    (10.93,-4.9) .. controls (6.95,-2.3) and (3.31,-0.67) .. (0,0) .. controls (3.31,0.67) and (6.95,2.3) .. (10.93,4.9)   ;
\draw  [fill={rgb, 255:red, 74; green, 74; blue, 74 }  ,fill opacity=1 ] (491.48,55.99) .. controls (491.48,55.05) and (492.25,54.3) .. (493.2,54.3) .. controls (494.15,54.3) and (494.92,55.05) .. (494.92,55.99) .. controls (494.92,56.92) and (494.15,57.67) .. (493.2,57.67) .. controls (492.25,57.67) and (491.48,56.92) .. (491.48,55.99) -- cycle ;
\draw  [fill={rgb, 255:red, 74; green, 74; blue, 74 }  ,fill opacity=1 ] (491.44,103.03) .. controls (491.44,102.1) and (492.21,101.35) .. (493.16,101.35) .. controls (494.11,101.35) and (494.87,102.1) .. (494.87,103.03) .. controls (494.87,103.97) and (494.11,104.72) .. (493.16,104.72) .. controls (492.21,104.72) and (491.44,103.97) .. (491.44,103.03) -- cycle ;
\draw    (454.22,134.31) -- (491.6,104.29) ;
\draw [shift={(493.16,103.03)}, rotate = 501.23] [color={rgb, 255:red, 0; green, 0; blue, 0 }  ][line width=0.75]    (10.93,-4.9) .. controls (6.95,-2.3) and (3.31,-0.67) .. (0,0) .. controls (3.31,0.67) and (6.95,2.3) .. (10.93,4.9)   ;
\draw  [fill={rgb, 255:red, 74; green, 74; blue, 74 }  ,fill opacity=1 ] (531.02,134) .. controls (531.02,133.07) and (531.79,132.32) .. (532.73,132.32) .. controls (533.68,132.32) and (534.45,133.07) .. (534.45,134) .. controls (534.45,134.93) and (533.68,135.69) .. (532.73,135.69) .. controls (531.79,135.69) and (531.02,134.93) .. (531.02,134) -- cycle ;
\draw  [fill={rgb, 255:red, 74; green, 74; blue, 74 }  ,fill opacity=1 ] (452.5,134.31) .. controls (452.5,133.38) and (453.27,132.63) .. (454.22,132.63) .. controls (455.16,132.63) and (455.93,133.38) .. (455.93,134.31) .. controls (455.93,135.25) and (455.16,136) .. (454.22,136) .. controls (453.27,136) and (452.5,135.25) .. (452.5,134.31) -- cycle ;
\draw [color={rgb, 255:red, 208; green, 2; blue, 27 }  ,draw opacity=1 ]   (493.16,103.03) -- (531.16,132.77) ;
\draw [shift={(532.73,134)}, rotate = 218.04] [color={rgb, 255:red, 208; green, 2; blue, 27 }  ,draw opacity=1 ][line width=0.75]    (10.93,-4.9) .. controls (6.95,-2.3) and (3.31,-0.67) .. (0,0) .. controls (3.31,0.67) and (6.95,2.3) .. (10.93,4.9)   ;
\draw (152,36) node [scale=1.44]  {$\mathcal{G}_{1}$};
\draw (69.5,107.5) node [scale=1.2]  {$e_{1}$};
\draw (134,107) node [scale=1.2]  {$e_{2}$};
\draw (90,67) node [scale=1.2]  {$e_{3}$};
\draw (356.67,34.67) node [scale=1.44]  {$\mathcal{G}_{2}$};
\draw (283.5,82.5) node [scale=1.2]  {$e_{1}$};
\draw (333.5,82) node [scale=1.2]  {$e_{2}$};
\draw (309.5,129.5) node [scale=1.2]  {$e_{3}$};
\draw (579,39) node [scale=1.44]  {$\mathcal{G}_{1} \cup \mathcal{G}_{2}$};
\draw (460.5,110.5) node [scale=1.2]  {$e_{1}$};
\draw (525,110) node [scale=1.2]  {$e_{2}$};
\draw (481,70) node [scale=1.2]  {$e_{3}$};
\end{tikzpicture}
\end{center}
\caption{If we reverse the orientation of just one edge, the resulting union is different.}
\label{UNIORI}
\end{figure}

Our main result in this section is the following lemma, which characterizes the null space of elliptic operators with natural vertex conditions  associated with the union graph with its connectedness. 

\begin{lemma}\label{lemmacaratt}
Given  $\mathcal{G}_1,\ldots,\mathcal{G}_N$
 metric graphs satisfying the Assumption \ref{a-metgraphs}, let $\cG$ be their union graph 
(see Definition \ref{defunionegrafimetrici}). Let $A_i$ be the elliptic operators associated with $\cG_i$ with natural vertex conditions operators and coefficients $p_i \in W^{1,\infty}(\cG_i)$.
Then
\begin{equation}
\label{characterization}
\mathcal{G} \ \text{is connected} \ \Longleftrightarrow \ \bigcap_{i=1}^{N} \ker A_i = \ \langle \mathbbm{1} \rangle.
\end{equation}
\end{lemma}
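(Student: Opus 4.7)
My plan is to reduce the lemma to Proposition \ref{propositionbasisnull space}, which describes $\ker A_i$ as the span of indicator functions of the connected components of $\mathcal G_i$. Equivalently, lifting back to $\mathcal E$, a function $f$ on $\mathcal G_i$ lies in $\ker A_i$ if and only if its lift is constant on each edge and takes equal values on any two endpoints in $\mathcal V$ that are identified by $\equiv_i$. The whole argument will pivot on the compatibility between the equivalence relations $\equiv_1,\ldots,\equiv_N$ and the union relation $\equiv_\cup$ from Definition \ref{defunionegrafimetrici}: since $\equiv_\cup$ is the reflexive--symmetric--transitive closure of $\bigcup_i\equiv_i$, each $\equiv_i$ refines $\equiv_\cup$, so each connected component of $\mathcal G_i$ is contained in exactly one connected component of $\mathcal G$.

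For the direction ``$\Rightarrow$'', I would take $f\in\bigcap_{i=1}^N \ker A_i$ and lift it to $\mathcal E$. By Proposition \ref{propositionbasisnull space} applied to each $i$, the lift is constant on every edge and respects every identification in $\equiv_i$; hence it respects all identifications in $\bigcup_i\equiv_i$, and by transitivity it respects all identifications in $\equiv_\cup$. Thus $f$ descends to a locally constant function on $\mathcal G$, and connectedness of $\mathcal G$ forces $f\in\langle\mathbbm 1\rangle$.

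For the direction ``$\Leftarrow$'', I would prove the contrapositive. Assuming $\mathcal G$ has at least two connected components $\mathcal G^{(1)},\ldots,\mathcal G^{(l)}$ with $l\ge 2$, I would exhibit the characteristic function $\mathbbm 1_{\mathcal G^{(1)}}$, which is plainly non-constant, as an element of $\bigcap_i\ker A_i$. The key observation is the one above: since every component of $\mathcal G_i$ sits inside a single component of $\mathcal G$, $\mathbbm 1_{\mathcal G^{(1)}}$ is either identically $0$ or identically $1$ on each component of each $\mathcal G_i$, and automatically takes equal values at any pair of points identified by $\equiv_i$. Hence Proposition \ref{propositionbasisnull space} yields $\mathbbm 1_{\mathcal G^{(1)}}\in\ker A_i$ for every $i$, contradicting $\bigcap_i\ker A_i=\langle\mathbbm 1\rangle$.

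The only real subtlety, as illustrated by Remark \ref{remremrem} and Figure \ref{UNIORI}, is the formal handling of the union metric graph: one must work at the level of the lifts to $\mathcal E$, and keep track of the fact that $\equiv_\cup$ is the equivalence closure of the union of relations (so that components in $\mathcal G$ can be strictly larger than components in any single $\mathcal G_i$). Once this bookkeeping is in place, the elliptic coefficients $p_i$ play no role — Proposition \ref{propositionbasisnull space} already encapsulates the effect of the natural vertex conditions, and the statement becomes a purely combinatorial assertion about equivalence relations on $\mathcal V$.
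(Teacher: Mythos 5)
Your proposal is correct and follows essentially the same route as the paper: both directions reduce to Proposition \ref{propositionbasisnull space} together with the observation that each connected component of $\mathcal G_i$ sits inside a single connected component of $\mathcal G$ (the paper phrases the forward direction via a chain of adjacent edges and the backward one via explicit index sets $J_A,J_B,K_A,K_B$, while you phrase both via the refinement $\equiv_i\,\subseteq\,\equiv_\cup$, which also lets you treat general $N$ directly instead of by induction). This is a clean streamlining of the same argument rather than a different proof.
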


Notice that this lemma is remarkably similar to Lemma \ref{Lemmaunione} (which was concerned with combinatorial graphs)
and also their proofs will be similar. 

\begin{proof}
We show the proof for $N=2$, then one can easily extend the result for an arbitrary $N$ by induction.
In general, both $\mathcal{G}_1$ and $\mathcal{G}_2$ have a certain number of disjoint connected components: 
$$\mathcal{G}_1^{(1)},\ldots,\mathcal{G}_1^{(m)} \ \text{ s.t. } \ \mathcal{G}_1=\bigsqcup_{j=1}^{m} \mathcal{G}_1^{(j)},\qquad \text{for some } m \in \mathbb{N}$$
and
$$\mathcal{G}_2^{(1)},\ldots,\mathcal{G}_2^{(n)} \ \text{ s.t. } \ \mathcal{G}_2=\bigsqcup_{k=1}^{n} \mathcal{G}_1^{(k)},\qquad \text{for some } n \in \mathbb{N}.$$
Since the connectedness is just a topological property, notice that the connected components remain the same for every choice of orientation.  

Now assume that $\mathcal{G}$ is connected: we need to show that  $ \ker A_1 \cap \ker A_2 \subseteq \langle \mathbbm{1} \rangle$. 
Thus, we take $f \in \ker A_1 \cap \ker A_2$, in particular from the results in 
Proposition \ref{propositionbasisnull space}
it is well-known that $f$ is constant on each connected component of both $\mathcal{G}_1$ and $\mathcal{G}_2$. 
Take $\xi = (x,\me_h)$ and $\theta = (y,\me_l)$ in $\mathcal{G}$ and without loss of generality we can assume that  $h \ne l$.
Hence, by connectedness of $\mathcal{G}$, there exists a 
chain of adjacent edges $\Gamma_{xy}=\{e_h,e_{i_1},\ldots,e_{i_M},e_l\}$ linking $\xi$ and $\theta$:
$$x \in \me_h \sim \me_{i_1} \sim \cdots \sim \me_{i_M} \sim \me_l \ni y.$$
In particular, edges in $\Gamma_{xy}$ can be incident in $\mathcal{G}_1$ and/or in $\mathcal{G}_2$. 
Thus, taking into account that $f$ is constant on the connected components of both graphs, we deduce that $f$ is constant along $\Gamma_{xy}$ and in particular
$$f(\xi)=f(\theta).$$
Because $\xi, \theta$ are arbitrary, we conclude that $f$ is constant.

In order to prove the opposite implication, we are going to show that if $\mathcal{G}$ is disconnected, then we can find a non constant function such that $f \in \ker A_1 \cap \ker A_2$. 
For simplicity, assume that $\mathcal{G}$ has only two connected components: $\mathcal{G}^{(A)}$ and $\mathcal{G}^{(B)}$. 
Then, both contain a certain number of connected components of $\mathcal{G}_1$ and $\mathcal{G}_2$. In particular, we set
$$J_A=\left\lbrace j\in \{1,\ldots,m\} : \mathcal{G}_1^{(j)} \subseteq \mathcal{G}^{(A)} \right\rbrace, \qquad J_B=\left\lbrace j\in \{1,\ldots,m\} : \mathcal{G}_1^{(j)} \subseteq \mathcal{G}^{(B)} \right\rbrace $$
and
$$K_A=\left\lbrace k\in \{1,\ldots,n\} : \mathcal{G}_2^{(k)} \subseteq \mathcal{G}^{(A)} \right\rbrace, \qquad K_B=\left\lbrace k\in \{1,\ldots,n\} : \mathcal{G}_2^{(k)} \subseteq \mathcal{G}^{(B)} \right\rbrace .$$
Due to the fact that $\mathcal{G}$ is disconnected, it follows that 
$$J_A \cap J_B = \emptyset, \qquad K_A \cap K_B =\emptyset,$$
in fact there cannot exist some connected components of $\mathcal{G}_1$ or $\mathcal{G}_2$ shared by $\mathcal{G}^{(A)}$ and $\mathcal{G}^{(B)}.$ 
This is always true, even if, roughly speaking, we reverse the endpoints of some edge in one of the initial graphs. 
Since the connected components of $\mathcal{G}_1$ and $\mathcal{G}_2$ are invariant under orientation, we will never find an orientation for which some index $j $ is in $J_A \cap J_B$ or some $k $ is in $K_A \cap K_B$. 
Hence, taking the characteristic functions on each connected component such that
$$\ker A_1 = \langle \{\mathbbm{1}_{1,j}\}_{j=1}^{m} \rangle \ \text{ and } \ \ker A_2 = \langle \{\mathbbm{1}_{2,k}\}_{k=1}^{n}\rangle,$$
it is always true that
\begin{equation}
\label{A}
\sum_{j \in J_A} \mathbbm{1}_{1,j} = \mathbbm{1}_A =\sum_{k \in K_A} \mathbbm{1}_{2,k}
\end{equation}
and
\begin{equation}
\label{B}
\sum_{j \in J_B} \mathbbm{1}_{1,j} = \mathbbm{1}_B =\sum_{k \in K_B} \mathbbm{1}_{2,k}.
\end{equation}
At this point, we only need to take some function of the form
$$f = \alpha \mathbbm{1}_A + \beta \mathbbm{1}_B, \qquad \alpha,\beta \in \mathbb{C}$$
and from \eqref{A} and \eqref{B} one gets that $f$ can be written as a linear combination of both bases of $\ker A_i$, $i=1,2$:
$$f= \alpha \sum_{j \in J_A} \mathbbm{1}_{1,j}  + \beta \sum_{j \in J_B} \mathbbm{1}_{1,j} \qquad \Longrightarrow \qquad f \in \ker A_1$$
and
$$f= \alpha \sum_{k \in K_A} \mathbbm{1}_{2,k}  + \beta \sum_{k \in K_B} \mathbbm{1}_{2,k} \qquad \Longrightarrow \qquad f \in \ker A_2.$$
Thus, the proof is complete.
\end{proof}

In the end, we can finally state the following characterization of the asymptotic behavior of the solutions to \eqref{CRE}
in terms of the connectedness of the union graph. The proof is, at this point, a direct consequence of Theorem \ref{t1} and Lemma \ref{lemmacaratt}.

\begin{theorem}\label{FinalTheorem}
Let $(Z(t))_{t\ge0}$ be a semi-Markov process and $\cC$ be a family of graphs that satisfy the Assumptions \ref{hpZ} and \ref{a-metgraphs}, respectively. Then the random propagator $(S(t))_{t\ge0}$ for the Cauchy problem \eqref{CRE} converges in norm $\bP$-almost surely towards the orthogonal projector $P_0$ onto the constants
if and only if the union graph $\cG$ is connected.
\end{theorem}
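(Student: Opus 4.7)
The plan is to deduce this characterization as a direct corollary of the abstract convergence result in Theorem \ref{t1} combined with the geometric kernel identification in Lemma \ref{lemmacaratt}. First I would verify that the ensemble $\{A_1,\ldots,A_N\}$ of elliptic operators with natural vertex conditions associated with the metric graphs $\mathcal G_1,\ldots,\mathcal G_N$ in $\mathcal C$ satisfies the Assumptions \ref{hpA} and \ref{hpc}: this is precisely the content of Proposition \ref{prop:lapl-mg-basic}, which asserts that each $A_i$ is densely defined, self-adjoint, negative semi-definite, and has compact resolvent, hence generates a contractive analytic semigroup whose only possible spectral value on $i\mathbb R$ is $0$. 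Combined with the probabilistic Assumption \ref{hpZ} on $(Z(t))_{t\ge 0}$, this places us in the setting of Theorem \ref{t1}, so that
\[
\|\cdot\|-\lim_{t \to \infty} S(t) = P_K \qquad \mathbb P\text{-a.s.},
\]
where $K := \bigcap_{i=1}^N \ker A_i$ and $P_K$ is the orthogonal projector onto it.

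Next I would invoke Lemma \ref{lemmacaratt}, which identifies when $K$ coincides with the one-dimensional space $\langle \mathbbm 1\rangle$ of constant functions on $\mathcal G$. If $\mathcal G$ is connected, the lemma yields $K = \langle \mathbbm 1\rangle$, hence $P_K = P_0$, and Theorem \ref{t1} gives $\mathbb P$-almost sure convergence of $(S(t))_{t\ge 0}$ to $P_0$ in operator norm. Conversely, if $\mathcal G$ is disconnected, the contrapositive of Lemma \ref{lemmacaratt} yields $K \supsetneq \langle \mathbbm 1\rangle$, so that $P_K \neq P_0$; since Theorem \ref{t1} pins down the almost sure limit of $(S(t))_{t\ge 0}$ uniquely as $P_K$, the propagator cannot converge in norm to the strictly smaller projector $P_0$ on any event of positive probability.

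The main obstacle in this reasoning has in fact already been overcome upstream: it is Lemma \ref{lemmacaratt}, the non-trivial identification of $\bigcap_{i=1}^N \ker A_i$ with $\langle \mathbbm 1\rangle$ exactly when the union metric graph is connected, which depends delicately on the formalism of Definition \ref{defunionegrafimetrici} and on the invariance of connected components under reorientation of edges (as noted in Remark \ref{remremrem}). Once that lemma and Theorem \ref{t1} are in place, the present statement reduces to straightforward bookkeeping, and no additional analytic or probabilistic estimates are required.
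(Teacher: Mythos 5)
Your proposal is correct and follows exactly the route the paper takes: the authors likewise obtain Theorem \ref{FinalTheorem} as a direct consequence of Theorem \ref{t1} (applicable by Proposition \ref{prop:lapl-mg-basic}) together with the kernel characterization of Lemma \ref{lemmacaratt}. Your additional remark that $\langle\mathbbm{1}\rangle\subseteq K$ always holds, so that disconnectedness forces $P_K\neq P_0$ and hence rules out convergence to $P_0$, is a correct and slightly more explicit rendering of the ``only if'' direction than the paper spells out.
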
	

\subsection{Randomly switching metric graphs with non-zero second eigenvalue}

As we have previously seen in the combinatorial setting, we are going to 
apply exponential convergence results in the framework of metric graphs in case one (every) graph in $\cC$ is connected.
This follows from an application of Theorem \ref{th-adapt-ADFK} since connectedness of a graph is equivalent to the second eigenvalue being non-zero.

\begin{assumption}\label{ass:metric}
$\cG_1$ is a connected graph.
\end{assumption}

This immediately implies that the union graph $\cG$ is connected, too, and $P_K=P_0$ is the projector onto the constant functions on $\mE$. 
Then, Assumption \ref{a-K} is verified since $\lambda_{2}(A_1)<0$, owing to connectedness of $\cG_1$. 
The setting described here is somehow comparable to the diffusion equation presented in \cite{AreDieKra14}, in the case when their semilinear term is set equal to zero. 
The dependence on time of that model is different from the non-autonomous random evolution problem \eqref{CRE}: while diffusion and conductivity coefficients are in~\cite{AreDieKra14} allowed to vary over time (in a measurable fashion), yielding an operator family $(A(t))_{t\ge 0}$, the evolution is studied on one fixed graph. 
However -- much like in our setting -- the crucial point in~\cite{AreDieKra14} is  that the time average
of the spectral gap of $(A(t))_{t\ge0}$ is bounded above away from zero. 
In their case, this is enforced by assuming that the graph is connected and allows the authors of \cite{AreDieKra14} to prove exponential convergence to equilibrium.
In our setting, the counterpart of~\cite[Thm.~5.4]{AreDieKra14} reads as follows.

\begin{corollary}\label{cor-K-metgraphs}
Let $(Z(t))_{t\ge 0}$ be a semi-Markov process and $\cC$ be a family of graphs that satisfy the Assumptions \ref{hpZ} and \ref{hp3.1}, respectively. Let additionally the Assumption~\ref{ass:metric} hold. 

Then the random propagator $(S(t))_{t \ge 0}$ for the Cauchy problem \eqref{CRE} converges in norm $\bP$-almost surely exponentially fast towards the orthogonal projector $P_0$ with an exponential rate no lower than
\begin{align*}
\alpha = -\sum_{j=1}^N \lambda_2(A_j) \Theta_j
\end{align*} 
that is the average of the eigenvalues  $\lambda_{2}(A_j)$ with respect to the fraction of time $\Theta_j$ spent by the process $Z(t)$ in the various states.
\end{corollary}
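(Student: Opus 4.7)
The plan is to reduce the claim to an application of Theorem \ref{th-adapt-ADFK}, and then to sharpen the resulting exponential rate exactly as was done in the combinatorial setting in Corollary \ref{cor-K-combgraphs}. The first step is to verify Assumption \ref{a-K} for the ensemble $\{A_1,\dots,A_N\}$. Because any path inside $\cG_1$ also defines a path inside the union graph $\cG$, the connectedness of $\cG_1$ forces $\cG$ to be connected as well; Lemma \ref{lemmacaratt} then yields $K = \bigcap_{i=1}^{N} \ker A_i = \langle \mathbbm{1}\rangle$. On the other hand, Proposition \ref{propositionbasisnull space} applied to the connected graph $\cG_1$ gives $\ker A_1 = \langle \mathbbm{1}\rangle$. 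Hence $\ker A_1 = K$ and Assumption \ref{a-K} is satisfied, so Theorem \ref{th-adapt-ADFK} already delivers almost sure norm convergence $S(t)\to P_K = P_0$ at some exponential rate bounded below by $(-s_d(A_1))\Theta_1 > 0$.

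To upgrade the rate to the announced $\alpha = -\sum_{j=1}^N \lambda_2(A_j)\Theta_j$, I would revisit the energy estimate driving the proof of Theorem \ref{th-adapt-ADFK}, but retain \emph{all} summands instead of keeping only the $j=1$ term by crude dissipativity. Starting from
\[
\|P_K^\perp u(t)\|^2 - \|P_K^\perp f\|^2 \;=\; 2\sum_{j=1}^N \int_0^t \mathbbm{1}_{(Z(s)=j)}\,\Re\bigl(A_j P_K^\perp u(s),\,P_K^\perp u(s)\bigr)\,ds,
\]
I would bound each integrand by $\lambda_2(A_j)\|P_K^\perp u(s)\|^2$. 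When $\cG_j$ is connected, $\ker A_j = \langle \mathbbm{1}\rangle = K$, so $P_K^\perp u(s)\in (\ker A_j)^\perp$ and self-adjointness of $A_j$ supplies the sharp bound with $\lambda_2(A_j)<0$. When $\cG_j$ is disconnected, $0$ is a multiple eigenvalue of $A_j$ so that $\lambda_2(A_j)=0$, and the bound reduces to the trivial dissipative estimate $\Re(A_jv,v)\le 0$. Summing and applying Gronwall's lemma produces
\[
\|P_K^\perp u(t)\|^2 \;\le\; \|P_K^\perp f\|^2\,\exp\!\left(2\int_0^t \lambda_2(A_{Z(s)})\,ds\right).
\]

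The conclusion follows from the ergodic identity in Remark \ref{rem:itfllow}:
\[
\frac{1}{t}\int_0^t \lambda_2(A_{Z(s)})\,ds \;=\; \sum_{j=1}^N \lambda_2(A_j)\,\frac{1}{t}\int_0^t \mathbbm{1}_{(Z(s)=j)}\,ds \ \xrightarrow[t\to\infty]{\bP\text{-a.s.}}\ \sum_{j=1}^N \lambda_2(A_j)\Theta_j,
\]
which yields the exponential rate $\alpha$ stated in the corollary. The only delicate point I anticipate is handling the disconnected realizations cleanly: for such $j$ one decomposes $P_K^\perp u(s) = v_j + w_j$ with $v_j\in \ker A_j\ominus K$ and $w_j\in(\ker A_j)^\perp$, observes via self-adjointness that $(A_j P_K^\perp u(s), P_K^\perp u(s)) = (A_j w_j, w_j)\le 0$, and notes that this matches $\lambda_2(A_j)\|P_K^\perp u(s)\|^2$ because $\lambda_2(A_j)=0$. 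All other steps are a direct transcription of the combinatorial argument in Corollary \ref{cor-K-combgraphs}.
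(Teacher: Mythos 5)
Your proposal is correct and follows essentially the same route as the paper: the combinatorial analogue (Corollary \ref{cor-K-combgraphs}) is proved by invoking Theorem \ref{th-adapt-ADFK} and identifying the rate with the time average $-\frac1t\int_0^t\lambda_2(A_{Z(s)})\,ds\to\alpha$, and the metric case is left to the identical argument. Your only addition is to spell out the refinement of the energy/Gronwall estimate that retains all $N$ summands (bounding the $j$-th term by $\lambda_2(A_j)\|P_K^\perp u(s)\|^2$, trivially valid with $\lambda_2(A_j)=0$ for disconnected $\cG_j$), which is exactly what is needed to upgrade the rate from the $(-s_d(A_1))\Theta_1$ literally stated in Theorem \ref{th-adapt-ADFK} to the full weighted average $\alpha$; this is a correct and welcome elaboration of a step the paper only sketches.
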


\begin{remark}\label{rem:ultimo-metrico}
As in the case of combinatorial graphs discussed in Remark~\ref{rem:unioninters}, we can find in the literature some estimate on the best possible value of the 
parameter $\alpha$.
In this case, we refer e.g.\ to the estimates in \cite[Th\'eo.~3.1]{Nic87}, \cite[Thm.~1]{Fri05}, and~\cite[Thm.~4.2]{KenKurMal16}: 
for a generic connected metric graph $\mathcal G$
\[
-\frac{\pi^2 |\mE|^2}{L^2}\le \lambda_2\le -\frac{\pi^2}{L^2},
\]
where the second inequality is an equality if and only if $\mathcal G$ consists of an interval;
here $|\mE|$ is the number of edges and $L$ is the total length of the graph (the sum of the lengths of the edges).
Therefore, 
the parameter $\alpha$, that is the weighted average of
$-\lambda_2(A_i)$ as $\cG_i$ varies in
 $\cC$, 
 is no lower than $\displaystyle \frac{\pi^2}{L^2}$ 
(as long as the intersection graph $\mathcal G_\cap$ of all graphs in $\cC$ is connected) and no higher than $\displaystyle \frac{\pi^2 |\mE|^2}{L^2}$.
\end{remark}

\end{document}